\numberwithin{equation}{section}
\newtheorem{thm}{Theorem}[section]
\newcommand{\bt}{\begin{thm}}
\newcommand{\et}{\end{thm}}
\newtheorem{cor}[thm]{Corollary}   
\newcommand{\bc}{\begin{cor}}
\newcommand{\ec}{\end{cor}}
\newtheorem{lem}[thm]{Lemma}   
\newcommand{\bl}{\begin{lem}}
\newcommand{\el}{\end{lem}}
\newtheorem{prop}[thm]{Proposition}
\newcommand{\bp}{\begin{prop}}
\newcommand{\ep}{\end{prop}}
\newtheorem{defn}[thm]{Definition}
\newcommand{\bd}{\begin{defn}}    
\newcommand{\ed}{\end{defn}}
\newtheorem{rmrk}[thm]{Remark}   
\newcommand{\br}{\begin{rmrk}}
\newcommand{\er}{\end{rmrk}}
\newcommand{\be}{\begin{equation}}
\newcommand{\ee}{\end{equation}}
\newcommand{\N}{\mathbb{N}}
\newcommand{\R}{\mathbb{R}}
\newcommand{\Z}{\mathbb{Z}}
\newcommand{\g}{\overline{g}}
\newcommand{\e}{\overline{e}}
\newcommand{\on}{\overline{\nabla}}
\newcommand{\pr}{\partial_r}
\newcommand{\D}{\overline{D}}
\newcommand{\vol}{{\rm vol}}
\newcommand{\Scal}{{\rm Scal}}
\newcommand{\tu}{\tilde{u}}
\newcommand{\tv}{\tilde{v}}
\begin{document}

\title[PMT for AF manifolds with isolated conical singularities]{Positive mass theorem for asymptotically flat spin manifolds with isolated conical singularities}

\author{Xianzhe Dai}
\address{
Department of Mathematics, 
University of Californai, Santa Barbara
CA93106, USA}
\email{dai@math.ucsb.edu}

\author{Yukai Sun}
\address{Key Laboratory of Pure and Applied Mathematics, 
School of Mathematical Sciences, Peking University, Beijing, 100871, P. R. China
}
\email{sunyukai@math.pku.edu.cn}

\author{Changliang Wang}
\address{
School of Mathematical Sciences and Institute for Advanced Study, Tongji University, Shanghai 200092, China}
\email{wangchl@tongji.edu.cn}

\date{}

\keywords{Positive mass theorem, Conical singularity, Dirac operator,  Weighted Sobolev spaces }

\begin{abstract}
There has been a lot of interests in Positive Mass Theorems for singular metrics on smooth manifolds. We prove a positive mass theorem for asymptotically flat (AF) spin manifolds with isolated conical singularities or more generally horn singularities.
In particular, we allow topological singularities in the space as we do not require the cross sections of the conical singularity to be spherical. Note that the negative mass Schwarzschild metric is AF with a horn singularity.
\end{abstract}

\maketitle

\tableofcontents
\section{Introduction}

The famous Positive Mass Theorem \cite{SY-PMT, Witten-PET} states that an asymptotically flat (AF for short; also called asymptotically Euclidean) manifold with nonnegative scalar curvature must have nonnegative ADM mass (if the dimension of the manifold is between $3$ and $7$ or if the manifold is spin; for the recent progress about the higher dimensional non-spin manifolds, see \cite{SY1}). Furthermore, the mass is zero iff the manifold is the Euclidean space. The remarkable applications
of this seminal result include the Geroch conjecture (and various scalar curvature rigidity results), Schoen's final resolution of the Yamabe conjecture, Bray's proof of the Penrose inequality, and the so-called Black Hole uniqueness results.

There has been a lot of interests in Positive Mass Theorems for singular metrics on smooth manifolds (also referred as Positive Mass Theorems with low regularity). Part of the motivations come from attempts in removing the spin restriction in the higher dimensions and the study of stability aspect of Positive Mass Theorems. 
Another source of motivation comes from Gromov's polyhedral comparison theory for positive scalar curvature \cite{Gromov14} and investigating weak notions of nonnegative scalar curvature. These are metric singularities occurring on smooth manifolds, and the metric is usually assumed to be continuous, see e.g. \cite{Grant-Tassotti, JSZ2022, Lee-PAMS-2013, Lee-LeFloch, Li-Mantoulidis, Miao2002,Shi-Tam-PJM-2018} and others. In \cite{Grant-Tassotti}, Grant-Tassotti proved a positive mass theorem for metrics with $W^{2, \frac{n}{2}}_{loc}$ regularity for dimension $n\leq 7$ or spin manifold of any dimensions. In \cite{Lee-PAMS-2013}, Lee proved a positive mass theorem for $W^{1, p}_{loc}$-metrics ($n<p\leq \infty$) whose singular sets have zero $\frac{n}{2}\left( 1- \frac{n}{p} \right)$-dimensional lower Minkowski content, for $n\leq 7$ or spin manifolds of any dimension. In \cite{Shi-Tam-PJM-2018}, Shi and Tam proved a positive mass theorem for $W^{1, p}_{loc}$-metrics ($n<p \leq \infty$) whose singular sets have codimensions at least $2$, for $n\leq 7$.  In \cite{Lee-LeFloch}, Lee and LeFloch proved a positive mass theorem for spin manifolds with $W^{1, n}_{loc}$-metrics, without imposing any constraint on the size or dimension of the singular sets of the metrics. In the non-spin case, Jiang, Sheng and Zhang \cite{JSZ2022} proved a positive mass theorem for $W^{1, p}_{loc}$-metrics $(n \leq p \leq \infty)$ whose singular sets $\Sigma$ have finite Hausdorff measure $\mathcal{H}^{n-\frac{p}{p-1}}(\Sigma)<\infty$ if $n\leq p<\infty$ or $\mathcal{H}^{n-1}(\Sigma)=0$ if $p=\infty$.

In this paper we study the Positive Mass Theorems in the presence of isolated conical or more generally horn singularities.
In particular, we allow topological singularities in the space as we do not require the cross sections of the conical singularity to be spherical. Even if the cross sections are diffeomorphic to sphere and hence the manifolds are smooth, our asymptotically conical metrics (as in Definition \ref{defn-conic-mfld}) will still not satisfy the regularity assumptions of the above works on the metrics, if the metric on cross section is not the standard spherical metric. 

The motivation of our work is to understand the extent singularity may affect positive mass theorems, which is connected with the Schoen conjecture about co-dimension $3$ singularity. The conjecture says that on a closed manifold with nonpositive Yamabe constant (aka $\sigma$-constant or Schoen constant), a continuous uniformly Euclidean metric with co-dimension $3$ (or higher) singularity and nonnegative scalar curvature on the smooth part must already be smooth, hence Ricci flat \cite{Li-Mantoulidis}. Here uniformly Euclidean metrics are those which are quasi-isometric to smooth metrics. On a smooth manifold of dimension $3$ or higher, a smooth metric with only conical singularities are uniformly Euclidean. The Schoen-Yau-Lohkamp compactification scheme turns an AF manifold into a closed manifold with nonpositive Yamabe constant. In this scheme, a negative mass metric with singularity and nonnegative scalar curvature on the AF manifold should correspond to a metric with singularity and nonnegative scalar curvature on the compactification. Thus, if Schoen conjecture is true, one would expect a positive mass theorem for continuous metrics with conical singularity. Of course, we show that this is the case even when there is topological singularity.

Conical singularities occur naturally in the study of the near horizon geometry of black holes. Motivated from this consideration, a positive mass theorem for conical singularity under the nonnegative Ricci curvature is established in \cite{Lucietti-21} (modulo some analytic details). 

The isolated conical singularities and horn singularities are included in the class of the so-called zero area singularities studied in \cite{Bray-Jauregui-AJM-2013}. Assuming a conformal conjecture (see Conjecture 34 in \cite{Bray-Jauregui-AJM-2013}) to be true, Bray and Jauregui \cite{Bray-Jauregui-AJM-2013} obtained a lower bound for the ADM mass at infinity in term of the mass of singularities that they defined. In particular, if their conformal conjecture holds and the mass of singularities is nonnegative, then a positive mass theorem would follow. However, to our knowledge, Conjecture 34 in \cite{Bray-Jauregui-AJM-2013} is still open. And the mass of (even) conical singularities does not seem easy to estimate.

In the next section we will define precisely what we mean by asymptotically flat (AF) manifolds with (finitely many) isolated conical singularities. Our main result is

\begin{thm}
Let $(M^n, g)$ be a AF spin manifold with finitely many isolated conical singularity and $n\geq 3$. If the scalar curvature is nonnegative on the smooth part, then the mass $m(g)$ is nonnegative. Furthermore, the mass $m(g)=0$ if and only if $(M^{n},g)\simeq (\mathbb{R}^{n},g_{\mathbb{R}^{n}})$, except when $n=4k$, $k\in \mathbb{Z}_{+}$, we require additionally that the cross sections of the model cones of the conical singularities are simply connected.
\end{thm}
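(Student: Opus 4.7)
The plan is to adapt Witten's spinor proof of the positive mass theorem to the conically singular setting. Recall the smooth case: one finds a harmonic spinor $\psi$ asymptotic to a parallel spinor $\psi_0$ at infinity, applies the Lichnerowicz formula $D^{2} = \nabla^{*}\nabla + \tfrac{1}{4}\Scal$, and integrates over a large ball; the boundary term at infinity yields a positive multiple of $m(g)|\psi_0|^{2}$, while $|\nabla\psi|^{2} + \tfrac{1}{4}\Scal|\psi|^{2} \geq 0$ in the bulk. In our setting one works instead on the truncated region $\{r_i \geq \varepsilon\} \cap \{|x| \leq R\}$ (with $r_i$ denoting the radial distance to the $i$-th conical point), and the additional task is to arrange that no extra contribution appears on the spheres shrinking to the conical tips.

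First I would set up weighted Sobolev spaces $H^{k}_{\delta}(M)$ combining a conical weight $r_i^{\delta}$ near each singular point with the usual AF weight at infinity. Near a model cone $C(N_i)$ with metric $dr^{2} + r^{2} h_{N_i}$, the Dirac operator takes a separated form schematically
\be
D = r^{-1}\left(r\partial_r + \tfrac{n-1}{2} + A_{N_i}\right),
\ee
with $A_{N_i}$ a Dirac-type operator on $(N_i,h_{N_i})$; its spectrum determines the indicial roots, hence the discrete set of critical weights. I would then establish Fredholm properties of $D$ between weighted spaces for non-critical weights by assembling the classical AF Fredholm theory with the standard conical parametrix construction. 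The crucial point is to choose a weight $\delta$ that simultaneously avoids every indicial root at every cone and guarantees enough decay of the selected harmonic spinor at each tip to kill the boundary terms produced by integration by parts.

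With the analytic framework in place, I would solve $D\psi = 0$ with $\psi - \psi_0 \in H^{1}_{\delta}$ on the AF end and $\psi \in H^{1}_{\delta'}$ near each tip. Existence follows from Fredholm theory once the $L^{2}$-kernel of $D$ is ruled out via the Lichnerowicz formula applied to a cutoff of any putative kernel element. Witten's identity on the truncated region, followed by $R \to \infty$ and $\varepsilon \to 0$, then yields
\be
c_n\, m(g)\,|\psi_0|^{2} = \int_{M}\left(|\nabla\psi|^{2} + \tfrac{1}{4}\Scal|\psi|^{2}\right)\,d\vol + \lim_{\varepsilon\to 0}\,(\text{tip boundary terms}),
\ee
for a positive dimensional constant $c_n$, and the weight choice forces the tip contribution to vanish, giving $m(g) \geq 0$. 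Controlling this limit at each conical tip is the main obstacle of the argument: rigorously it combines the eigenspinor expansion of $\psi$ along $A_{N_i}$, an Euler-type ODE analysis for the radial coefficients, and an estimate showing that $\int_{\{r_i = \varepsilon\}}\langle \nabla_\nu\psi, \psi\rangle \to 0$ in the admissible weight window; verifying that this window is nonempty is precisely the spectral condition on the cross-sections that one must check.

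For the rigidity statement, $m(g) = 0$ together with the Witten identity forces $\nabla\psi \equiv 0$ on the smooth locus and $\Scal\cdot|\psi|^{2} \equiv 0$ for every choice of asymptotic parallel $\psi_0$. A full basis of such parallel spinors trivializes the spinor bundle, so $g$ is Ricci-flat with trivial holonomy on the smooth locus, and the AF condition upgrades this to an isometry with $(\R^{n}, g_{\R^{n}})$ off the singular set; the conical tips must then in fact be smooth points of Euclidean space. The caveat in dimension $n=4k$ reflects the fact that parallel half-spinors are sensitive to the holonomy on the cross-sections $N_i$: simple connectedness of $N_i$ is what lets one extend the parallel spinors across the tip in a single-valued way, which is precisely what is needed to close the trivialization argument.
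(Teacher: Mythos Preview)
Your outline for the nonnegativity part follows the paper's strategy closely and is essentially correct, though you leave implicit the key spectral input: the admissible weight window at the tip is nonempty precisely because $\Scal_g\ge 0$ forces $\Scal_{g^N}\ge (n-1)(n-2)$, and then Friedrich's eigenvalue estimate gives $|\lambda_j|\ge\tfrac{n-1}{2}$ for the Dirac spectrum on the cross-section. This is what guarantees $\delta_0\ge 0$ and allows a weight $\delta>-\tfrac12$, so that the tip boundary term $\int_{\{r=\varepsilon\}}\langle\nabla_\nu\psi,\psi\rangle$ scales like $\varepsilon^{2\delta+n-2}\to 0$. You allude to this (``the spectral condition on the cross-sections that one must check'') but do not identify the mechanism.

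The rigidity argument, however, has a genuine gap, and your explanation of the $n=4k$ exception is incorrect. From a full basis of parallel spinors you get flatness on the smooth locus, but you cannot conclude that ``the conical tips must then in fact be smooth points of Euclidean space'' without first showing that each cross-section $(N,g^N)$ is the round sphere. A flat cone $C(N)$ with $N\neq\mathbb{S}^{n-1}$ is still a cone. The paper's route is quite different: one proves a partial asymptotic expansion $\psi = r^{\delta_0}\phi + o(r^{\delta_0})$ at the tip, argues that a nonzero parallel spinor cannot vanish there, hence $\delta_0=0$, which by Friedrich's rigidity forces the leading coefficient $\phi$ to be a \emph{Killing spinor} on $(N,g^N)$. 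Running this for a full basis of $\psi_0$'s produces $2^{[n/2]}$ independent Killing spinors on $N^{n-1}$, and then B\"ar's classification of manifolds with the maximal number of Killing spinors forces $N\cong\mathbb{S}^{n-1}$. The simply connected hypothesis when $n=4k$ is needed only because in cross-section dimension $4k-1$ B\"ar's theorem also admits $\mathbb{RP}^{4k-1}$; it has nothing to do with single-valuedness of parallel spinors or holonomy obstructions to extension, as you suggest. Once $N$ is known to be the round sphere the tip is a smooth point with a continuous $W^{1,n}$ metric, and one finishes by invoking the low-regularity rigidity results of Lee--LeFloch or Jiang--Sheng--Zhang.
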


This result is proved in Theorems \ref{thm-PMT}. Our proof also works for multiple AF ends (or even some ALE ends but the spin condition is crucial). In the exceptional dimensions $n=4k$, $k\in \mathbb{Z}_{+}$, the rigidity result still holds if the fundamental groups of the cross sections of the conical singularities are not $\mathbb Z_2$. This is related to B\"ar's classification \cite{Bar-JMSP-1996} of the spaces with maximum number of Killing spinors, as discussed below. 

In 3-dimension, by estimating capacity and Willmore functional of the cross sections of a cone sufficiently close to the cone tip, Theorem 7.4 in \cite{Miao2023} implies the nonnegativity of the mass, provided that the relative homology group $H_2(M\setminus B_{r_0}(o), \partial B_{r_0}(o)) = 0$, where $r_0$ is a sufficiently small positive number and $B_{r_0}(o)$ is the ball centered at conically singular point $o$ with radius $r_0$.
In \cite{TV2023}, Ju and Viaclovsky proved a positive mass theorem for AF manifolds with isolated orbifold singularities. 
In \cite{Li-Mantoulidis}, Li and Mantoulidis proved a positive mass theorem for metrics with conical singlarities along a codimension two submanifold (aka edge metrics) on smooth AF manifolds. In \cite{Shi-Tam-PJM-2018}, Shi-Tam studied some examples of AF manifolds with conical and horn singularities whose cross sections are constant scaling of standard round spheres, and calculated their ADM mass at infinity.

The negative mass Schwarzschild metric $g=(1-\frac{2m}{r})^4 g_{\mathbb R^3}, m>0$ defined on $r>2m$ in ${\mathbb R^3}$ is scalar flat and has ADM mass $-m$. It was observed in \cite[Section 4.2]{Bray-Jauregui-AJM-2013} (see also Proposition 2.3 in \cite{Shi-Tam-PJM-2018}) that near $r=2m$, 
$$ g=d\sigma^2 + c\sigma^{4/3} (1+O(\sigma^{2/3}))h_0,
$$
where $h_0$ is the standard metric on $\mathbb{S}^2$. We call such type of singularity as horn singularity. 
Thus the negative mass Schwarzschild metric is AF with a $r^{2/3}$-horn singularity, scalar flat but has negative mass.

On the other hand, we note, as a consequence of Herzlich's Positive Mass Theorem for AF manifolds with boundary, 

\begin{thm}\label{thm-PMT-horns-intro}
Let $(M^n, g)$, $n\geq 3$, be an AF spin manifold with finitely many $r^b$-horn singularities $($$b$ may be different for each horn$)$. Assume $b>1$ for each horn, and the Yamabe invariant of the cross section of 
each singularity is strictly positive. If the scalar curvature of $g$ is nonnegative, then the mass is strictly positive.
\end{thm}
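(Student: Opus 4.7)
The plan is to exhaust $(M,g)$ by AF spin manifolds with compact inner boundary, obtained by truncating each horn at a small parameter $\sigma_0>0$, and then apply Herzlich's positive mass theorem for AF spin manifolds with boundary to each such truncation. Concretely, pick small $\sigma_0>0$ and delete $\{\sigma<\sigma_0\}$ from every horn; this produces an AF spin manifold $M_{\sigma_0}$ with the same asymptotic end as $M$ (hence the same ADM mass) and compact boundary $\partial M_{\sigma_0}=\bigsqcup_i \Sigma_{i,\sigma_0}$, where the $i$-th component is a cross-section of the $i$-th horn at $\sigma=\sigma_0$, carrying induced metric asymptotic to $c_i\sigma_0^{2b_i}h_{0,i}$ as $\sigma_0\to 0^+$.

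The two pieces of boundary data needed by Herzlich are the mean curvature and the first eigenvalue of the intrinsic Dirac operator on each component. With outward unit normal $-\partial_\sigma$ (pointing into the deleted horn), a direct computation from the $r^b$-horn ansatz gives
\[
H_{i,\sigma_0} \,=\, -\tfrac{(n-1)b_i}{\sigma_0}\bigl(1+o(1)\bigr), \qquad \sigma_0\to 0^+,
\]
so $|H_{i,\sigma_0}|$ diverges only like $\sigma_0^{-1}$. On the intrinsic side, strict positivity of the Yamabe invariant of $(N_i,h_{0,i})$ combined with Hijazi's conformal eigenvalue inequality yields $|\lambda_1(D_{h_{0,i}})|>0$. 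Since the induced metric on $\Sigma_{i,\sigma_0}$ is, up to lower-order terms, the constant rescaling of $h_{0,i}$ by the factor $c_i^{1/2}\sigma_0^{b_i}$, and Dirac eigenvalues scale as the reciprocal of length, we obtain
\[
|\lambda_1(D_{\Sigma_{i,\sigma_0}})| \,=\, C_i\,\sigma_0^{-b_i}\bigl(1+o(1)\bigr), \qquad C_i>0.
\]

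Herzlich's theorem asserts that on a nonnegative-scalar-curvature AF spin manifold whose compact boundary satisfies the spectral condition $|\lambda_1(D_\Sigma)|\ge \tfrac12|H|$ on every component, the ADM mass is nonnegative, and a strict spectral gap upgrades nonnegativity to strict positivity. Because every $b_i>1$, the rate $\sigma_0^{-b_i}$ strictly dominates $\sigma_0^{-1}$, so for all $\sigma_0$ sufficiently small the spectral condition is met with a uniform strict gap on every boundary component. Applying Herzlich's theorem on $M_{\sigma_0}$ then yields $m(g)>0$.

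The main technical obstacle is the Dirac spectral asymptotics on $\Sigma_{i,\sigma_0}$: one has to verify that the sub-leading terms in the horn expansion perturb $\lambda_1(D_{\Sigma_{i,\sigma_0}})$ only to lower order in $\sigma_0$, preserving the dominant rate $\sigma_0^{-b_i}$. This is a continuity-of-spectrum argument for Dirac operators under a metric perturbation which vanishes as $\sigma_0\to 0$. Once that is in place, the proof reduces to the power-law comparison $\sigma_0^{-b_i}$ versus $\sigma_0^{-1}$, which is precisely where the hypothesis $b>1$ is crucial---consistent with the fact that the negative-mass Schwarzschild example has $b=2/3<1$ and therefore falls outside the reach of this argument.
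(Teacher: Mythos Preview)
Your argument is correct and follows the same strategy as the paper: truncate each horn at a small radius and apply Herzlich's positive mass theorem for AF spin manifolds with compact inner boundary, the point being that the boundary quantity controlling the Dirac/Yamabe data scales like $r_0^{-b}$ while the mean curvature scales only like $r_0^{-1}$, so $b>1$ wins for small $r_0$.

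The only difference is bookkeeping in Herzlich's boundary hypothesis. The paper invokes the Yamabe-invariant formulation from \cite{Herzlich-02}, checking directly that
\[
H_{N_{r_0}} \le \bigl(\mathrm{Area}(N_{r_0})\bigr)^{-1/(n-1)}\sqrt{\tfrac{n-1}{n-2}\,Y(N)},
\]
and then deduces \emph{strict} positivity from the rigidity clause (zero mass forces flatness, impossible for a $b>1$ horn). You instead use the Dirac-spectral formulation $|\lambda_1(D_\Sigma)|\ge \tfrac12 H$ and extract strict positivity from the strict spectral gap. These two formulations are equivalent via Hijazi's conformal inequality, so the arguments are the same once unwound; the paper's version is slightly more direct since the hypothesis is already stated in terms of $Y(N)$. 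One caveat: Hijazi's inequality needs $\dim N\ge 3$, so for $n=3$ your step ``Hijazi $\Rightarrow |\lambda_1|>0$'' must be replaced by the observation that positive Yamabe invariant on a closed surface forces $N\cong S^2$ (Gauss--Bonnet), after which B\"ar's inequality $\lambda_1^2\ge 4\pi/\mathrm{Area}$ gives the required spectral bound---exactly what the paper does in its separate $3$-dimensional treatment.
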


\begin{rmrk}
{\rm
   In Theorem \ref{thm-PMT-horns-intro}, the positive Yamabe invariant assumption for the cross section is used to derive the assumption $(\ref{eqn-horn-Herzlich-condition})$ below in Herzlich's result. But because nonnegativity of scalar curvature of an exact horn metric implies the positivity of the Yamabe invariant of the cross section, if the metric near the singularity is an exact horn metric, then we obtain a positive mass theorem without the positive Yamabe invariant assumption, see Corollary \ref{cor-PMT-exact-horn}. 
}
\end{rmrk}

Our general approach follows that of Witten. Thus we must analyze the Dirac operator on AF manifolds with conical singularity, solve the Dirac equation,  and study the asymptotic behaviors of its solutions. To deal with the conical singularity, we introduce weighted Sobolev spaces and study the mapping properties of the Dirac operator, especially its Fredholm property. Our treatment is adapted from that of \cite{Minerbe-CMP}, where the case of Euclidean space is dealt with. In fact our weighted Sobolev spaces come with two weights, one for the conical singularity and the other for the spatial infinity. When the weights are not critical, i.e., not equal to the indicial roots, we establish a refined elliptic estimate, which is crucial for proving the Fredholm property. With the Fredholm property, the surjectivity of the Dirac operator as a map between weighted Sobolev spaces can be obtained from the nonnegative scalar curvature assumption and the Lichnerowicz formula, provided the weights are constrained. One novelty here is that the weight constraints can actually be improved, which turns out to be critical in solving the Dirac equation to obtain harmonic spinors with certain asymptotic control near the conical point and asymptotic to constant spinors at infinity. This is essential in the proof of the positive mass theorem.

Conical singularity presents new difficulty in proving the rigidity case of the positive mass theorem as well. As usual zero mass leads to non trivial parallel spinors, and hence the space must be Ricci flat. However, the presence of singularity prevents the easy application of the rigidity case of the classical relative volume comparison (one may still be able to treat it as an RCD space though there are subtleties). To overcome this, we prove a partial asymptotic expansion for parallel spinors at the conical singularity, with the leading order term having nonnegative vanishing order determined by the eigenvalues of the Dirac operator on the cross section of the model cone. Because a non-trivial parallel spinor cannot have a positive vanishing order, the vanishing order must be zero. Then the rigidity result of Friedrichs \cite{Friedrich-80} let us to conclude that the leading term of the partial expansion is in fact a Killing spinor. Finally we make use of a result of B\"ar \cite{Bar-JMSP-1996} characterizing the spaces having maximal number of Killing spinors. Note that in certain dimensions one has the real projective spaces as well as the standard spheres, which is the reason for our additional assumption in those dimensions. 


The paper is organized as follows. In Section \ref{sec-conic}, we introduce asymptotically flat (AF) manifolds with isolated conical singularities and their ADM mass. Section \ref{sec-basic} collects some basic facts on cones, including the Dirac operators on the cones. Section \ref{sec-anal} is devoted to the analysis on AF manifolds with isolated conical singularities. We introduce the weighted Sobolev spaces and discuss the corresponding elliptic estimates for the Dirac operators, their Fredholm properties, and surjectivity. This is used in Section \ref{sec-solv} to obtain asymptotically constant harmonic spinors with also asymptotic control at the conical singularities. In Section \ref{sec-comp} we prove the positive mass theorem for conical singularity (Theorem 1.1). Finally Section \ref{horn} treats the horn singularity.

{\em Acknowledgement}: The first author would like to acknowledge several very interesting conversations with James Lucietti and Hari Kunduri while attending the Banff workshop in April, 2023. The authors thank Pengzi Miao for informing us that his result \cite{Miao2023} implies nonnegativity of the mass in 3-dimension and for useful discussions. We also thank Jeff Viaclovsky for bring their work \cite{TV2023} to our attention and for valuable comments.

\section{AF manifolds with isolated conical singularities} \label{sec-conic}
In this section, we give the precise definition of what we call asymptotically flat manifolds with isolated conical singularities, and the definition of the mass (at infinity) for these singular manifolds.
\begin{defn}\label{defn-conic-mfld}
{\rm
We say $(M^n_0, g, d, o)$ is a compact Riemannian manifold with smooth boundary and a single conical singularity at $o \in M_0 \setminus \partial M_0$, if 
\begin{enumerate}[(\romannumeral1)]
\item $d$ is a metric on $M_0$ and $(M_0, d)$ is a compact metric space with smooth boundary,
\item $g$ is a smooth Riemannian metric on the regular part $ M_0 \setminus \{ o \}$, 
         $d$ is the induced metric by the Riemannian metric $g$ on $M_0 \setminus \{ o \}$ ,
\item there exists a neighborhood $U_o$ of $o$ in $M \setminus \partial M$, such that 
         $U_o \setminus \{ o \} \simeq (0, 1) \times N$ 
          for a smooth compact manifold $N$, 
           and on $U_o \setminus \{ o \}$ the metric $g = \g + h $, where
         \[
         \g = dr^2 + r^2 g^N,
         \]
         $g^N$ is a smooth Riemannian metric on $N$, $r$ is a coordinate on $(0, 1)$, $r=0$ corresponding the singular point $o$, 
         and $h$ satisfies 
         \[
         |\on^k h|_{\g} = O(r^{ \alpha - k }),  \ \ \text{as} \ \ r \rightarrow 0,
         \]
         for some $\alpha >0$ and $k = 0, 1 $ and $2$, where $\on$ is the Levi-Civita connection of $\g$.
\end{enumerate} 
}
\end{defn}

\begin{defn}\label{defn-AF-conic-mfld}
{\rm
We say $(M^n, g, o)$ is an asymptotically flat manifold with a single isolated conical singularity at $o$, if $M^n = M_0 \cup M_\infty$ satisfies
\begin{enumerate}[(\romannumeral1)]
\item $( M_0, g|_{M_0 \setminus\{o\}}, o)$ is a compact Riemannian manifold with smooth boundary and a single conical singularity at $o$ 
         defined as in Definition \ref{defn-conic-mfld}, 
\item  $M_\infty \simeq  \R^n \setminus B_{R}(0) $ for some $R > 0$, and the smooth Riemannian metric $g$ on $M_\infty$ satisfies
         \[
          g = g_{\R^n} + O(\rho^{-\tau}), \ \ |(\nabla^{g_{\R^n}})^i g|_{g_{\R^n}} = O(\rho^{-\tau - i}),  \ \  \text{as} \ \ \rho \rightarrow +\infty,
         \]
         for $i = 1, 2$ and $3$,
         where $\tau > \frac{k-2}{2}$ is the asymptotical order, $\nabla^{g_{\R^n}}$ is the Levi-Civita connection of the Euclidean metric $g_{\R^n}$, and $\rho$ is the Euclidean 
         distance to a base point.
\end{enumerate}
}
\end{defn}

\begin{rmrk}
{\rm
For the sake of simplicity of notations, in Definition \ref{defn-AF-conic-mfld} we only defined AF manifolds with a single conical singularity and a single AF end, and we will only focus on this case in this paper. But AF manifolds with finitely many isolated conical singularities as well as finitely many AF ends can be defined similarly, and all works in this paper can be easily extended to the case of finitely many isolated conical singularities and multiple AF ends.
}
\end{rmrk}

Now we give the definition of the ADM mass of such Riemannian manifold.
\begin{defn}\label{def-mass}
{\rm
Let $(M^n, g, o)$ be an asymptotically flat manifold with a single isolated conical singularity at $o$. The mass $m(g)$ is defined 
 \[m(g)=\lim_{R\to\infty}\frac{1}{\omega_{n}}\int_{S_{R}}(\partial_{i} g_{ji}-\partial_{j}g_{ii})\ast dx^{j},\]
 where $\{\frac{\partial}{\partial x^{i}}\}$ is an orthonormal basis of $g_{\mathbb{R}^{n}}$ and the $\ast$ operator is the Hodge star operator on the Euclidean space, the indices $i,j$ run over $M^n$ and $S_{R}$ is the sphere of radius $R$ on $\mathbb{R}^{n}$ and $\omega_{n}$ is the volume of the unit sphere in $\mathbb{R}^{n}$.
 }
\end{defn}

\section{Basic facts on cones}\label{sect-Dirac-op-square-cone} \label{sec-basic}
In this section, we recall some basic facts about geometry on cones, and then derive the relation between the square of Dirac operator on a cone and on its cross section. This will be useful in \S$\ref{sect-analysis}$.

\subsection{Levi-Civita connection and curvature tensor on cones}

Let $(N^{n-1}, g^N)$ be a compact Riemannian manifold, and $(C(N), \g) = (\R_+ \times N, dr^2 + r^2 g^N)$ be the Riemannian cone over $(N^{n-1}, g^N)$, where $r$ is the coordinate on $\R_+$. Let $\{ e_1, \cdots, e_{n-1} \}$ be a local orthonormal frame of $TN$ with respect to $g^N$, $\e_i = \frac{1}{r} e_i$ for all $i=1, \cdots, n-1$, and $\partial_r = \frac{\partial}{\partial r}$. Then $\{\e_1, \cdots, \e_{n-1}, \partial_r\}$ is a local orthonormal frame of $TC(N)$ with respect to $\g$. Let $\on$ denote the Levi-Civita connection of $\g$. By Koszul's formula, one easily obtains
\be\label{eqn-connection-cone}
\begin{cases}
 & \on_{\e_i} \pr = \frac{1}{r} \e_i, \\
 & \on_{\pr} \e_i = [ \pr, \e_i ] + \on_{\e_i} \pr = - \frac{1}{r} \e_i + \frac{1}{r} \e_i =0, \\
 & \on_{\pr} \pr = 0, \\
 & \on_{\e_i} \e_j = \nabla^{g^N}_{\e_i} \e_j  - \frac{1}{r} \delta_{ij} \pr,
\end{cases}
\ \ \forall 1 \leq i, j \leq n-1.
\ee
Let $\overline{R}$ denote the Riemann curvature tensor of $\g$, and $R^N$ denote the Riemann curvature tensor of $g^N$ on $N$. In the local frame $\{\e_1, \cdots, \e_{n-1}, \partial_r\}$, the only non-vanishing components of $\overline{R}$ are
\be
\overline{R}_{ijkl} = r^2 \left( R^N_{ijkl} + g^N_{ik} g^N_{jl} - g^N_{il} g^N_{jk} \right), \ \ 1 \leq i, j, k, l \leq n-1.
\ee
In particular,
\be\label{eqn-curvature-cone}
\overline{R} (\pr, X, Y, Z) =0, \ \ \forall X, Y, Z \in \Gamma(TC(N)).
\ee
For the hypersurface $N_r := \{ r \} \times N$, by the covariant derivative formula in (\ref{eqn-connection-cone}), the shape operator $A$ with respect to the normal vector field $\pr$ is given by
\be\label{eqn-shape-operator-cone}
A X :=   \nabla_X \pr =  \frac{1}{r} X, \quad \forall X \in \Gamma(TN_r).
\ee
Then the mean curvature of $N_r$ with respect to the normal vector field $\partial_r$ is given by
\be\label{eqn-mean-curvature-cone}
H :=  {\rm tr}_{\g} A = \frac{n-1}{r}.
\ee


\subsection{Dirac operator on cones}
If $N$ is spin, then so is $C(N)$. In this case, we assume that the spin structure of $C(N)$ is induced by that of $N$. Under this assumption,  the restriction of the spinor bundle $SC(N)$ over the Riemannian cone $(C(N), \g)$ to the hypersurface $N_r := \{r\} \times N$ is isomorphic to either $S N$ or $SN \oplus SN$ according to the dimension $n-1$ of $N$ is either even or odd. Then we use the same symbol to denote a spinor field on $C(N)$ and its restriction to $N_r$. Let $\D$ denote the Dirac operator on $(C(N), \g)$, and $D^{r^2 g^N}$ the Dirac operator on $(N_r, r^2 g^N)$.  We recall the relation between these two Dirac operators given in the equation (9) in \cite{HMZ-MRL} as
\be\label{eqn-Dirac-op-relation-1}
D^{r^2 g^N} \varphi  =- \frac{1}{2} H \varphi - \pr \cdot \D \varphi - \on_{\pr} \varphi,
\ee
where $H$ is the mean curvature of $N_r$ given as in $(\ref{eqn-mean-curvature-cone})$. {\em Note the difference between our convention of mean curvature and that in \cite{HMZ-MRL}. Here and throughout the paper, the `` $\cdot$ " between a vector field and  a spinor field always denotes the Clifford multiplication on the cone $(C(N), \g)$.}  Then by plugging the mean curvature given in (\ref{eqn-mean-curvature-cone}) into the formula $(\ref{eqn-Dirac-op-relation-1})$, we have
\be
D^{r^2 g^N} \varphi  = -\frac{n-1}{2r}  \varphi - \pr \cdot \D \varphi - \on_{\pr} \varphi = -\frac{n-1}{2r}  \varphi - \pr \cdot \D \varphi - \pr( \varphi ).
\ee
Here we used the fact $\on_{\pr} \varphi = \pr (\varphi)$ followed from the covariant derivatives on the cone given in (\ref{eqn-connection-cone}). Rearranging the equation and Clifford acting by $\partial_r$ give
\begin{equation}\label{eqn-Dirac-op-relation}
\overline{D} \varphi = \partial_r \cdot \partial_r(\varphi) + \frac{n-1}{2r} \partial_r \cdot \varphi + \frac{1}{r} \partial_r \cdot D^{g^N} \varphi.
\end{equation}

Moreover, under the above identification of spinors on a cone and that on its cross section, the Clifford action of the vector field $\partial_r$ on spinors anti-commutes with the Dirac operator $D^{g^N}$, i.e. \begin{equation}\label{eqn-partial-r-D-anti-commute}
\partial_r \cdot \left( D^{g^N} \varphi \right) = - D^{g^N} \left( \partial_r \cdot \varphi \right)
\end{equation}
for all spinors $\varphi$ on $(N, g^N)$, see Proposition 1 in \cite{HMZ-MRL}.


\section{Analysis on AF manifolds with isolated conical singularities}\label{sect-analysis} \label{sec-anal}

In this section, we mainly study analytical property of  the Dirac operator $D$. In \S$\ref{subsect-weighted-Sobolev-space}$ we introduce the weighted Sobolev spaces that we use. Then we derive a priori elliptic estimate for $D$ in Proposition $\ref{prop-refined-weighted-elliptic-estimate}$, and as a result we obtain the Fredholm property for $D$ in \S$\ref{subsect-Fredholm}$. With the help of this Fredholm property, we then show the surjectivity of the map $D$ between certain weighted Sobolev spaces in Proposition $\ref{prop-surjectivity-1}$. This enables us to solve the inhomogeneous equation $D \varphi = \psi$, and will help us to solve the homogeneous Dirac equation in \S$\ref{sect-sovlving-Dirac-equation}$.

\subsection{Weighted Sobolev spaces}\label{subsect-weighted-Sobolev-space}
Let $(M^n, g, o)$ be a AF manifold with a single conical singularity at $o$ as defined in Definition \ref{defn-AF-conic-mfld}. Choose three cut-off functions $0 \leq \chi_1, \chi_2, \chi_3 \leq 1$ satisfying:
\be
\chi_1 (x) =
\begin{cases}
1, &  {\rm dist}(x, o) < \epsilon, \\
0, & {\rm dist}(x, o) > 2 \epsilon,
\end{cases}
\ee
\be
\chi_2(x) = 
\begin{cases}
1, & {\rm dist}(x, o) > 2R, \\
0, & {\rm dist}(x, o) < R,
\end{cases}
\ee
and
\be
\chi_3 = 1 - \chi_1 - \chi_2,
\ee
where $\epsilon >0$ is chosen sufficiently small such that the ball $B_{2\epsilon}(o)$, centered at singular point $o$ with radius $2\epsilon$, is contained in the asymptotically conical neighborhood $U_o$ in Definition \ref{defn-conic-mfld}, and $R>0$ is chosen sufficiently large such that $M \setminus B_{R}(o) \subset M_\infty$. 

For each $1 \leq p < +\infty, k \in \mathbb{N} $ and $\delta, \beta \in \R$,  the {\em weighted Sobolev space} $W^{k, p}_{\delta, \beta}(M)$ is defined to be the completion of $C^\infty_0(M\setminus \{o\})$ with respect to the {\em  weighted Sobolev norm } given by
\be
\| u \|^p_{W^{k, p}_{\delta, \beta}(M)} :=  \int_{M} \sum^{k}_{i=0} \left( r^{- p(\delta - i) -n} |\nabla^i u |^p \chi_1 + \rho^{ - p( \beta - i) -n} |\nabla^i u |^p \chi_2 + |\nabla^i u|^p \chi_3 \right) d\vol_{g} ,
\ee
where $r$ is the radial coordinate on the conical neighborhood $U_o$ in Definition \ref{defn-conic-mfld} and $\rho$ is the Euclidean distance function to a base point on $M_\infty$ in Definition \ref{defn-AF-conic-mfld}, and $|\nabla^i u|$ is the norm of $i^{th}$ covariant derivative of $u$ with respect to $g$.

Note that by the definition of the weighted Sobolev norms, we clearly have
\be
\begin{cases} \delta^\prime \ge \delta, \cr \beta^\prime \leq \beta, \end{cases}
\Rightarrow \ \  W^{k, p}_{\delta^\prime, \beta^\prime} \subset W^{k, p}_{\delta, \beta}.
\ee


If $V$ is a real (or complex) vector bundle over $M \setminus \{ o \}$ with a Euclidean (or Hermitian) metric and a compatible connection, then we can define the weighted sobolev norms and spaces $W^{k, p}_{\delta, \beta}(V)$, and weighted $C^k$ norms and spaces $C^k_{\delta, \beta}(V)$ for sections of the vector bundle $V$ in the same way as we define $W^{k, p}_{\delta, \beta}(M)$ and $C^k_{\delta, \beta}(M)$ for functions. In this paper, the vector bundle is usually the spinor bundle, which is denoted as $SM$.

\subsection{Elliptic estimate for the Dirac operator}

First we prove that nonnegativity of scalar curvature near the conical point implies a uniform lower bound for the absolute value of eigenvalues of the Dirac operator on the cross section $N$ of conical neighborhood of the singularity.
\begin{lem}\label{lem-cross-section-eigenvalue-lower-bound}
Let $(M^n, g, o)$, $n \geq 3$, be an AF spin manifold with a single conical singularity at $o$ whose scalar curvature $\Scal_g \geq 0$. Then the eigenvalues $ \lambda_j$ of Dirac operator $D^{g^N}$ on the cross section of a conical neighborhood of $o$ satisfy 
\be
|\lambda_j| \geq \frac{n-1}{2},  \ \ \forall j \in \Z.
\ee
The equality holds if and only if the cross section $(N, g^N)$ admits a real Killing spinor.
\end{lem}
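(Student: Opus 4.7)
The plan is to reduce the statement to Friedrich's classical eigenvalue estimate on a closed spin manifold, with the bridge being the observation that the hypothesis $\Scal_g \ge 0$ near $o$ forces a pointwise lower bound on $\Scal_{g^N}$.

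First I would compute the scalar curvature of the model cone $\g = dr^2 + r^2 g^N$; using the Gauss equation for the slice $N_r \hookrightarrow (C(N), \g)$ together with the shape operator formula $(\ref{eqn-shape-operator-cone})$, one obtains the standard identity
\[
\Scal_{\g}(r, \theta) = \frac{1}{r^2}\bigl( \Scal_{g^N}(\theta) - (n-1)(n-2) \bigr).
\]
Since $g = \g + h$ on $U_o$ with $|\on^k h|_{\g} = O(r^{\alpha - k})$ for $k = 0, 1, 2$ and $\alpha > 0$, the standard expansion of the scalar curvature under a metric perturbation yields
\[
\Scal_g(r, \theta) = \frac{1}{r^2}\bigl( \Scal_{g^N}(\theta) - (n-1)(n-2) \bigr) + O(r^{\alpha - 2}) \quad \text{as } r \to 0.
\]
If $\Scal_{g^N}(\theta_0) < (n-1)(n-2)$ at some $\theta_0 \in N$, then the leading $r^{-2}$ term is strictly negative and dominates the $O(r^{\alpha - 2})$ remainder for $r$ sufficiently small, contradicting $\Scal_g \ge 0$ along the ray $\{(r, \theta_0) : r > 0\}$. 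Hence $\Scal_{g^N} \ge (n-1)(n-2)$ pointwise on $N$.

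Friedrich's eigenvalue inequality on the closed $(n-1)$-dimensional spin manifold $(N, g^N)$ then gives
\[
|\lambda_j|^2 \;\ge\; \frac{n-1}{4(n-2)} \min_N \Scal_{g^N} \;\ge\; \frac{(n-1)^2}{4},
\]
which is the desired bound. For the rigidity, if $|\lambda_j| = (n-1)/2$ for some $j$ then both inequalities in the chain must be equalities, so $\Scal_{g^N} \equiv (n-1)(n-2)$ and Friedrich's rigidity forces the corresponding eigenspinor to be a real Killing spinor with Killing constant $\mp 1/2$. Conversely, any real Killing spinor $\psi$ on $(N, g^N)$ with constant $\pm 1/2$ satisfies $D^{g^N}\psi = \mp \tfrac{n-1}{2}\psi$ by tracing the Killing equation against the orthonormal frame, realizing equality. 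The main technical point is really only in the first step: one must check that the $O(r^{\alpha - 2})$ perturbation to the scalar curvature is genuinely lower order than the conical $r^{-2}$ term, which rests entirely on $\alpha > 0$ and the decay estimates in Definition $\ref{defn-conic-mfld}$; after that everything is a direct application of Friedrich.
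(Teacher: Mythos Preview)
Your proof is correct and follows essentially the same route as the paper: derive the asymptotic $\Scal_g = r^{-2}\bigl(\Scal_{g^N} - (n-1)(n-2)\bigr) + O(r^{\alpha-2})$, deduce $\Scal_{g^N}\ge (n-1)(n-2)$ from $\Scal_g\ge 0$, and then apply Friedrich's inequality on the $(n-1)$-dimensional cross section together with its rigidity statement. You supply somewhat more detail than the paper (the pointwise contradiction argument and the converse direction of the rigidity), but the strategy and the key ingredients are identical.
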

\begin{proof}
By straightforward calculations, one obtains that near the conical singularity $o$, the scalar curvature of the metric $g$ has the asymptotic behavior as
\be
{\rm Scal}_{g} = \frac{1}{r^2} \left( {\rm Scal}_{g^N} - (n-1)(n-2) + O(r^\alpha) \right), \ \ \text{as} \ \ r\rightarrow 0.
\ee
As a result, ${\rm Scal}_g \geq 0$ implies ${\rm Scal}_{g^N} \geq (n-1)(n-2)$. Then by combining this with the eigenvalue estimate for Dirac operator obtained by Thomas Friedrich in \cite{Friedrich-80}, we have that the eigenvalues $\lambda_j$ of Dirac operator $D^{g^N}$ satisfy
\be
|\lambda_j| \geq \frac{1}{2} \sqrt{\frac{n-1}{n-2} \min_{N}\{{\rm Scal}_{g^N}\}} \geq \frac{n-1}{2},
\ee
and the lower bound realized by Riemannian manifolds admitting real Killing spinors.

\end{proof}

Throughout the paper, we will always use $\lambda_j$ to denote the eigenvalues of the Dirac operator $D^{g^N}$ on the cross section of the model cone $(C(N), \overline{g})$, and $E_j$ to denote the space of the corresponding eigenspinors. We set 
\begin{equation}\label{eqn-nu-defn}
\nu_j := - \frac{n-1}{2} - \lambda_j, \ \ j \in \mathbb{Z}.
\end{equation}

\begin{defn}\label{defn-critical-cone}
{\rm
We will say $\delta \in \R$ is {\em critical at conical point } if $\delta = \nu_j$ for some $j \in \mathbb{Z}$, where $\nu_j$ is defined as in (\ref{eqn-nu-defn}).
}
\end{defn}

We view the Euclidean space $(\R^n, g_{\R^n})$ as a cone over the round sphere $\mathbb{S}^{n-1}$ with constant sectional curvature $1$. In Theorem 1 in \cite{Bar-JMSP-1996}, C. B\"ar proved that the Dirac operator on the round sphere $\mathbb{S}^{n-1}$ has the eigenvalues $\pm \left( \frac{n-1}{2} +k \right)$ for nonnegative integers $ k \in \Z_{\geq 0}$. By plugging these eigenvalues into the expression of critical indices in Definition \ref{defn-critical-cone}, we give the following definition.

\begin{defn}\label{defn-critical-infinity}
{\rm
We will say $\beta \in \R$ is {\em critical at infinity } if $\beta = k $ or $\beta = 1-n-k$ for some $ k \in \Z_{\geq 0} $.
}
\end{defn}

\begin{defn}\label{defn-critical-index}
{\rm
We say $(\delta, \beta) \in \R^2$ is {\em critical} if either $\delta$ is critical at conical point or $\beta$ is critical at infinity.
}
\end{defn}

By using scaling technique, the usual interior elliptic estimates, and the asymptotic control of metric $g$ near conical point in Definition $\ref{defn-conic-mfld}$ and near infinity in Definition $\ref{defn-AF-conic-mfld}$, we have the following weighted elliptic estimate.
\begin{prop}\label{prop-weight-elliptic-estimate}
For $k=1, 2$, and $p>1$, if $\varphi \in L^{p}_{\delta, \beta}(SM)$, and $D \varphi \in W^{k-1, p}_{\delta-1, \beta-1}(SM)$, then
\be
\|\varphi \|_{W^{k, p}_{\delta, \beta}(SM)} \leq C \left( \|D \varphi \|_{W^{k-1, p}_{\delta-1, \beta-1}(SM)} + \|\varphi\|_{L^{ p}_{\delta, \beta}(SM)} \right)
\ee
holds for some constant $C = C(g, n, k)$ independent of $\varphi$.
\end{prop}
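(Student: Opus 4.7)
The plan is to decompose $M$ into three overlapping regions dictated by the cut-off functions $\chi_1$, $\chi_2$, $\chi_3$, prove the estimate on each region by reducing to a uniform interior estimate on a fixed model annulus, and then recombine. On the region where $\chi_3$ is supported, $D$ is a uniformly elliptic first-order operator on a fixed compact domain with smooth coefficients, so the classical (unweighted) interior elliptic estimate for Dirac operators gives $\|\varphi\|_{W^{k,p}} \lesssim \|D\varphi\|_{W^{k-1,p}} + \|\varphi\|_{L^p}$ with no weight factors, which is exactly what the weighted norm reduces to on this piece.

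For the conical end, I would cover $\{0 < r < 2\epsilon\}$ by the dyadic annuli $A_j := \{2^{-j-1} \le r \le 2^{-j+1}\}$ for $j$ large, together with their small enlargements $A_j^* := \{2^{-j-2} \le r \le 2^{-j+2}\}$. On each $A_j^*$, pull back the geometry via the dilation $\Phi_j(x) = 2^{-j} x$; the rescaled metric $4^{j}\,\Phi_j^* g$ converges in $C^2$ to the model cone metric $\bar g = dr^2 + r^2 g^N$ on the fixed annulus $A := \{1/2 \le r \le 2\}$, by the decay hypothesis $|\overline{\nabla}^k h|_{\bar g} = O(r^{\alpha-k})$ for $k = 0,1,2$. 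Because this convergence is uniform in $j$, the standard interior Dirac estimate on $A$ applied to the pull-backs $\tilde\varphi_j := \Phi_j^*\varphi$ yields a uniform constant $C$ with
\[
\|\tilde\varphi_j\|_{W^{k,p}(A)} \le C\bigl(\|\tilde D_j \tilde\varphi_j\|_{W^{k-1,p}(A^*)} + \|\tilde\varphi_j\|_{L^p(A^*)}\bigr).
\]
Undoing the rescaling, the volume element contributes $2^{jn}$, each derivative contributes $2^j$, and matching these factors against the intrinsic weight $r^{-p(\delta-i)-n} \sim 2^{jp(\delta-i) + jn}$ in the definition of $\|\cdot\|_{W^{k,p}_{\delta,\beta}}$ shows that every term rescales into exactly the correct weighted norm. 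Summing in $j$ (and using that the enlargements $A_j^*$ have only bounded overlap) gives the weighted estimate on the conical end $\mathrm{supp}(\chi_1)$.

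For the asymptotically flat end $\mathrm{supp}(\chi_2)$, I would run the same dyadic scaling argument with annuli $\{2^{j-1} \le \rho \le 2^{j+1}\}$ dilated by $2^{-j}$; here the asymptotic control $g - g_{\R^n} = O(\rho^{-\tau})$ (with two further derivatives decaying correspondingly) forces the rescaled metrics to converge in $C^2$ to the Euclidean metric on the fixed annulus, so the interior Dirac estimate again applies uniformly, and the rescaling bookkeeping aligns with the weight $\rho^{-p(\beta-i)-n}$ instead of $r^{-p(\delta-i)-n}$. This is essentially the argument of Bartnik/Minerbe adapted from the Laplacian to the Dirac operator.

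Combining the three regional estimates via the cut-offs $\chi_1, \chi_2, \chi_3$ produces the global weighted estimate, absorbing the commutator terms $[\,D,\chi_i\,]\varphi$ (which are zeroth-order multiplication operators with compactly supported, bounded coefficients) into the $\|\varphi\|_{L^p_{\delta,\beta}}$ term on the right-hand side. The main technical obstacle I anticipate is the careful tracking of the scaling exponents: one must verify that the factors of $2^j$ coming from the Jacobian and the scaling of derivatives conspire with the weight functions so that all contributions assemble into the single weighted norm $W^{k,p}_{\delta,\beta}$ with a constant independent of $j$; this is what makes the argument work for arbitrary $\delta, \beta \in \R$ (criticality plays no role at this a priori stage — it only enters later in Proposition \ref{prop-refined-weighted-elliptic-estimate} where one drops the $L^p_{\delta,\beta}$ term).
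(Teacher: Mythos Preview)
Your proposal is correct and is precisely the argument the paper has in mind: the paper does not give a detailed proof but simply states that the estimate follows ``by using scaling technique, the usual interior elliptic estimates, and the asymptotic control of metric $g$ near conical point \ldots\ and near infinity,'' which is exactly your dyadic rescaling on each end together with the standard estimate on the compact core. Your bookkeeping of the weight exponents under dilation and the observation that criticality plays no role here are both accurate.
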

Note that Proposition $\ref{prop-weight-elliptic-estimate}$ is true for any indices $(\delta, \beta) \in \mathbb{R}^2$. For noncritical indices $(\delta, \beta) \in \mathbb{R}^2$, we further obtain the following refined elliptic estimate, which plays a crucial role in \S $\ref{subsect-Fredholm}$.

\begin{prop}\label{prop-refined-weighted-elliptic-estimate}
If $(\delta, \beta) \in \R^2$ is not critical as in Definition $\ref{defn-critical-index}$, for $k=1, 2$, there exists a constant $C= C(g, n, k)$ and a compact set $B \subset M \setminus \{o\}$ such that for any $\varphi \in L^2_{\delta, \beta}(SM)$ with $D \varphi \in L^{2}_{\delta, \beta}(SM)$,
\be
\|\varphi\|_{W^{k, 2}_{\delta, \beta}(SM)} \leq C \left( \|D \varphi \|_{W^{k-1, 2}_{\delta-1, \beta-1}(SM)} + \|\varphi\|_{L^{2}(SB)} \right).
\ee
Here $SM$ denotes the spinor bundle over $M$ and $SB$ the restriction of $SM$ on $B \subset M$.
\end{prop}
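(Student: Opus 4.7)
The argument proceeds by contradiction, in the style of Minerbe~\cite{Minerbe-CMP}. Suppose the estimate fails. Then there exists a sequence $\{\varphi_i\}\subset W^{k,2}_{\delta,\beta}(SM)$ with $\|\varphi_i\|_{W^{k,2}_{\delta,\beta}}=1$, $\|D\varphi_i\|_{W^{k-1,2}_{\delta-1,\beta-1}}\to 0$, and $\|\varphi_i\|_{L^2(SB_\ell)}\to 0$ for every term of a compact exhaustion $\{B_\ell\}$ of $M\setminus\{o\}$. Proposition~\ref{prop-weight-elliptic-estimate} combined with Rellich--Kondrachov on interior compact subdomains yields a subsequence converging weakly in $W^{k,2}_{\delta,\beta}$ and strongly in $L^{2}_{\mathrm{loc}}(M\setminus\{o\})$ to some $\varphi_\infty$, which must vanish by the local $L^2$ hypothesis. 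The task is then to upgrade weak convergence to strong convergence in the weighted norm, which will contradict the unit normalization.

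To achieve this I would split $\varphi_i=\chi_1\varphi_i+\chi_3\varphi_i+\chi_2\varphi_i$ using the partition of unity from \S\ref{subsect-weighted-Sobolev-space} and estimate each piece. The middle piece $\chi_3\varphi_i$ is supported in a fixed compact set and tends to zero by the interior elliptic estimate and the $L^2(SB_\ell)$ hypothesis. For $\chi_1\varphi_i$ near the conical tip I would transfer the problem to the model cone $(C(N),\g)$ using Definition~\ref{defn-conic-mfld} and expand in the eigenspinor basis $\{\phi_j\}$ of $D^{g^N}$. Formula~(\ref{eqn-Dirac-op-relation}), together with the anti-commutation~(\ref{eqn-partial-r-D-anti-commute}), decouples $\D u=v$ into independent first-order ODE systems in $r$ on each two-dimensional subspace $\mathrm{span}(\phi_j,\pr\cdot\phi_j)$, whose indicial exponents at $r=0$ are exactly the numbers $\nu_j=-\tfrac{n-1}{2}-\lambda_j$. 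Since $\delta$ is not critical at the conical point, $\delta\neq\nu_j$ for every $j$, so standard weighted ODE theory bounds each coefficient function $f_{i,j}(r)$ in the weighted norm on $\R_+$ by the weighted norm of its right-hand side plus an $L^2$ norm on a fixed annulus; the perturbation $D-\D$ is $O(r^\alpha)$ smaller than the principal part of $\D$ by Definition~\ref{defn-conic-mfld} and is absorbed for the cut-off scale $\epsilon$ small. Summing in $j$ via Parseval gives smallness of $\|\chi_1\varphi_i\|_{W^{k,2}_{\delta,\beta}}$. The asymptotically flat piece $\chi_2\varphi_i$ is handled identically on the Euclidean model, where B\"ar's computation of the sphere spectrum gives precisely the indicial roots $k$ and $1-n-k$ of Definition~\ref{defn-critical-infinity}; non-criticality of $\beta$ rules them out, and the error $O(\rho^{-\tau})$ is absorbed by choosing $R$ large.

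The main obstacle is ensuring that the mode-wise weighted ODE estimates combine into a uniform estimate after summation in $j$. The spectral gap $|\lambda_j|\geq\tfrac{n-1}{2}$ from Lemma~\ref{lem-cross-section-eigenvalue-lower-bound} prevents degeneration of the ODE constants for low modes, while for $|\lambda_j|$ large the zeroth-order term in each reduced ODE dominates and produces constants of order $1/|\lambda_j|$, making Parseval summation well-behaved. A secondary technical point is that $D-\D$ is not diagonal in the eigenspinor basis and genuinely mixes modes, so it must be treated as a single small perturbation in the weighted topology rather than mode by mode, and absorbed into the left-hand side using the smallness of $\epsilon$ at the conical end and of $1/R$ at infinity.
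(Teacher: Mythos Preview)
Your contradiction argument is a legitimate alternative, but it is a more elaborate route than the paper takes. The paper proves the proposition as an immediate consequence of Proposition~\ref{prop-weight-elliptic-estimate} together with a direct $L^2$ estimate (stated as a separate lemma): working on the model cone, one decomposes $\varphi=\sum_j u_j(r)\varphi_j$ and $\psi=\sum_j v_j(r)\,\partial_r\!\cdot\!\varphi_j$, substitutes $r=e^s$, and performs a single integration by parts against $\chi^2 e^{-2\delta s}$ in each mode. This yields the inequality
\[
\int_{-\infty}^0 (\chi\tilde u_j)^2 e^{-2\delta s}\,ds \;\leq\; C\!\int_{-\infty}^0 \chi^2 \tilde v_j^2 e^{-2\delta s}\,ds \;+\; C\!\int_{-2}^{-1}\tilde u_j^2 e^{-2\delta s}\,ds
\]
with $C$ independent of $j$, precisely because $\delta\neq\nu_j$; summing via Parseval gives the estimate directly. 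Your scheme ultimately invokes the same mode-wise ODE bound to show smallness of $\chi_1\varphi_i$, so the contradiction scaffolding is not buying anything---once you have that bound, you already have the lemma without needing a sequence.

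Two remarks on details. First, your appeal to Lemma~\ref{lem-cross-section-eigenvalue-lower-bound} for the low-mode spectral gap is misplaced: that lemma assumes $\Scal_g\geq 0$, which Proposition~\ref{prop-refined-weighted-elliptic-estimate} does not. Uniformity in $j$ follows already from non-criticality and the discreteness of $\mathrm{Spec}(D^{g^N})$, since $\inf_j\bigl|\tfrac{n-1}{2}+\lambda_j+\delta\bigr|>0$. Second, the decomposition is into scalar first-order ODEs, not $2\times 2$ systems: because $\{\partial_r\!\cdot\!\varphi_j\}$ is again an eigenbasis (with eigenvalues $-\lambda_j$), the equation $\bar D\varphi=\psi$ decouples completely once you expand $\varphi$ in $\{\varphi_j\}$ and $\psi$ in $\{\partial_r\!\cdot\!\varphi_j\}$.
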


Proposition $\ref{prop-refined-weighted-elliptic-estimate}$ is an immediate consequence of Proposition $\ref{prop-weight-elliptic-estimate}$ (with $p=2$) and the following Lemma:
\begin{lem}\label{lem-refined-weighted-elliptic-estimate}
If $(\delta, \beta) \in \R^2$ is not critical as in Definition $\ref{defn-critical-index}$, there exists a constant $C$ and a compact set $B \subset M \setminus \{o\}$ such that for any $\varphi \in L^2_{\delta, \beta}(SM)$ with $D \varphi \in L^{2}_{\delta, \beta}(SM)$,
\be
\|\varphi\|_{L^{2}_{\delta, \beta}(SM)} \leq C \left( \|D \varphi \|_{L^{2}_{\delta-1, \beta-1}(SM)} + \|\varphi\|_{L^{2}(SB)} \right).
\ee
Here $SM$ denotes the spinor bundle over $M$ and $SB$ the restriction of $SM$ on $B \subset M$.\end{lem}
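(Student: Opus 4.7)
The plan is a three-region argument: establish the weighted estimate separately on a small conical neighborhood of $o$, on a large exterior region in the asymptotically flat end, and on the compact annulus in between; then paste the three estimates together with cut-off functions, taking $B$ to be a common compact thickening of the transition regions. The crucial observation is that on the two model spaces -- the exact cone $(C(N),\g)$ and Euclidean $(\R^n,g_{\R^n})$ viewed as the cone over the round sphere $\mathbb{S}^{n-1}$ -- the Dirac operator decouples into a family of radial ODEs indexed by the eigenvalues $\lambda_j$ of the cross-sectional Dirac operator, and the noncritical hypothesis is precisely what makes each such ODE boundedly invertible in the appropriate weighted $L^2$.

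On the exact cone, formula (3.7) together with the intertwining (3.8) allows one to write $\D=-\pr\cdot\bigl(\partial_r+\tfrac{n-1}{2r}+\tfrac{1}{r}D^{g^N}\bigr)$. On an eigenspinor $\sigma_j$ of $D^{g^N}$ with eigenvalue $\lambda_j$ the radial part becomes the scalar first-order operator $\partial_r+r^{-1}\bigl(\tfrac{n-1}{2}+\lambda_j\bigr)=\partial_r-\nu_j/r$. The substitution $s=\log r$, $u_j(s)=e^{-\delta s}\varphi_j(e^s)$ converts this into the constant-coefficient translation operator $\partial_s+(\delta-\nu_j)$ on $L^2(\R;\,ds)$, with the weights chosen so that $\|\varphi_j\|_{L^2_\delta}=\|u_j\|_{L^2(\R)}$ and $\|(\partial_r-\nu_j/r)\varphi_j\|_{L^2_{\delta-1}}=\|(\partial_s+(\delta-\nu_j))u_j\|_{L^2(\R)}$. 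Since $\partial_s+c$ has bounded inverse of norm $1/|c|$ on $L^2(\R)$ whenever $c\ne 0$, the noncritical condition $\delta\ne\nu_j$ for every $j$, together with $|\lambda_j|\to\infty$, produces a mode-uniform bound, and Parseval on $N$ assembles the estimate on the full exact cone. For the true metric $g=\g+h$ with $|\on^k h|_{\g}=O(r^{\alpha-k})$, the difference $D-\D$ is a first-order operator with coefficients of size $O(r^{\alpha-1})$; on supports inside $\{r<\epsilon\}$ its operator norm from $W^{1,2}_{\delta,\beta}$ to $L^2_{\delta-1,\beta-1}$ is $O(\epsilon^\alpha)$, so choosing $\epsilon$ small enough absorbs it into the left-hand side, using Proposition~\ref{prop-weight-elliptic-estimate} to control the gradient term that arises.

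The argument at infinity is parallel in structure: B\"ar's spectrum $\pm(\tfrac{n-1}{2}+k)$, $k\ge 0$, of $D^{\mathbb{S}^{n-1}}$ generates exactly the list in Definition~\ref{defn-critical-infinity}, while the decay $g-g_{\R^n}=O(\rho^{-\tau})$ plays the role of the conical perturbation and is absorbed by choosing $R$ large. On the compact middle annulus, $D$ is uniformly elliptic on a smooth Riemannian domain, so the classical interior $L^2$ elliptic estimate gives $\|\varphi\|_{L^2}\le C(\|D\varphi\|_{L^2}+\|\varphi\|_{L^2(B')})$ on a slight thickening $B'$. The three pieces are glued by writing $\varphi=\sum_i\chi_i\varphi$, applying each local estimate to $\chi_i\varphi$, and noting that $[D,\chi_i]$ is a zeroth-order operator with compactly supported coefficient, so it contributes only a term of the form $\|\varphi\|_{L^2(B)}$ for a common compact thickening $B$. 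The main obstacle I foresee is the simultaneous book-keeping of the two perturbations: one must pick $\epsilon$ small and $R$ large so that both the $O(\epsilon^\alpha)$ and $O(R^{-\tau})$ error operators are small relative to the model-space constant, while keeping the middle region compactly contained in $M\setminus\{o\}$ and containing the supports of $\nabla\chi_i$ so the interior estimate applies uniformly. Once $\epsilon$, $R$, and $B$ are fixed, the constant $C$ is determined and the lemma follows.
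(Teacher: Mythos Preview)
Your proposal is correct and follows essentially the same strategy as the paper: reduce to the two model geometries via cut-offs, spectrally decompose the cone Dirac operator along the cross-section, pass to $s=\log r$, and invoke noncriticality to obtain a mode-uniform radial estimate, then absorb the perturbation $D-\D$ by shrinking/enlarging the regions. The only tactical difference is that the paper derives the mode-wise bound by an integration by parts on the half-line with a cut-off $\chi$ (so the $\|\varphi\|_{L^2(SB)}$ term arises from the support of $\chi'$), rather than by appealing to the full-line inverse of $\partial_s+c$ and generating the compact term from the commutator $[D,\chi_i]$ as you do.
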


\begin{proof}
    We only need to deal with the model metric $\Bar{g}$ on the conical neighborhood of the singular point $o$ and $g_{\mathbb{R}^{n}}$ near the infinity. And we will only provide details for the estimate on the conical neighborhood. The estimate near the infinity can be done similarly. 
    
    In the rest of the proof, we work on the model cone $((0, 1)\times N, \g=dr^2 + r^2 g^N)$.  Let  $\{\varphi_j\}^{+\infty}_{j = - \infty}$ be an orthonormal basis of $L^2(SN)$ consisting of eigenspinors of $D^{g^{N}}$ with eigenvalues $\lambda_{j}$. Then by the anti-commutativity in $(\ref{eqn-partial-r-D-anti-commute})$, $\{\partial_r \cdot \varphi_j\}^{+\infty}_{j = - \infty}$ is another orthonormal basis of $L^2(SN)$ consisting of eigenspinor of $D^{g^N}$ with eigenvalues $-\lambda_j$.

    For a spinor $\varphi$ on the cone $((0, 1) \times N, \overline{g}) $, let $\psi = \overline{D} \varphi$. We write
    \begin{equation} \label{def-spectral-decomposition}
    \varphi = \sum^{+ \infty}_{j = -\infty} u_j(r) \varphi_j, \ \ \text{and} \ \ 
    \psi = \sum^{+\infty}_{j=-\infty} v_j(r) \partial_r \cdot \varphi_j.
    \end{equation}
    Then by $(\ref{eqn-Dirac-op-relation})$, the equation $\overline{D} \varphi = \psi$ is equivalent to 
    \begin{equation}\label{eqn-spectral-decomposition}
    \frac{d}{dr}u_j(r) + \frac{n-1}{2r}u_j(r) + \frac{\lambda_j}{r}u_j(r) = v_j(r), \ \ j \in \mathbb{Z}.
    \end{equation}
    Let $r = e^s, \tu_j(s) = u_j(e^s), \tv_j(s) = e^{s}v_j(e^s)$, then
    \begin{equation}
    \tu^\prime_j(s) = \left(\frac{du_j}{dr}\right)(e^s)e^s.
    \end{equation} 
    We will use `` $\prime$ " to denote the derivative with respect to the variable $s$. Note that $s \in (-\infty, 0)$. By doing this change of variable, the equation (\ref{eqn-spectral-decomposition}) becomes
    \begin{equation} \label{eqn-spectral-decomposition1}
    \tu^\prime_j(s) + \left( \frac{n-1}{2} + \lambda_j\right) \tu_j(s) = \tv_j(s).
    \end{equation}



Take a cut-off function $\chi$ defined on $(-\infty, 0)$, which vanishes on $(-1, 0)$ and is equal to $1$ on $(-\infty, -2)$. Integration by parts provides
  \begin{eqnarray*}
    & & \left(\frac{n-1}{2}+\lambda_{j}\right)\int_{-\infty}^{0}\left(\chi \tu_{j}\right)^2 e^{-2\delta s}ds \\
     & = & \int_{-\infty}^{0}\left(\chi^2 \tu_{j} \tv_{j}\right) e^{- 2\delta s}ds -\int_{-\infty}^{0}\left(\chi^2 \tu_{j}\right) \tu'_{j}e^{(-2\delta)s}ds \\     
     &=& \int_{-\infty}^{0}\left(\chi^2 \tu_{j} \tv_{j}\right)e^{-2\delta s} ds -\frac{1}{2}\int_{-\infty}^{0}\chi^2 \left(\tu^2_{j}\right)'e^{(-2\delta)s}ds\\
     &=& \int_{-\infty}^{0}\left(\chi^2 \tu_{j}\tv_{j}\right)e^{-2\delta s} ds +\frac{1}{2}\int_{-\infty}^{0}\left(\chi^2\right)' \tu^2_{j}e^{-2\delta s}ds - \delta \int_{-\infty}^{0}\left(\chi^2\right) \tu^2_{j}e^{-2\delta s}ds\\
     &=& \int_{-\infty}^{0}\left(\chi^2 \tu_{j}\tv_{j}\right) e^{-2\delta s} ds +\frac{1}{2}\int_{-\infty}^{0}\left(\chi^2\right)' \tu^2_{j} e^{-2\delta s} ds - \delta \int_{-\infty}^{0}\left(\chi^2\right)\tu^2_{j} e^{-2\delta s} ds.
  \end{eqnarray*}
Therefore, we have
  \begin{eqnarray*}
     \left(\frac{n-1}{2} + \lambda_j + \delta \right)\int_{-\infty}^{0}\left(\chi \tu_{j}\right)^2 e^{-2 \delta s} ds
     & = &\int_{-\infty}^{0}\left(\chi^2 \tu_{j} \tv_{j}\right) e^{-2\delta s}ds +\frac{1}{2}\int_{-\infty}^{0}\left(\chi^2\right)' \tu^2_{j} e^{-2\delta s} ds.
  \end{eqnarray*}
  Thus, if $\delta \neq - \frac{n-1}{2} -\lambda_j $ for all $j \in \mathbb{Z}$, we have
  \begin{eqnarray}
     \int_{-\infty}^{0}\left(\chi \tu_{j}\right)^2 e^{-2\delta s} ds
     & \leq &
     C \int_{-\infty}^{0}\left(\chi^2 \tv^2_{j}\right) e^{-2 \delta s} ds + C \int_{-2}^{-1}\tu^2_{j} e^{-2\delta s} ds. \label{ellest}
  \end{eqnarray}
  for some constant $C$ independent of $j$. Then changing the variable back to $r$, we obtain the estimate near the conical point. Similarly, we can obtain the estimate at infinity, and complete the proof.
    \end{proof}

\subsection{Fredholm property of the Dirac operator}\label{subsect-Fredholm}
We consider the unbounded operator 
\begin{eqnarray*}
D_{\delta, \beta}: {\rm Dom}(D_{\delta, \beta}) & \rightarrow & L^2_{\delta-1, \beta-1}(SM) \\
 \varphi & \mapsto & D \varphi,
\end{eqnarray*}
whose domain ${\rm Dom}(D_{\delta, \beta})$ is dense subset of $L^2_{\delta,\beta}(SM)$ consisting of spinors $\varphi$ such that $D\varphi \in L^2_{\delta, \beta}(SM)$ (in the sense of distributions).

\begin{lem}
The unbounded operator $D_{\delta, \beta}$ is closed.
\end{lem}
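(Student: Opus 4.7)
The plan is to verify the standard graph-closedness criterion. Take a sequence $\{\varphi_n\} \subset {\rm Dom}(D_{\delta, \beta})$ with $\varphi_n \to \varphi$ in $L^2_{\delta, \beta}(SM)$ and $D\varphi_n \to \psi$ in $L^2_{\delta - 1, \beta - 1}(SM)$. I must show $\varphi \in {\rm Dom}(D_{\delta, \beta})$ with $D_{\delta, \beta}\varphi = \psi$.

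The key observation is purely local. On any compact subset $K$ of the smooth part $M \setminus \{o\}$, both weight combinations $r^{-2\delta - n}\chi_1 + \rho^{-2\beta - n}\chi_2 + \chi_3$ and $r^{-2(\delta - 1) - n}\chi_1 + \rho^{-2(\beta - 1) - n}\chi_2 + \chi_3$ entering the two weighted norms are bounded above and below by positive constants depending only on $K$, since $r$ and $\rho$ are bounded away from $0$ and $\infty$ on $K$. Consequently the inclusions $L^2_{\delta, \beta}(SM), L^2_{\delta - 1, \beta - 1}(SM) \hookrightarrow L^2_{{\rm loc}}(SM|_{M \setminus \{o\}})$ are continuous, and the two given convergences descend to convergence in $L^2_{{\rm loc}}$ on the smooth part, hence to convergence in the distributional sense on $\mathcal{D}'(M \setminus \{o\})$.

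With distributional convergence established, the rest is automatic. The Dirac operator $D$ is a first-order differential operator with smooth coefficients on $M \setminus \{o\}$, so it is continuous on $\mathcal{D}'(M \setminus \{o\})$. Applying $D$ to the limit $\varphi$ and using uniqueness of distributional limits gives $D\varphi = \psi$ in $\mathcal{D}'(M \setminus \{o\})$. Since by hypothesis $\psi \in L^2_{\delta - 1, \beta - 1}(SM)$, this identity simultaneously shows that $D\varphi$ lies in the target space $L^2_{\delta - 1, \beta - 1}(SM)$, so $\varphi \in {\rm Dom}(D_{\delta, \beta})$, and that $D_{\delta, \beta}\varphi = \psi$.

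There is no genuine obstacle: this is the routine argument that the maximal distributional realization of a differential operator between weighted $L^2$ spaces is closed. The only point worth highlighting is the local embedding into $L^2_{{\rm loc}}$ on the smooth part, which is where excising the conical point $o$ and working away from infinity is essential — the weights degenerate precisely at those two ends, but they remain two-sided bounded on any compactum avoiding them, so no further analysis (beyond the elementary fact that $L^2_{{\rm loc}}$ convergence implies distributional convergence) is needed.
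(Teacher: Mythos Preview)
Your argument is correct: the maximal distributional realization of a differential operator with smooth coefficients is always closed, and since the weights are bounded above and below on compacta of $M\setminus\{o\}$, weighted $L^2$ convergence implies $L^2_{\rm loc}$ convergence there, which is all you need to pass to the distributional limit.

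The paper takes a different route. Instead of passing through $L^2_{\rm loc}$ and distributions, it invokes the weighted elliptic estimate of Proposition~\ref{prop-weight-elliptic-estimate} (applied to the differences $\varphi_i-\varphi_j$) to conclude that the sequence is Cauchy in $W^{1,2}_{\delta,\beta}(SM)$, hence that the limit $\varphi_0$ actually lies in $W^{1,2}_{\delta,\beta}(SM)$ and satisfies $D\varphi_0=\psi_0$. Your approach is more elementary and works for any first-order operator with smooth coefficients, without any elliptic input; the paper's approach, while using a heavier tool, yields the extra information that the graph norm controls the full $W^{1,2}_{\delta,\beta}$ norm, i.e.\ that ${\rm Dom}(D_{\delta,\beta})\subset W^{1,2}_{\delta,\beta}(SM)$---a fact the paper uses later (e.g.\ in the proof of Proposition~\ref{prop-Fredholm}) but which is not needed for closedness itself.
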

\begin{proof}
Let ${\varphi_i} $ be a sequence in $L^{2}_{\delta, \beta}$, $\varphi_0 \in L^{2}_{\delta, \beta}(SM)$, and $\psi_{0} \in L^{2}_{\delta-1, \beta-1}(SM)$ such that 
\be
\varphi_i \rightarrow \varphi_0, \ \ \text{in} \ \ L^{2}_{\delta, \beta}(SM), \ \ \text{as} \ \ i \rightarrow \infty,
\ee
and 
\be
D\varphi_i \rightarrow \psi_0, \ \ \text{in} \ \ L^2_{\delta-1, \beta -1}(SM), \ \ \text{as} \ \ i \rightarrow \infty.
\ee
Then by the weighted elliptic estimate in Proposition \ref{prop-weight-elliptic-estimate}, we have that $\{\varphi_i\}$ is a Cauchy sequence in $W^{1, 2}_{\delta, \beta}(SM) \subset L^{2}_{\delta, \beta}$. As a result, $\varphi_0 \in W^{1, 2}_{\delta, \beta}(SM)$ and $D \varphi_0 = \psi_0$, since
\begin{eqnarray*}
& & \|D \varphi_0 - \psi_0\|_{L^2_{\delta-1, \beta-1}(SM)} \\
& \leq & \|D \varphi_0 - D \varphi_i\|_{L^2_{\delta-1, \beta-1}(SM)} + \|D \varphi_i - \psi_0\|_{L^2_{\delta-1, \beta-1}(SM)} \\
& \leq & C \|\varphi_0 - \varphi_i\|_{W^{1, 2}_{\delta, \beta}(SM)} + \|D \varphi_i - \psi_0\|_{L^2_{\delta-1, \beta-1}(SM)} \\
& \rightarrow & 0, \ \ \text{as} \ \ i \rightarrow \infty.
\end{eqnarray*}
\end{proof}

The usual $L^2$ pairing $(\cdot, \cdot)_{L^2(SM)}$ identifies the topological dual space of $L^2_{\delta, \beta}$ with $L^{2}_{-\delta-n, - \beta -n}$. For this identification, the adjoint $\left( D_{\delta, \beta} \right)^*$ of $D_{\delta, \beta}$ is
\begin{eqnarray*}
\left( D_{\delta, \beta} \right)^* : {\rm Dom}\left((D_{\delta, \beta})^{*}\right) & \rightarrow & L^{2}_{-\delta-n, -\beta - n} \\
  \psi & \mapsto & D \psi,
\end{eqnarray*}
where the domain ${\rm Dom}\left( \left( D_{\delta, \beta} \right)^* \right)$ is the dense subset of $L^{2}_{-\delta+1-n, -\beta+1-n}$ consisting of spinors $\psi$ such that $D \psi \in L^2_{-\delta-n, \beta-n}$ (in the distributional sense).

\begin{prop}\label{prop-Fredholm}
If $(\delta, \beta) \in \R^2$ is not critical as in Definition $\ref{defn-critical-index}$, then the operator $D_{\delta, \beta}$ is Fredholm, namely, 
\begin{enumerate}[$(1)$]
\item ${\rm Ran}(D_{\delta, \beta})$ is closed,
\item ${\rm dim}\left( {\rm Ker}(D_{\delta, \beta}) \right) < +\infty$,
\item ${\rm dim}\left( {\rm Ker}((D_{\delta, \beta})^*) \right) < +\infty$.
\end{enumerate}
\end{prop}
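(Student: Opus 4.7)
The plan is to derive all three Fredholm conditions from the refined elliptic estimate of Proposition \ref{prop-refined-weighted-elliptic-estimate} together with the Rellich-type compactness of the embedding $W^{1,2}_{\delta,\beta}(SM) \hookrightarrow L^2(SB)$ on the compact set $B \subset M \setminus \{o\}$. The general scheme is the standard one for semi-Fredholm operators produced by an a priori estimate with compact perturbation: prove closed range and finite-dimensional kernel directly, then handle the cokernel by reducing to the kernel of the formal adjoint.

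For the finite-dimensional kernel, any $\varphi \in \ker(D_{\delta,\beta})$ satisfies $\|\varphi\|_{W^{1,2}_{\delta,\beta}(SM)} \leq C\|\varphi\|_{L^2(SB)}$ by Proposition \ref{prop-refined-weighted-elliptic-estimate}. Taking a bounded sequence in the kernel (in the $L^2_{\delta,\beta}$-norm, say), the estimate gives uniform $W^{1,2}_{\delta,\beta}$-bounds, and classical Rellich--Kondrachov applied on a slightly larger compact neighborhood of $B$ produces a subsequence converging strongly in $L^2(SB)$; applying the estimate once more promotes this to convergence in $L^2_{\delta,\beta}$, so the unit ball of the kernel is precompact and $\dim\ker(D_{\delta,\beta}) < \infty$.

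For closed range, suppose $\psi_i = D\varphi_i \to \psi$ in $L^2_{\delta-1,\beta-1}$. Since $\ker(D_{\delta,\beta})$ is a closed finite-dimensional subspace of $L^2_{\delta,\beta}$, one may replace $\varphi_i$ by its projection onto the $L^2_{\delta,\beta}$-orthogonal complement of the kernel without changing $D\varphi_i$. The crucial claim is that $\|\varphi_i\|_{L^2(SB)}$ remains bounded; granted this, Proposition \ref{prop-refined-weighted-elliptic-estimate} bounds $\varphi_i$ in $W^{1,2}_{\delta,\beta}$, a weakly convergent subsequence yields a limit $\varphi$ with $D\varphi = \psi$, and $\psi \in \mathrm{Ran}(D_{\delta,\beta})$. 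Boundedness is proved by contradiction: if $\|\varphi_i\|_{L^2(SB)} \to \infty$, the rescaled sequence $\tilde{\varphi}_i := \varphi_i/\|\varphi_i\|_{L^2(SB)}$ satisfies $\|\tilde{\varphi}_i\|_{L^2(SB)} = 1$ and $D\tilde{\varphi}_i \to 0$ in $L^2_{\delta-1,\beta-1}$; the refined estimate bounds $\tilde{\varphi}_i$ in $W^{1,2}_{\delta,\beta}$, a weakly convergent subsequence combined with strong convergence in $L^2(SB)$ via Rellich produces a limit $\tilde{\varphi} \in \ker(D_{\delta,\beta})$ lying in the orthogonal complement of the kernel with $\|\tilde{\varphi}\|_{L^2(SB)} = 1$, which is absurd.

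For finite-dimensional cokernel, identify $\mathrm{Ran}(D_{\delta,\beta})^{\perp}$ with $\ker((D_{\delta,\beta})^{*})$ via the pairing described in the excerpt. The adjoint is essentially $D_{\delta',\beta'}$ with $(\delta',\beta') = (-\delta+1-n, -\beta+1-n)$, so the kernel argument above applies to the adjoint provided this pair is again non-critical. At infinity this is immediate from Definition \ref{defn-critical-infinity}, whose critical set $\{k, 1-n-k : k \in \mathbb{Z}_{\geq 0}\}$ is invariant under $\beta \mapsto 1-n-\beta$. At the cone, the anti-commutation (\ref{eqn-partial-r-D-anti-commute}) forces the spectrum of $D^{g^N}$ to be symmetric under $\lambda \mapsto -\lambda$, so the set of critical exponents $\nu_j = -\tfrac{n-1}{2} - \lambda_j$ is invariant under $\nu \mapsto 1-n-\nu$, exactly the symmetry required. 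Hence $\dim \ker((D_{\delta,\beta})^{*}) < \infty$.

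I expect the main obstacle to be the contradiction step for closed range: the argument requires a functional-analytic complement of $\ker(D_{\delta,\beta})$ that behaves well under weak limits, and careful bookkeeping to ensure that both $\|\tilde{\varphi}\|_{L^2(SB)} = 1$ (from strong Rellich convergence on $B$) and the orthogonality $\tilde{\varphi} \perp \ker(D_{\delta,\beta})$ (from weak $L^2_{\delta,\beta}$-convergence) survive in the limit simultaneously; everything else in the argument is fairly routine once the refined estimate of Proposition \ref{prop-refined-weighted-elliptic-estimate} is available.
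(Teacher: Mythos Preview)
Your proof is correct, but it differs from the paper's in two places.

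For item (1), you give the standard direct semi-Fredholm argument: project off the kernel, run the contradiction argument using the refined estimate plus Rellich on $B$. The paper instead proves (2) and (3) first and then deduces (1) from the abstract functional-analytic fact that a closed densely defined operator with finite-dimensional kernel and finite-dimensional $\ker((D_{\delta,\beta})^*)$ automatically has closed range (they cite \cite[p.~156]{AbramovichAliprantis}). Your route is more self-contained and avoids that citation; theirs is shorter.

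For item (3), you identify the adjoint with $D$ acting on the dual weighted space at indices $(1-n-\delta,\,1-n-\beta)$ and then verify, using the symmetry of the spectrum of $D^{g^N}$ and of the integer set $\{k,\,1-n-k\}$, that these dual indices are again non-critical, so part (2) applies directly. The paper bypasses this symmetry check entirely: since critical indices are discrete in each variable, one can simply choose \emph{any} non-critical $\delta'$ sufficiently small and $\beta'$ sufficiently large so that $L^2_{-\delta+1-n,\,-\beta+1-n}\subset L^2_{\delta',\beta'}$, giving $\ker((D_{\delta,\beta})^*)\subset \ker(D_{\delta',\beta'})$, and then invoke (2). Your observation about the symmetry of the critical sets is correct and pleasant, but strictly more than is needed here.
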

\begin{proof}
\begin{enumerate}[(1)] \item  The closeness of ${\rm Ran}(D_{\delta, \beta})$ follows from (2) and (3), namely ${\rm dim}\left( {\rm Ker}(D_{\delta, \beta}) \right) < +\infty$ and ${\rm dim}\left( {\rm Ker}((D_{\delta, \beta})^*) \right) < +\infty$ \cite[ p. 156]{AbramovichAliprantis}.
    \item 
    Clearly, $ {\rm Ker}(D_{\delta, \beta}) $ consists of spinors $\varphi \in L^2_{\delta, \beta}(M)$ such that $D \varphi =0$. Thus, by the standard elliptic regularity theory, ${\rm Ker}(D_{\delta, \beta}) \subset \left( C^\infty (SM) \cap L^2_{\delta, \beta}(SM) \right)$. Moreover, by the weighted elliptic estimate in Proposition \ref{prop-weight-elliptic-estimate}, we have
\be
{\rm Ker}(D_{\delta, \beta}) \subset W^{1, 2}_{\delta, \beta}(SM).
\ee
Thus ${\rm Ker}(D_{\delta, \beta})$ is the same as the kernel ${\rm Ker}\left( \widetilde{D_{\delta, \beta}} \right)$ of the bounded operator 
\begin{eqnarray}
\widetilde{D_{\delta, \beta}}: W^{1, 2}_{\delta, \beta}(SM) & \rightarrow & L^2_{\delta, \beta}(SM) \\
 \varphi & \mapsto & D \varphi.
\end{eqnarray}
As a result, ${\rm Ker}(D_{\delta, \beta}) = {\rm Ker}\left( \widetilde{D_{\delta, \beta}} \right)$ is closed in $W^{1, 2}_{\delta, \beta}(SM)$. Thus, it suffices to show that the unit sphere in ${\rm Ker}(D_{\delta, \beta})$ as a subspace of $W^{1, 2}_{\delta, \beta}(SM)$ is compact. This then follows from the refined weighted elliptic estimate in Proposition \ref{prop-refined-weighted-elliptic-estimate} and the compactness of the embedding $W^{1, 2}_{\delta, \beta}(SM) \subset L^2(SB)$.

\item Finally, note that ${\rm Ker}((D_{\delta, \beta})^*) \subset {\rm Ker}(D_{\delta^\prime, \beta^\prime}) $ for some small non-critical $\delta^\prime$ and large non-critical $\beta^\prime$. Then ${\rm dim}\left( {\rm Ker}((D_{\delta, \beta})^*) \right) < +\infty$ follows from (2). 
\end{enumerate}
\end{proof}


\subsection{Solving the equation $D \varphi=\psi$.}

Let $(M^n, g, o)$ be a AF manifold with a single conical singularity at $o$ as defined in Definition \ref{defn-AF-conic-mfld}. 

We first solve the equation $D \varphi = \psi $ near the conical point $o$. The equation can be solved near infinity similarly. We will use $B_r$ to denote the ball centered at the conical singularity $o$ with radius $r$, and $SB_r$ to denote the spinor bundle over it. In the notations for weighted Sobolev norms and spaces over $B_r$, the subscript $\beta$ will be neglected, and they will be written as $\|\cdot\|_{W^{1, 2}_{\delta}(SB_r)}$ and $W^{1, 2}_{\delta}(SB_r)$. 

For the Dirac operator $\overline{D}$ of the model cone metric $\overline{g} = dr^2 + r^2 g^N$, we have the following lemma.
\begin{lem}\label{lemma-G}
  Given $\delta$, which is noncritical at conical point, and a small number $r_{0} >0$, there is a bounded operator
  \[G:L^{2}_{\delta-1}(SB_{2r_{0}})\to W^{1,2}_{\delta}(SB_{2r_{0}})\]
   such that $\D\circ G=id$. 
\end{lem}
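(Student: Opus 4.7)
The plan is to invert $\D$ on the conical neighbourhood by explicit separation of variables. I will expand both $\varphi$ and $\psi$ in the spectral basis of the cross-sectional Dirac operator $D^{g^N}$, reduce $\D\varphi=\psi$ to an infinite family of scalar ODEs indexed by the eigenvalues $\lambda_j$, and for each $j$ integrate against a one-sided exponential kernel whose direction is chosen according to the sign of $c_j:=\delta+\tfrac{n-1}{2}+\lambda_j=\delta-\nu_j$. Noncriticality of $\delta$ at the conical point is exactly what ensures $c_j\neq 0$ for every $j$.

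Concretely, fix an $L^2(SN)$-orthonormal eigenbasis $\{\varphi_j\}_{j\in\Z}$ of $D^{g^N}$ as in the proof of Lemma~\ref{lem-refined-weighted-elliptic-estimate} and write $\varphi=\sum_j u_j(r)\varphi_j$, $\psi=\sum_j v_j(r)\,\pr\cdot\varphi_j$. By (\ref{eqn-Dirac-op-relation}) combined with (\ref{eqn-partial-r-D-anti-commute}), $\D\varphi=\psi$ is equivalent to the ODE family $u_j'+\tfrac{1}{r}(\tfrac{n-1}{2}+\lambda_j)u_j=v_j$ on $(0,2r_0)$, which after $r=e^s$ with $\tu_j(s):=u_j(e^s)$ and $\tv_j(s):=e^s v_j(e^s)$ becomes $\tu_j'+a_j\tu_j=\tv_j$ on $I:=(-\infty,\ln(2r_0)]$ with $a_j=\tfrac{n-1}{2}+\lambda_j$. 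A direct change of variables using $d\vol_{\g}=r^{n-1}\,dr\,d\vol_{g^N}$ and the fact that $\pr\cdot$ is a fibrewise isometry yields the Parseval identities
$$\|\varphi\|_{L^2_{\delta}(SB_{2r_0})}^2=\sum_j\int_I|\tu_j|^2 e^{-2\delta s}\,ds,\qquad \|\psi\|_{L^2_{\delta-1}(SB_{2r_0})}^2=\sum_j\int_I|\tv_j|^2 e^{-2\delta s}\,ds.$$

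Now set $\phi_j:=e^{-\delta s}\tu_j$ and $\eta_j:=e^{-\delta s}\tv_j$; the ODE linearises to $\phi_j'+c_j\phi_j=\eta_j$. Define
$$\phi_j(s):=\int_{-\infty}^s e^{c_j(t-s)}\eta_j(t)\,dt\ \text{ if }c_j>0,\qquad \phi_j(s):=-\int_s^{\ln(2r_0)}e^{c_j(t-s)}\eta_j(t)\,dt\ \text{ if }c_j<0.$$
In either case $\phi_j$ is convolution of $\eta_j$ with a one-sided exponential kernel on $I$ whose $L^1(I)$-norm equals $1/|c_j|$, so Young's inequality gives $\|\phi_j\|_{L^2(I)}\leq|c_j|^{-1}\|\eta_j\|_{L^2(I)}$. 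Since $N$ is compact the spectrum of $D^{g^N}$ is discrete with $|\lambda_j|\to\infty$, hence $|c_j|\to\infty$; the finitely many $c_j$ with bounded $|\lambda_j|$ are bounded away from $0$ by noncriticality of $\delta$, so $C_0:=\sup_j|c_j|^{-1}<\infty$. Setting $G\psi:=\sum_j\tu_j\varphi_j$ then yields a spinor in $L^2_{\delta}(SB_{2r_0})$ with $\|G\psi\|_{L^2_{\delta}}\leq C_0\|\psi\|_{L^2_{\delta-1}}$ by Parseval, and $\D(G\psi)=\psi$ by construction. To upgrade to $W^{1,2}_{\delta}$, apply Proposition~\ref{prop-weight-elliptic-estimate} (whose scaling proof localises near the conical tip and so applies on $B_{2r_0}$):
$$\|G\psi\|_{W^{1,2}_{\delta}(SB_{2r_0})}\leq C\bigl(\|\D G\psi\|_{L^2_{\delta-1}}+\|G\psi\|_{L^2_{\delta}}\bigr)\leq C'\|\psi\|_{L^2_{\delta-1}},$$
which is the claimed boundedness.

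The main obstacle is the uniform bound on $|c_j|^{-1}$ across the full Dirac spectrum of $N$: noncriticality of $\delta$ eliminates the resonance $c_j=0$, while compactness of $N$ confines the delicate modes to a finite set that noncriticality handles simultaneously. A secondary technical point is that Proposition~\ref{prop-weight-elliptic-estimate}, though stated for the full AF manifold, must be invoked on the bounded set $B_{2r_0}$; since its scaling proof proceeds annulus by annulus near the tip, it transfers verbatim.
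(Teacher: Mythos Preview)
Your proof is correct and takes a more direct route than the paper's. Both arguments reduce $\D\varphi=\psi$ to the same family of scalar ODEs via spectral decomposition on the cross section (exactly as set up in the proof of Lemma~\ref{lem-refined-weighted-elliptic-estimate}), but they diverge in how they solve those ODEs. The paper proceeds by a soft limiting argument: it solves $\D\varphi_r=\psi$ on exhausting annuli $B_{2r_0}\setminus B_r$ with a boundary condition, derives the uniform $L^2_\delta$ bound from the integration-by-parts identity preceding \eqref{ellest} together with Cauchy--Schwarz, and then extracts a limit $\varphi$ via Rellich compactness and a diagonal argument; well-definedness of $G$ is checked afterward by yet another ODE analysis of the homogeneous equation. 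You instead write down the Green kernel for each mode explicitly, choosing the one-sided exponential with support dictated by the sign of $c_j=\delta-\nu_j$, and invoke Young's inequality. This is constructive, avoids the compactness step entirely, and produces the sharp mode-by-mode constant $1/|c_j|$ (hence the global constant $C_0=\sup_j|c_j|^{-1}$). The paper's approach packages the passage to $W^{1,2}_\delta$ together with boundary behaviour near $\partial B_{2r_0}$ via the Dirichlet construction; in yours, the localisation of Proposition~\ref{prop-weight-elliptic-estimate} to $B_{2r_0}$---which you correctly flag---needs a one-line supplement by standard (unweighted) elliptic regularity near the smooth boundary $\partial B_{2r_0}$, exactly as the paper itself remarks when invoking that proposition.
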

\begin{proof}
  We define $G$ as following. 
  
  We pick a spinor $\psi \in L^{2}_{\delta-1}(SB_{2r_0})$ .  Given $r<<2r_{0}$, we can use standard elliptic theory to solve the equation $\overline{D} \varphi_{r} = \psi$ in $L^{2}(S(B_{2r_{0}}\backslash B_{r}))$, with Dirichlet boundary condition. Adapting the proof of Lemma \ref{lem-refined-weighted-elliptic-estimate} (on $B_{2r_{0}}\backslash B_{r}$ and with $\chi=1$, more specifically using the equation immediately before \eqref{ellest} and using Cauchy-Schwarz), we obtain
  \begin{equation}\label{eqn-solve-u=f}
    \|\varphi_{r}\|_{L^2_{\delta}(S(B_{2r_{0}}\backslash B_{r}))}\leq c\|\psi\|_{L^2_{\delta-1}(S(B_{2r_{0}}\backslash B_{r}))},
  \end{equation}
  with $c$ independent of $r$. Elliptic regularity then bounds the $W^{1,2}$ norm of $\varphi_{r}$ over compact subsets in terms of $\|\psi\|_{L^2_{\delta-1}}$, so that we can use Rellich theorem and a diagonal argument to extract a sequence $\varphi_{r}$ converging to a spinor $\varphi$ in $L^2_{loc}$, with $\D \varphi = \psi$ and $\varphi=0$ on $\partial B_{2r_{0}}$. Taking a limit in (\ref{eqn-solve-u=f}), one gets
  \[\|\varphi\|_{L^2_{\delta}(SB_{2r_{0}})}\leq c\|\psi\|_{L^2_{\delta-1}(SB_{2r_{0}})}.\]
  From the Proposition \ref{prop-weight-elliptic-estimate} and standard elliptic arguments near $\partial B_{2r_{0}}$, we deduce an estimate on the derivatives:
  \begin{equation}\label{eqn-solve-estimate-u}
    \|\varphi\|_{W^{1,2}_{\delta}(SB_{2r_{0}})}\leq c\|\psi\|_{L^2_{\delta-1}(SB_{2r_{0}})}.
  \end{equation}

  Finally, we show that such spinor $\varphi$ is uniquely defined, i.e. independent of the choice of extracted sequence. The difference $\bar{\varphi}$ between two such spinors $\varphi$ solves the Dirac equation $\overline{D} \bar{\varphi}=0$ and vanishes on $\partial B_{2r_{0}}$. Then by doing the spectral decomposition as in \eqref{def-spectral-decomposition}, 
  the equation $\overline{D} \bar{\varphi} = 0$ reduces to ODEs \eqref{eqn-spectral-decomposition} with $v_j=0$. 
  Together with the vanishing condition, one quickly deduces that $\bar{\varphi}=0$. We can therefore set $G \psi := \varphi$.
\end{proof}


A perturbation argument extends this result to a more general setting.
\begin{prop}\label{prop-G-g}
  Given a noncritical $\delta$ and a small number $r_{0}$, we can define a bounded operator $G_{g}:L^{2}_{\delta-1}(SB_{2r_{0}})\mapsto W^{1,2}_{\delta}(SB_{2r_{0}})$ such that $D\circ G_{g}=id$.
\end{prop}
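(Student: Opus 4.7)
The plan is to construct $G_g$ as a Neumann-series perturbation of the model right inverse $G$ from \lemref{lemma-G}. Write $D = \D + E$ on $B_{2r_0}$, where $E := D - \D$. Once a common identification of the spinor bundles for $g$ and $\g$ is fixed (for instance via the Bourguignon--Gauduchon construction that transports the $\g$-orthonormal frame bundle to the $g$-orthonormal frame bundle), the difference $E$ is a first-order differential operator on spinors whose coefficients are polynomial expressions in $h = g - \g$ and $\on h$. Using the asymptotic control $|\on^k h|_{\g} = O(r^{\alpha - k})$ for $k = 0, 1$ from \defref{defn-conic-mfld}, $E$ has the schematic form
\begin{equation*}
E \varphi = a(x) \ast \on \varphi + b(x) \ast \varphi, \qquad |a(x)|_{\g} \leq C r^{\alpha}, \qquad |b(x)|_{\g} \leq C r^{\alpha - 1},
\end{equation*}
where $\ast$ denotes a bounded algebraic contraction involving Clifford multiplication.

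Next, the weighted norms are tailored to convert this extra factor of $r^\alpha$ into a small operator norm on a small ball. For the first-order part,
\begin{equation*}
\int_{B_{2r_0}} r^{-2(\delta-1) - n} |a \ast \on \varphi|^2 \, d\vol_{\g} \leq C \int_{B_{2r_0}} r^{2\alpha} \, r^{-2(\delta-1) - n} |\on \varphi|^2 \, d\vol_{\g} \leq C r_0^{2\alpha} \|\varphi\|_{W^{1, 2}_{\delta}(SB_{2r_0})}^2,
\end{equation*}
and for the zeroth-order part, $|b|^2_{\g} \leq C r^{2\alpha - 2}$ combined with the weight $r^{-2\delta - n}$ built into the $W^{1, 2}_\delta$-norm yields a bound of the same form. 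Therefore
\begin{equation*}
\|E\|_{W^{1,2}_\delta(SB_{2r_0}) \to L^2_{\delta - 1}(SB_{2r_0})} \leq C r_0^{\alpha}.
\end{equation*}
Because the bound on $\|G\|$ from \lemref{lemma-G} is governed only by the distance of $\delta$ from the critical set and can be taken uniform in $r_0$ (the model cone is scale invariant and the weights respect this scaling), we conclude that $\|E \circ G\|_{L^2_{\delta - 1} \to L^2_{\delta - 1}} \leq C r_0^\alpha$, which is strictly less than $1$ once $r_0$ is sufficiently small.

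With this smallness, on $L^2_{\delta - 1}(SB_{2r_0})$ one has $D \circ G = \mathrm{id} + E \circ G$, so $\mathrm{id} + E \circ G$ is invertible by a Neumann series, and setting
\begin{equation*}
G_g := G \circ (\mathrm{id} + E \circ G)^{-1}
\end{equation*}
gives the desired bounded right inverse $G_g : L^2_{\delta - 1}(SB_{2r_0}) \to W^{1, 2}_\delta(SB_{2r_0})$ of $D$. The main technical obstacle is verifying the precise schematic form of the difference operator $E$ on a common bundle, namely identifying the spin structures of $g$ and $\g$ in a way compatible with the Levi-Civita connections and tracing $h$ through the Clifford multiplication and the spin connection. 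Everything else is soft Banach space machinery, made possible by the sharp weighted elliptic theory and the model inverse already developed.
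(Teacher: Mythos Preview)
Your proof is correct and follows essentially the same perturbation strategy as the paper: both show that $D-\D$ is small in operator norm as $r_0\to 0$ and then use a Neumann series to perturb the model inverse $G$ into a right inverse for $D$. The only cosmetic difference is that the paper inverts $\mathrm{id}+G(D-\D)$ on $W^{1,2}_\delta$ and sets $G_g=[\mathrm{id}+G(D-\D)]^{-1}G$, whereas you invert $\mathrm{id}+(D-\D)G$ on $L^2_{\delta-1}$ and set $G_g=G[\mathrm{id}+(D-\D)G]^{-1}$; these are the same operator by the standard identity $(I+AB)^{-1}A=A(I+BA)^{-1}$.
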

\begin{proof}
  By using the bounded operator $G$ obtained in Lemma \ref{lemma-G}, we can write $D=\D[id+G(D-\D)]$. By the asymptotic control assumption of the difference between $g$ and $\overline{g}$ in $(\romannumeral3)$ of Definition \ref{defn-conic-mfld}, we can estimate $\left\|(\D-D)\varphi\right\|_{L^{2}_{\delta}(SB_{2r_{0}})}$ by $\epsilon_{2}(r_{0})\left\|\varphi\right\|_{W^{1,2}_{\delta}(SB_{2r_{0}})}$ for $\epsilon_{2}(r_{0})\to 0$ as $r_{0}\to 0$. Since $G$ is bounded from $L^{2}_{\delta-1}(SB_{2r_{0}})$ to $W^{1,2}_{\delta}(SB_{2r_{0}})$, we deduce that $G(D-\D)$ defines a bounded operator on $W^{1,2}_{\delta}(SB_{2r_{0}})$, whose norm goes to $0$ as $r_{0}$ approaches $0$. So for a sufficiently small $r_0$, $id+G(D-\D)$ is an automorphism of $W^{1,2}_{\delta}(SB_{2r_{0}})$ and $G_{g}=[id+G(D-\D)]^{-1}G$ is a bounded operator from $L^{2}_{\delta-1}(SB_{2r_{0}})$ to $W^{1,2}_{\delta}(SB_{2r_{0}})$, with $DG_{g}=G_{g}D=id$.
\end{proof}


As an application of Proposition $\ref{prop-G-g}$, we obtain harmonic spinors near the conical point, with certain prescribed asymptotic. Let $E_j$ be the eigenspace of the Dirac operator $D^{g^N}$ on the cross section $N$ of the model cone, corresponding to eigenvalues $\lambda_j$. 

\begin{cor}\label{cor-harmonic-spinor-on-cone}
Given $j\in \mathbb{N}$ and $\phi\in E_{j}$, there are spinors $\mathcal{H}_{j,\phi}$ that are harmonic, that is $D \mathcal{H}_{j, \phi} = 0$, near the conical point and have the form \[\mathcal{H}_{j,\phi}=r^{\nu_{j}}\phi+ \psi\]
with $ \psi $ in $W^{1,2}_{\eta}$ for any $\eta< \nu_j + \alpha := -\lambda_{j} - \frac{n-1}{2} + \alpha $. Here $\alpha$ is the decay order of the metric $g$ near the conical point as in Definition $\ref{defn-conic-mfld}$.
\end{cor}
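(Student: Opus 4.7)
\textbf{Proof proposal for Corollary \ref{cor-harmonic-spinor-on-cone}.}

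The plan is to first construct an exact harmonic spinor for the model cone Dirac operator $\overline{D}$ with the desired leading behavior, and then perturb it to a harmonic spinor for the actual Dirac operator $D$ by solving an inhomogeneous equation via the operator $G_g$ of Proposition \ref{prop-G-g}. The key observation is that $r^{\nu_j}\phi$ is an exact $\overline{D}$-harmonic spinor on the model cone: writing $\varphi = u(r)\phi$ with $\phi \in E_j$ and expanding $\overline{D}\varphi$ using \eqref{eqn-Dirac-op-relation}, the equation $\overline{D}\varphi=0$ reduces to the ODE
\[
u'(r) + \frac{(n-1)/2 + \lambda_j}{r}\,u(r) = 0,
\]
whose solution $u(r) = r^{-(n-1)/2 - \lambda_j} = r^{\nu_j}$ gives the desired homogeneous harmonic spinor.

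Next I would fix a smooth cutoff $\chi$ supported in the conical neighborhood $U_o$ with $\chi \equiv 1$ on $B_{r_0}(o)$ for some small $r_0$, and set $\varphi_0 = \chi \, r^{\nu_j}\phi$. Then near the singular point we have
\[
D\varphi_0 = (D - \overline{D})(r^{\nu_j}\phi) + (\text{terms supported where } d\chi \neq 0),
\]
and the second contribution is smooth and compactly supported away from $o$, so it lies in every weighted space at the cone tip. For the first contribution, since $D - \overline{D}$ is a first-order operator whose coefficients are built algebraically from $h$ and $\overline{\nabla}h$, the bounds $|\overline{\nabla}^k h|_{\overline{g}} = O(r^{\alpha-k})$ from Definition \ref{defn-conic-mfld} imply the pointwise estimate
\[
\bigl|(D - \overline{D})(r^{\nu_j}\phi)\bigr| = O(r^{\nu_j + \alpha - 1}),
\]
so $D\varphi_0 \in L^2_{\eta'-1}(SB_{2r_0})$ for every $\eta' < \nu_j + \alpha$. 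Picking such an $\eta'$ that is in addition noncritical at the conical point (which is possible since the critical values form a discrete set), I would apply Proposition \ref{prop-G-g} to obtain $\psi' = -G_g(D\varphi_0) \in W^{1,2}_{\eta'}(SB_{2r_0})$ satisfying $D\psi' = -D\varphi_0$. Setting $\mathcal{H}_{j,\phi} = \varphi_0 + \psi'$ then gives a spinor with $D\mathcal{H}_{j,\phi} = 0$ on $B_{r_0}(o)$; and on that neighborhood $\chi = 1$, so $\mathcal{H}_{j,\phi} = r^{\nu_j}\phi + \psi$ with $\psi := \psi' \in W^{1,2}_{\eta'} \subset W^{1,2}_{\eta}$ for any prescribed $\eta < \nu_j + \alpha$.

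The main obstacle I anticipate is verifying the decay rate $|(D - \overline{D})(r^{\nu_j}\phi)| = O(r^{\nu_j + \alpha - 1})$ cleanly. This requires writing down how the Clifford multiplication and spin connection transform when the metric changes from $\overline{g}$ to $g = \overline{g} + h$, and checking that every resulting term gains an extra $r^{\alpha}$ factor relative to the model-cone expression $\overline{D}(r^{\nu_j}\phi) = 0$. Once this perturbative bound is in hand, the rest of the argument is essentially a bookkeeping exercise: the choice of noncritical $\eta'$ just below $\nu_j + \alpha$, the invocation of Proposition \ref{prop-G-g}, and the monotonicity $W^{1,2}_{\eta'} \subset W^{1,2}_{\eta}$ for $\eta \leq \eta'$ complete the construction.
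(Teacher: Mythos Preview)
Your proposal is correct and follows essentially the same route as the paper: both show $\overline{D}(r^{\nu_j}\phi)=0$, estimate $D(r^{\nu_j}\phi)=(D-\overline{D})(r^{\nu_j}\phi)$ to lie in $L^2_{\eta'-1}$ for $\eta'<\nu_j+\alpha$, and then invoke Proposition~\ref{prop-G-g} to produce the correction $\psi$. The only cosmetic differences are that the paper applies the cutoff $\chi$ \emph{after} solving on $B_{2r_0}$ rather than before, and it packages your pointwise bound as the operator statement ``$D-\overline{D}:W^{1,2}_{\delta}\to L^2_{\delta-1+\alpha}$ is bounded'' (which resolves precisely the obstacle you flag in your last paragraph).
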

\begin{proof}
  For noncritical $\delta < -\lambda_j - \frac{n-1}{2}$, one can easily check that $\overline{D}(r^{\nu_j}\phi) = 0$ and $r^{\nu_j} \phi \in W^{1, 2}_\delta$. Because the operator $D - \overline{D}: W^{1, 2}_{\delta} \rightarrow L^2_{\delta - 1 + \alpha}$ is bounded, we further have $D(r^{\nu_j} \phi) \in L^2_{\delta-1+\alpha}$. Then Proposition \ref{prop-G-g} implies that there exists $\varphi \in W^{1, 2}_{\delta + \alpha}(SB_{2r_0})$ for some small $r_0>0$ such that $D \varphi  = D(r^{\nu_j}\phi)$. Now we set $\mathcal{H}_{j, \phi}:=\chi \left( r^{\nu_j}\phi + \varphi \right)$ for some smooth cut-off function $\chi$ which vanishes on $B^c_{2r_0}$ and is equal to 1 on $B_{r_0}$. This $\mathcal{H}_{j, \phi}$ satisfies requirements in the conclusion.
\end{proof}


Now we derive a decay jump property for solutions to $D \varphi = \psi$ near the conical point, following the arguments in Lemma 5 and Proposition 4 in \cite{Minerbe-CMP} that is decay jump property at infinity. This decay jump property is critical ingredient to extend the range of noncritical indices $(\delta, \beta)$, for which the map $D_{\delta, \beta}$ is surjective, from Proposition $\ref{prop-surjectivity}$ to Proposition $\ref{prop-surjectivity-1}$.

We first work on the the Dirac operator $\overline{D}$ of the model cone metric $\overline{g}$.

\begin{lem}\label{lem-decay-jump-for-Dbar}
  Suppose $\D \varphi = \psi $ with $\varphi$ in $L_{\delta}^2(SB_{2r_{0}})$ and $\psi$ in $L^{2}_{\delta'-1}(SB_{2r_{0}})$ for non-critical exponents $\delta<\delta'$ and a small number $r_{0}$. Then there is an element $\varphi'$ of $L^{2}_{\delta'}(SB_{2r_{0}})$ such that $\varphi - \varphi'$ is a linear combination of the following functions:
  \[r^{\nu_{j}}\phi_{j} \ \ \;\text{with}\; \phi_{j}\in E_{j}\; \text{and}\; \delta<  \nu_j = -\lambda_{j} - \frac{n-1}{2}<\delta'.\]
\end{lem}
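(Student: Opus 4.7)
The natural approach is to mirror the ODE-based analysis of Lemma~\ref{lem-refined-weighted-elliptic-estimate} and the corresponding decay-jump arguments in \cite{Minerbe-CMP}, exploiting the product structure of the cone to decouple the equation into a family of first-order ODEs on $(-\infty,0)$. Concretely, I would spectrally decompose $\varphi = \sum_j u_j(r)\varphi_j$ and $\psi = \sum_j v_j(r)\partial_r \cdot \varphi_j$ using the eigenspinor basis $\{\varphi_j\}$ of $D^{g^N}$ as in \eqref{def-spectral-decomposition}, then change variables $r = e^s$, $\tilde u_j(s) = u_j(e^s)$, $\tilde v_j(s) = e^s v_j(e^s)$. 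As in \eqref{eqn-spectral-decomposition1}, the equation $\overline D \varphi = \psi$ decouples into
\[
\tilde u_j'(s) + c_j\,\tilde u_j(s) = \tilde v_j(s), \qquad c_j := \tfrac{n-1}{2} + \lambda_j = -\nu_j,
\]
and Parseval on the cross section gives $\|\varphi\|_{L^2_{\delta}}^2 = \sum_j \int_{-\infty}^{0} |\tilde u_j(s)|^2 e^{-2\delta s}\,ds$, with the analogous identity for $\psi$ at weight $\delta'-1$ (the shift by $-1$ reflecting the derivative).

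Each ODE is solved by an integrating factor, yielding
\[
\tilde u_j(s) = e^{\nu_j s}\Bigl[A_j + \int_{s_*}^{s} e^{-\nu_j t}\tilde v_j(t)\,dt\Bigr],
\]
and the analysis then depends on the position of $\nu_j$ relative to $\delta < \delta'$. Since $\{\lambda_j\}$ is discrete and unbounded on the closed manifold $N$ by Weyl's law, only finitely many indices satisfy $\delta < \nu_j < \delta'$. When $\nu_j > \delta'$, I take $s_* = -\infty$; the resulting particular solution lies in $L^2(e^{-2\delta' s}\,ds)$ by a weighted Hardy-type inequality applied to $\frac{d}{ds}+c_j$ at the non-critical weight $\delta'$, while any homogeneous contribution $A_j e^{\nu_j s}$ is automatically in $L^2_{\delta'}$. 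When $\nu_j < \delta$, the homogeneous solution already fails to lie in $L^2_{\delta}$, so the hypothesis $\varphi \in L^2_{\delta}$ pins down a unique constant, and the same Hardy estimate at weight $\delta'$ handles the particular part. In the middle range $\delta < \nu_j < \delta'$, I write $\tilde u_j = A_j e^{\nu_j s} + \tilde u_j^{\mathrm{part}}$ with $\tilde u_j^{\mathrm{part}}$ chosen in $L^2(e^{-2\delta' s}\,ds)$; the leftover homogeneous term translates back to $A_j r^{\nu_j}\phi_j$ with $\phi_j\in E_j$, exactly matching the advertised conclusion.

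Setting $\varphi'$ to be the sum over all modes of the parts placed in $L^2_{\delta'}$, the difference $\varphi-\varphi'$ is by construction the finite linear combination $\sum_{\delta < \nu_j < \delta'} A_j r^{\nu_j}\phi_j$. The main technical point to check is that the mode-by-mode Hardy estimates come with constants uniform enough in $j$ to allow summation in $L^2_{\delta'}$; this is essentially the content of the estimate leading to \eqref{ellest} in Lemma~\ref{lem-refined-weighted-elliptic-estimate}, now applied on the full half-line $(-\infty,0)$ and tracked mode by mode via the distance from $\nu_j$ to $\{\delta,\delta'\}$. I expect the principal obstacle to be careful bookkeeping at the boundary $s=0$ (where auxiliary cutoffs introduce finite-energy error terms supported on a compact annulus, harmlessly absorbed into $\varphi'$) rather than a genuinely new analytical ingredient.
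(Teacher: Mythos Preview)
Your approach is sound but takes a more hands-on route than the paper. Rather than decomposing both $\varphi$ and $\psi$ mode by mode and solving each ODE explicitly with Hardy-type estimates, the paper simply invokes the right-inverse $G$ of Lemma~\ref{lemma-G}: since $\psi \in L^2_{\delta'-1}$, the spinor $\tilde\varphi := G\psi$ already lies in $W^{1,2}_{\delta'}$ and satisfies $\overline D\tilde\varphi = \psi$, so the difference $\omega = \varphi - \tilde\varphi$ is $\overline D$-harmonic and belongs to $L^2_\delta$. Its spectral components are then \emph{pure} homogeneous modes $r^{\nu_j}\phi_j$ with $\nu_j > \delta$, and one just sorts them according to whether $\nu_j > \delta'$ (absorbed into $\varphi'$) or $\delta < \nu_j < \delta'$ (the asserted finite linear combination). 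This sidesteps all of your uniform-in-$j$ bookkeeping, because Lemma~\ref{lemma-G} has already packaged those ODE estimates. Your direct method also works and is self-contained, but note one slip: for $\nu_j > \delta'$ the integral $\int_{-\infty}^{s} e^{-\nu_j t}\tilde v_j(t)\,dt$ generally diverges, since $e^{-\nu_j t}$ blows up as $t\to -\infty$ faster than membership of $\tilde v_j$ in $L^2(e^{-2\delta' s}\,ds)$ can compensate. You should take $s_*$ at the finite endpoint $s=0$ when $\nu_j > \delta'$ and $s_* = -\infty$ when $\nu_j < \delta$; the conclusion is unaffected, because in the regime $\nu_j > \delta'$ the homogeneous piece $A_j e^{\nu_j s}$ is in $L^2_{\delta'}$ regardless.
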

\begin{proof}
  We will build $\varphi'$ step by step, starting from the solution $\tilde{\varphi}$ of $\D \tilde{\varphi}=\psi$ provided by Lemma \ref{lemma-G}. Note $\tilde{\varphi} \in W^{1, 2}_{\delta'}(SB_{2r_{0}})$. The difference $\omega=\varphi-\tilde{\varphi}$ satisfies $\overline{D} \omega = 0$. We do the eigenspinor decomposition for $\omega$ with respect to the Dirac operator on the cross section $(N, g^N)$, and use $w_j$ to denote the component in the eigenspace $E_j$. The equation $\D \omega_{j}=0$ then implies $\omega_{j}=r^{\nu_{j}}\phi_{j}$ with $\phi_{j}$ in $E_{j}$. Observing that $r^{\nu_{j}} \phi_j\in W^{1, 2}_{\eta}\Leftrightarrow \eta< \nu_j$, one sees that each term is either in $W^{1, 2}_{\delta'}$, so that we can add it to $\tilde{\varphi}$ and forget it, or satisfies the conditions in the statement.
\end{proof}

Then we generalize this to the Dirac operator $D$ of an asymptotically conical metric.

\begin{prop}\label{prop-asmptotic-order}
  Suppose $D \varphi = \psi$ with $\varphi$ in $L_{\delta}^2(SB_{2r_{0}})$ and $\psi$ in $L^{2}_{\delta'-1}(SB_{2r_{0}})$ for non-critical exponents $\delta<\delta'$ . Then, up to making $r_0$ smaller, 
  there is an element $\varphi'$ of $L^{2}_{\delta'}(SB_{2r_{0}})$ such that $\varphi - \varphi'$ is a linear combination of the following spinors:
  
  \[\mathcal{H}_{j,\phi_{j}} \;\text{with}\;  \phi_{j} \in E_{j} \;\text{and}\;  \delta<-\lambda_{j}+\frac{1-n}{2}<\delta',\]
  where $\mathcal{H}_{j, \phi_j}$ is a spinor, harmonic near the conical point, obtained in Corollary $\ref{cor-harmonic-spinor-on-cone}$.
\end{prop}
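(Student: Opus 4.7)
The plan is a perturbative iteration that bootstraps decay in increments of $\alpha$. Writing $\overline{D} = D - (D - \overline{D})$ and using the $O(r^\alpha)$ gain in $D - \overline{D}$ coming from $(iii)$ of Definition~\ref{defn-conic-mfld}, I can reduce $D\varphi = \psi$ to a model cone equation $\overline{D}\varphi = \tilde{\psi}$ with slightly better right-hand side, apply Lemma~\ref{lem-decay-jump-for-Dbar} to extract the $r^{\nu_j}\phi_j$ modes, and then upgrade these to genuine $D$-harmonic spinors $\mathcal{H}_{j,\phi_j}$ furnished by Corollary~\ref{cor-harmonic-spinor-on-cone}.

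First I would verify that $\varphi \in W^{1,2}_\delta(SB_{2r_0})$, using Proposition~\ref{prop-weight-elliptic-estimate} together with the inclusion $L^2_{\delta'-1} \subset L^2_{\delta-1}$. The perturbation bound employed in the proof of Proposition~\ref{prop-G-g} then yields $(D - \overline{D})\varphi \in L^2_{\delta - 1 + \alpha}$, so
\[
\overline{D}\varphi \;=\; \psi - (D - \overline{D})\varphi \;\in\; L^2_{\delta_1 - 1},
\]
where $\delta_1 := \min(\delta',\, \delta + \alpha)$, perturbed infinitesimally to stay non-critical if necessary. Lemma~\ref{lem-decay-jump-for-Dbar} then provides a decomposition $\varphi = \varphi^{(1)} + \sum_{\delta < \nu_j < \delta_1} c_j r^{\nu_j}\phi_j$ with $\varphi^{(1)} \in L^2_{\delta_1}$. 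Since Corollary~\ref{cor-harmonic-spinor-on-cone} shows $\mathcal{H}_{j,\phi_j} - r^{\nu_j}\phi_j \in W^{1,2}_\eta$ for any $\eta < \nu_j + \alpha$, and $\nu_j + \alpha > \delta + \alpha \geq \delta_1$ allows the choice $\eta \geq \delta_1$, substituting $\mathcal{H}_{j,\phi_j}$ for $r^{\nu_j}\phi_j$ produces
\[
\tilde{\varphi}^{(1)} \;:=\; \varphi - \sum_{\delta < \nu_j < \delta_1} c_j\, \mathcal{H}_{j,\phi_j} \;\in\; L^2_{\delta_1}.
\]
After shrinking $r_0$ once so that every $\mathcal{H}_{j,\phi_j}$ appearing is simultaneously $D$-harmonic on $B_{r_0}$, this new spinor still satisfies $D\tilde{\varphi}^{(1)} = \psi$ there.

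I would then iterate: applying the same argument to $\tilde{\varphi}^{(k-1)}$ with starting weight $\delta_{k-1}$ yields $\tilde{\varphi}^{(k)} \in L^2_{\delta_k}$ with $\delta_k := \min(\delta',\, \delta_{k-1} + \alpha)$, up to tiny non-critical perturbations. The indicial roots extracted at step $k$ lie in $(\delta_{k-1}, \delta_k)$, hence are disjoint from those extracted at other steps, and concatenating all corrections produces a single linear combination over $\nu_j \in (\delta, \delta')$. The process terminates in at most $\lceil (\delta' - \delta)/\alpha \rceil$ steps, giving the claimed decomposition.

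The main technical issue is the bookkeeping of this ladder of auxiliary exponents: each $\delta_k$ must be chosen non-critical so that Lemma~\ref{lem-decay-jump-for-Dbar} and the weighted elliptic estimates apply, which is possible because the critical set is discrete. One also has to shrink $r_0$ finitely many times so that all of the finitely many $\mathcal{H}_{j,\phi_j}$ are $D$-harmonic on a common ball compatible with the $r_0$ implicit in Corollary~\ref{cor-harmonic-spinor-on-cone}. Both are routine finite adjustments; the genuine content is the $r^\alpha$-gain from the asymptotic conical assumption, which makes the bootstrap strictly improving at each iteration.
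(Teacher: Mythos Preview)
Your proposal is correct and follows essentially the same route as the paper: reduce to the model operator via the $O(r^\alpha)$ bound on $D-\overline{D}$, apply Lemma~\ref{lem-decay-jump-for-Dbar} to peel off the $r^{\nu_j}\phi_j$ terms, upgrade them to $\mathcal{H}_{j,\phi_j}$ via Corollary~\ref{cor-harmonic-spinor-on-cone}, and iterate in steps of $\alpha$ until reaching $\delta'$. The only cosmetic difference is that you invoke Proposition~\ref{prop-weight-elliptic-estimate} (which already suffices) to obtain $\varphi\in W^{1,2}_\delta$, whereas the paper cites Proposition~\ref{prop-refined-weighted-elliptic-estimate}; your bookkeeping of the non-critical adjustments and the finite shrinking of $r_0$ is also a bit more explicit than the paper's.
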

\begin{proof}
    By using Proposition \ref{prop-refined-weighted-elliptic-estimate}, from the equation $D\varphi = \psi$, with $\varphi \in L^2_{\delta}$ and $\psi \in L^2_{\delta'-1}$, we have $\varphi \in W^{1,2}_{\delta}$. By the fact that the operator $D - \overline{D}: W^{1, 2}_\delta(SB_{2r_0}) \rightarrow L^2_{\delta - 1 + \alpha}$ is bounded, we further have
    \[\overline{D}\varphi = D \varphi + (\overline{D} - D) \varphi \in L^2_{\delta'-1}+L^2_{\delta+\alpha-1}.\]
    So if we pick any noncritical $\eta\leq \min\left\{\delta',\delta+\alpha\right\}$, we have $\overline{D}\varphi \in L^{2}_{\eta-1}$. Lemma \ref{lem-decay-jump-for-Dbar} then implies that $\varphi$ admits a decomposition
    \[\varphi =\varphi_{1}+\sum_{j}r^{\nu_{j}}\phi_{j},\]
    where $\varphi_{1}\in L^{2}_{\eta}$, and the second term is a sum of finitely many terms with $j$ such that $\delta< \nu_j = -\lambda_{j} - \frac{n-1}{2}<\eta$. By applying Corollary $\ref{cor-harmonic-spinor-on-cone}$, we can further write
    \[\varphi=\varphi_{2}+\sum_{j}\mathcal{H}_{j,\phi_{j}},\]
    where $\varphi_{2}\in L^{2}_{\eta}$, $\delta<-\lambda_{j}+\frac{1-n}{2}<\eta$.
    
    If $\delta+\alpha\geq \delta'$, we are done. 
    
    If not, $D\varphi_{2}=\psi$ near the conical part and $\varphi_{2}\in L^2_{\eta}$. So we can repeat the argument with $\varphi_{2}$ in the role of $\varphi$ and $\eta$ in the role of $\delta$. In a finite number of steps, we are in the first case.
\end{proof}


Finally, we derive the surjectivity of $D_{\delta, \beta}$ for certain noncritical indices $(\delta, \beta)$, which enables us to solve the equation $D \varphi = \psi$.

\begin{prop}\label{prop-surjectivity-1}
Let $(M^n, g, o)$, $n \geq 3$, be an AF spin manifold with a single conical singularity at $o$. If the scalar curvature ${\rm Scal}_g \geq 0$, then for any noncritical $(\delta, \beta)$ satisfying $\delta \leq -\frac{n}{2}$ and $\beta \geq -\frac{n}{2}$,
the map
\be
D_{\delta, \beta}: {\rm Dom}\left( D_{\delta, \beta} \right) \rightarrow L^2_{\delta -1, \beta -1}(SM)
\ee
is surjective.

\end{prop}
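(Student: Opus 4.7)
The plan is to use duality combined with Witten's integration-by-parts trick. By the Fredholm property established in Proposition~\ref{prop-Fredholm}, $D_{\delta,\beta}$ has finite-dimensional cokernel, which under the $L^2$ pairing is identified with $\ker((D_{\delta,\beta})^*)$. The adjoint is the Dirac operator acting out of $L^2_{\delta^*,\beta^*}(SM)$ with $\delta^*:=-\delta-n+1\geq 1-\tfrac{n}{2}$ and $\beta^*:=-\beta-n+1\leq 1-\tfrac{n}{2}$. So surjectivity reduces to showing that every $\psi\in L^2_{\delta^*,\beta^*}(SM)$ with $D\psi=0$ distributionally vanishes identically; in particular $\psi$ is smooth on $M\setminus\{o\}$ by standard elliptic regularity.

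Next, I would control $\psi$ asymptotically at both ends. Near the cone, iterating Proposition~\ref{prop-asmptotic-order} with right-hand side $0$ produces, for any non-critical $\eta$, a decomposition $\psi=\sum_{\delta^*<\nu_j<\eta}\mathcal{H}_{j,\phi_j}+\psi'$ with $\psi'\in L^2_\eta$. The eigenvalue bound $|\lambda_j|\geq\tfrac{n-1}{2}$ from Lemma~\ref{lem-cross-section-eigenvalue-lower-bound} places $\nu_j=-\tfrac{n-1}{2}-\lambda_j$ in $(-\infty,-(n-1)]\cup[0,\infty)$; since $\delta^*\geq 1-\tfrac{n}{2}>-(n-1)$, only modes with $\nu_j\geq 0$ survive. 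Corollary~\ref{cor-harmonic-spinor-on-cone} then yields $|\psi|=O(1)$ and $|\nabla\psi|=O(r^{-1})$ as $r\to 0$. A completely analogous decay-jump argument at infinity (using the Dirac spectrum of the round sphere encoded in Definition~\ref{defn-critical-infinity}, and crucially the fact that $\beta=-\tfrac{n}{2}$ is non-critical at infinity for $n\geq 3$) pushes $\psi$ strictly past the weight $\beta^*=1-\tfrac{n}{2}$, giving pointwise decay $|\psi|=o(\rho^{1-n/2})$ and $|\nabla\psi|=o(\rho^{-n/2})$.

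With these asymptotic controls in hand I would close the argument via Witten's integration-by-parts trick. Applying the Lichnerowicz formula $D^{2}=\nabla^{*}\nabla+\tfrac{1}{4}\Scal_{g}$ to $\psi$ and using $D\psi=0$, for any truncation $M_{\varepsilon,R}:=\{r>\varepsilon\}\cap\{\rho<R\}$ one has
\[
\int_{M_{\varepsilon,R}}\!\bigl(|\nabla\psi|^{2}+\tfrac{1}{4}\Scal_{g}|\psi|^{2}\bigr)\,dV=\int_{\partial M_{\varepsilon,R}}\!\langle\nabla_{\nu}\psi,\psi\rangle\,dA.
\]
The inner-boundary contribution is bounded by $C\cdot \varepsilon^{-1}\cdot \varepsilon^{n-1}=O(\varepsilon^{n-2})\to 0$ since $n\geq 3$, and the outer-boundary contribution is $o(R^{n-1}\cdot R^{1-n/2}\cdot R^{-n/2})=o(1)\to 0$. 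Letting $\varepsilon\to 0$ and $R\to\infty$, and using $\Scal_{g}\geq 0$, both the Dirichlet energy $\int|\nabla\psi|^{2}$ and the scalar curvature contribution must vanish. Thus $\nabla\psi\equiv 0$ on $M\setminus\{o\}$; since $|\psi|^{2}$ is then constant and tends to $0$ at infinity, $\psi\equiv 0$, which shows the cokernel is trivial.

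The main obstacle in this plan is verifying that both boundary terms in the Witten identity genuinely vanish. Near the cone this is controlled by the spectral gap in Lemma~\ref{lem-cross-section-eigenvalue-lower-bound}, which eliminates all harmonic modes with indicial root above $1-\tfrac{n}{2}$ except those that are bounded; at infinity the borderline weight $\beta^*=1-\tfrac{n}{2}$ would only give an $O(1)$ boundary contribution, and the delicate point is to perform one further decay jump at infinity to strengthen the pointwise decay past this threshold. The non-criticality of $\beta=-\tfrac{n}{2}$ for $n\geq 3$ (checked against Definition~\ref{defn-critical-infinity}) guarantees that this extra step is available.
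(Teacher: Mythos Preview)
Your strategy is sound and would yield a correct proof, but it is considerably more elaborate than the paper's argument. The paper proceeds by a single cutoff-function computation: for any $\varphi\in\ker((D_{\delta,\beta})^*)\subset L^2_{\frac{2-n}{2},\frac{2-n}{2}}(SM)$ one applies Lichnerowicz to $\chi_{r_0,R_0}\varphi$ and obtains
\[
\int_{(B_{2r_0})^c\cap B_{R_0}}|\nabla\varphi|^2 \;\leq\; C\Bigl(\int_{A_{r_0}}r^{-2}|\varphi|^2+\int_{A_{R_0}}\rho^{-2}|\varphi|^2\Bigr),
\]
and the right-hand side tends to zero simply because $\|\varphi\|_{L^2_{\frac{2-n}{2},\frac{2-n}{2}}}<\infty$. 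No decay jumps, no pointwise estimates, and in particular no appeal to the spectral gap of Lemma~\ref{lem-cross-section-eigenvalue-lower-bound} are needed; the annular tails of a convergent integral go to zero automatically.

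By contrast, you invoke Proposition~\ref{prop-asmptotic-order} and its (not explicitly stated in the paper) analogue at infinity, together with an elliptic bootstrapping step to convert the resulting weighted Sobolev control into pointwise bounds, all in order to make the \emph{boundary} integrals on $\partial M_{\varepsilon,R}$ vanish. This works, but it imports the scalar curvature hypothesis twice (once via the spectral gap to locate the conical indicial roots, once via Lichnerowicz) and relies on machinery developed later in the paper for other purposes. The paper's cutoff approach trades the sharp boundary terms for an $\int|d\chi|^2|\varphi|^2$ error that is controlled directly by the weighted $L^2$ norm, which is why it is shorter and more self-contained.
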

\begin{proof}
We will first show that for $\delta\leq - \frac{n}{2}$ and $\beta \geq - \frac{n}{2}$, ${\rm Ker}\left( \left( D_{\delta, \beta} \right)^* \right) = \{0\}$. If we further assume that $(\delta, \beta)$ is noncritical, then by using the Fredholm property obtained in Proposition $\ref{prop-Fredholm}$, the surjectivity of the map $D_{\delta, \beta}$ follows. 

First, we show that ${\rm Ker}\left( \left( D_{\delta, \beta} \right)^* \right) = \{0\}$ 
for any $\delta \leq -\frac{n}{2}$ and $\beta \geq -\frac{n}{2}$. For such $\delta$ and $\beta$, we have
\be
-\delta + 1 -n \geq \frac{2-n}{2}, \ \ \text{and} \ \ -\beta + 1 - n \leq \frac{2-n}{2}.
\ee
Thus
\be
{\rm Dom}\left( \left( D_{\delta, \beta} \right)^* \right) \subset L^{2}_{-\delta + 1 -n, \beta + 1 - n}(SM) \subset L^2_{\frac{2-n}{2}, \frac{2-n}{2}}(SM).
\ee
For every small number $r_0$ and large number $R_0$, we can choose a smooth cut-off function 
\be
0 \leq \chi_{r_0, R_0} \leq 1
\ee
such that
\be
\chi_{r_0, R_0} 
= \begin{cases} 
    0, & \text{on} \ \ B_{r_0}(o) \cup B^c_{2R_0}, \\
    1, & \text{on} \ \ ( B_{2r_0}(o))^c \cap B_{R_{0}},
   \end{cases}
\ee
and
\be
|d \chi_{r_0, R_0}| 
\leq  \begin{cases}
        \frac{10}{r_0}, & \text{on} \ \ A_{r_0}, \\  
        \frac{10}{R_0}, & \text{on} \ \ A_{R_0},
       \end{cases}
\ee
where $A_{r_0}:= B_{2r_0}(o) \setminus B_{r_{0}}(o)$ and $A_{R_0}:= B_{2R_0}(o) \setminus B_{R_0}(o)$.
Then for $\varphi \in {\rm Ker}\left( \left( D_{\delta, \beta} \right)^* \right) \subset L^{2}_{\frac{2-n}{2}, \frac{2-n}{2}}(SM)$, by doing integration by parts and using Lichnerowicz-Bochner formula, we have
\begin{eqnarray*}
\int_{M} |  \nabla (\chi_{r_0, R_0} \varphi)|^2 
& = &  \int_M |d\chi_{r_0, R_0}|^2 |\varphi|^2 + \int_{M} \left( \chi_{r_0, R_0} \right)^2 \langle \varphi, D^2 \varphi \rangle - \frac{1}{4} \int_M \left(\chi_{r_0, R_0}\right)^2 {\rm Scal}_g |\varphi|^2 \\
& \leq & \int_M |d\chi_{r_0, R_0}|^2 |\varphi|^2 + \int_{M} \left( \chi_{r_0, R_0} \right)^2 \langle \varphi, D^2 \varphi \rangle
\end{eqnarray*}
implies
\begin{eqnarray}
\int_{(B_{2r_0}(o))^c \cap B_{R_0}} |\nabla \varphi|^2 
& \leq &  C \left( \int_{A_{r_0}} |\varphi|^2 r^{-2} + \int_{A_{R_0}} |\varphi|^2 \rho^{-2} \right) \\
& \rightarrow & 0, \ \ \text{as} \ \ r_0 \rightarrow 0, \ \ \text{and} \ \ R_{0} \rightarrow +\infty,
\end{eqnarray}
since 
\be
\| \varphi \|^2_{L^2_{\frac{2-n}{2}, \frac{2-n}{2}}(M)} =  \int_{M}  \left( r^{-2} | \varphi |^2 \chi_1 + \rho^{-2} |\varphi |^2 \chi_2 + | \varphi |^2 \chi_3 \right) d\vol_{g} < \infty.
\ee
Thus $\nabla \varphi =0$, and so $\varphi$ is constant. Then $\varphi \in L^2_{\frac{2-n}{2}, \frac{2-n}{2}}(SM)$ implies $\varphi = 0$, since the improper integral $\int^{+\infty}_{R_0} \rho^{-(2-n)-n} \cdot \rho^{n-1}d\rho = \int^{+\infty}_{R_0}\rho^{n-3}d\rho$ is divergent for $n \geq 3$. This shows:  ${\rm Ker}\left( \left( D_{\delta, \beta} \right)^* \right) = \{0\}$.

Now we show the surjectivity of $D_{\delta, \beta}$ for noncritical $(\delta, \beta)$ satisfying $\delta \leq -\frac{n}{2}$ and $\beta \geq -\frac{n}{2}$, by using ${\rm Ker}\left( \left( D_{\delta, \beta} \right)^* \right) = \{0\}$ and Proposition \ref{prop-Fredholm}. For any fixed $\psi \in C^\infty_0 (SM)$, if 
\be
\left(\psi, D_{\delta, \beta} \varphi \right)_{L^2(SM)} = 0,  \ \ \forall \varphi \in {\rm Dom}\left( D_{\delta, \beta}  \right),
\ee
then
\be
\left( \left( D_{\delta, \beta} \right)^* \psi, \varphi \right)_{L^2(SM)} = 0, \ \ \forall \varphi \in {\rm Dom}\left( D_{\delta, \beta}  \right).
\ee
By the density of $C^\infty_0(SM) \subset {\rm Dom}\left( D_{\delta, \beta} \right)$ in $L^2(SM)$, this implies that $\left( D_{\delta, \beta} \right)^* \psi = 0$, and so $\psi = 0$. Therefore, $\left( {\rm Ran}\left( D_{\delta, \beta} \right) \right)^\perp \cap  C^\infty_0(SM)$ = \{0\}. As a result, the density of $C^\infty_0(SM)$ and closeness of ${\rm Ran}\left( D_{\delta, \beta}\right)$ in $L^2_{\delta-1, \beta-1}(SM)$ (Proposition \ref{prop-Fredholm})  implies that ${\rm Ran}\left( D_{\delta, \beta} \right) = L^2_{\delta-1, \beta-1}(SM)$, i.e. $D_{\delta, \beta}$ is surjective.
\end{proof}

\begin{defn}\label{defn-delta_0}
Let $\lambda_{-1}$ denote the largest negative eigenvalue of the Dirac operator $D^{g^N}$. Then we define
\begin{equation}
\delta_0 := \frac{1-n}{2} - \lambda_{-1}.
\end{equation}
\end{defn}
Then the estimate of eigenvalues in Lemma $\ref{lem-cross-section-eigenvalue-lower-bound}$ implies the nonnegativity of $\delta_0$.
\begin{lem}\label{lem-delta_0}
Let $(M^n, g, o)$, $n \geq 3$, be an AF spin manifold with a single conical singularity at $o$ whose scalar curvature $\Scal_g \geq 0$. Then $\delta_0$ defined in Definition \ref{defn-delta_0} is nonnegative. Moreover, $\delta_0 = 0$ if and only if the cross section $(N, g^N)$ admits a real Killing spinor.
\end{lem}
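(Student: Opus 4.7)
The proof is a direct application of the eigenvalue estimate already established in Lemma \ref{lem-cross-section-eigenvalue-lower-bound}. The plan is to simply unpack the definition of $\delta_0$ and quote the cross-sectional Dirac eigenvalue bound.

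First, I would recall that Lemma \ref{lem-cross-section-eigenvalue-lower-bound} shows that under $\Scal_g \geq 0$ near $o$, every eigenvalue $\lambda_j$ of $D^{g^N}$ satisfies $|\lambda_j| \geq \frac{n-1}{2}$. Applied to $\lambda_{-1}$, the largest negative eigenvalue, this gives $\lambda_{-1} \leq -\frac{n-1}{2}$, equivalently $-\lambda_{-1} \geq \frac{n-1}{2}$. Substituting into the definition,
\begin{equation*}
\delta_0 = \frac{1-n}{2} - \lambda_{-1} = -\frac{n-1}{2} - \lambda_{-1} \geq 0,
\end{equation*}
which proves the first assertion.

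For the equality case, $\delta_0 = 0$ is equivalent to $\lambda_{-1} = -\frac{n-1}{2}$, i.e. the lower bound $|\lambda_{-1}| = \frac{n-1}{2}$ is achieved. By the equality statement in Lemma \ref{lem-cross-section-eigenvalue-lower-bound} (which in turn comes from Friedrich's rigidity theorem \cite{Friedrich-80}), this occurs if and only if $(N, g^N)$ admits a real Killing spinor. Conversely, if $(N, g^N)$ admits a real Killing spinor $\phi$ with Killing number $\mu$, then $D^{g^N}\phi = -(n-1)\mu\,\phi$, and the normalization forced by $\Scal_{g^N} \geq (n-1)(n-2)$ together with Friedrich's equality case fixes $|\mu| = \tfrac{1}{2}$, so $\mp\frac{n-1}{2}$ is in the spectrum, giving $\lambda_{-1} = -\frac{n-1}{2}$ and hence $\delta_0 = 0$.

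No step here should present a real obstacle: the hard work is already contained in Lemma \ref{lem-cross-section-eigenvalue-lower-bound}, and this lemma merely repackages that estimate in terms of the indicial exponent $\delta_0$ that will be used later in the analysis. The only subtlety worth noting is that $\lambda_{-1}$ is well-defined (a largest negative eigenvalue exists) because the spectrum of $D^{g^N}$ on the compact manifold $N$ is discrete, unbounded in both directions, and by the estimate $|\lambda_j| \geq \frac{n-1}{2}$ stays bounded away from $0$, so negative and positive eigenvalues are separated.
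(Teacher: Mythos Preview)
Your proposal is correct and follows exactly the approach intended by the paper: the paper does not even write out a proof, it simply states that the eigenvalue estimate in Lemma~\ref{lem-cross-section-eigenvalue-lower-bound} implies the result. Your argument just makes explicit the one-line substitution $\delta_0 = \frac{1-n}{2} - \lambda_{-1} \geq \frac{1-n}{2} + \frac{n-1}{2} = 0$ and the equality case via Friedrich's rigidity, which is precisely what the paper has in mind.
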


Finally, we improve the estimate, in Proposition $\ref{prop-surjectivity-1}$, of the range of indices $\delta$ and $\beta$, for which the map $D_{\delta, \beta}$ is surjective as following.
\begin{prop}\label{prop-surjectivity}
If the scalar curvature ${\rm Scal}_g \geq 0$, then 
\be
D_{\delta, \beta}: {\rm Dom}\left( D_{\delta, \beta} \right) \rightarrow L^{2}_{\delta-1,\beta-1}(SM)
\ee
is surjective for noncritical $\delta\leq \delta_0$ and $\beta\geq 1-n$.
\end{prop}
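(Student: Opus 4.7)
I adapt the template of the proof of Proposition \ref{prop-surjectivity-1}: show that $\mathrm{Ker}\bigl((D_{\delta,\beta})^*\bigr) = \{0\}$ and then invoke the Fredholm property (Proposition \ref{prop-Fredholm}) together with the density/closed-range argument from the end of that proof to conclude surjectivity. The new ingredient needed to push the range from $\delta\leq -n/2$ and $\beta\geq -n/2$ to the present $\delta\leq\delta_0$ and $\beta\geq 1-n$ is the decay-jump asymptotic analysis of Proposition \ref{prop-asmptotic-order}, applied to harmonic spinors in the adjoint kernel.

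Concretely, take $\psi\in L^{2}_{1-\delta-n,\,1-\beta-n}(SM)$ with $D\psi=0$. Because $D\psi = 0$ trivially lies in $L^{2}_{\eta-1}$ for any $\eta$, iterating Proposition \ref{prop-asmptotic-order} near the conical point yields an asymptotic expansion $\psi = \sum c_j\,\mathcal{H}_{j,\phi_j} + \text{(remainder of arbitrarily high order)}$, where the sum runs over indicial roots $\nu_j = (1-n)/2 - \lambda_j > 1-\delta-n$. Lemma \ref{lem-cross-section-eigenvalue-lower-bound} forces $|\lambda_j|\geq (n-1)/2$, so the $\nu_j$ fall in $(-\infty,1-n]\cup[\delta_0,+\infty)$ with a gap $(1-n,\delta_0)$ free of indicial roots; combined with the noncriticality of $\delta$ and the lower bound $1-\delta-n\geq 1-\delta_0-n$, this constrains the leading term of $\psi$ at the cone tip to behave like $r^{\delta_0}$. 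An analogous decay-jump argument at infinity, using B\"ar's spectrum $\pm((n-1)/2+k)$ for $k\geq 0$ on the round sphere (cf.\ Definition \ref{defn-critical-infinity}) together with $1-\beta-n\leq 0$, yields $|\psi| = O(\rho^{1-n})$ as $\rho\to\infty$.

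With these two pointwise asymptotic bounds, the cutoff Lichnerowicz--Bochner computation from the proof of Proposition \ref{prop-surjectivity-1} applies verbatim. From $D\psi=0$ and the Lichnerowicz formula,
\[
\int_M |\nabla(\chi_{r_0,R_0}\psi)|^2 + \tfrac{1}{4}\int_M \chi_{r_0,R_0}^2\,\mathrm{Scal}_g\,|\psi|^2 \leq \int_M |d\chi_{r_0,R_0}|^2|\psi|^2,
\]
and the scalar-curvature term is nonnegative. On $A_{R_0}$ the bound $|\psi| = O(\rho^{1-n})$ yields $\int_{A_{R_0}}|d\chi|^2|\psi|^2 = O(R_0^{-n})\to 0$; on $A_{r_0}$ the bound $|\psi| = O(r^{\delta_0})$ with $\delta_0\geq 0$ and $n\geq 3$ yields $\int_{A_{r_0}}|d\chi|^2|\psi|^2 = O(r_0^{2\delta_0+n-2})\to 0$. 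Therefore $\nabla\psi=0$, so $\psi$ is parallel; but $\psi\to 0$ at infinity, so $\psi\equiv 0$. This gives triviality of the cokernel, and surjectivity follows by Fredholm plus the density argument as at the end of the proof of Proposition \ref{prop-surjectivity-1}.

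\textbf{Main obstacle.} The subtle point is justifying that the leading indicial root in the cone-tip expansion of $\psi$ is at least $\delta_0$, i.e.\ comes from a \emph{negative} eigenvalue of $D^{g^N}$, rather than from a positive eigenvalue $\lambda_j\in[(n-1)/2,\,-\lambda_{-1})$ which would give $\nu_j\leq 1-n$ and destroy the inner cutoff estimate. The indicial-root gap from Lemma \ref{lem-cross-section-eigenvalue-lower-bound} combined with the noncritical constraint on $\delta$ (so that $1-\delta-n$ sits strictly between two consecutive indicial roots) is what rules out the problematic positive-eigenvalue contributions and closes the argument.
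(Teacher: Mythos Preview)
Your approach is correct in spirit but takes a genuinely different route from the paper, and there is one presentational gap worth flagging.

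\textbf{Comparison.} The paper does not redo the cokernel argument. Instead it bootstraps from Proposition \ref{prop-surjectivity-1}: given $\psi\in L^2_{\delta'-1,\beta'-1}$ with $-\tfrac{n}{2}<\delta'<\delta_0$ and $1-n<\beta'<-\tfrac{n}{2}$, one has $\psi\in L^2_{-n/2-1,-n/2-1}$ by inclusion, so Proposition \ref{prop-surjectivity-1} produces $\varphi\in L^2_{-n/2,-n/2}$ with $D\varphi=\psi$; then Proposition \ref{prop-asmptotic-order} (and its analogue at infinity) upgrades $\varphi$ to $L^2_{\delta',\beta'}$ because there is no critical index at the cone in $(-\tfrac{n}{2},\delta')$ and none at infinity in $(\beta',-\tfrac{n}{2})$. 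This is two lines once the decay-jump machinery is in place. Your route---apply decay jump to a harmonic spinor in the adjoint kernel, then rerun the Lichnerowicz cutoff argument---also works and is conceptually natural, but it re-proves what Proposition \ref{prop-surjectivity-1} already packaged, so it is longer. What your approach buys is a direct statement that harmonic spinors in $L^2_{1-\delta-n,1-\beta-n}$ are trivial, which is of independent interest.

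\textbf{The gap.} Proposition \ref{prop-asmptotic-order} does not give you pointwise bounds $|\psi|=O(r^{\delta_0})$ or $|\psi|=O(\rho^{1-n})$; it gives $\psi\in L^2_{\delta'}$ (for any noncritical $\delta'<\delta_0$ near the cone, and analogously at infinity) once you check there are no indicial roots to cross. Extracting pointwise control would require the bootstrapping of Lemma \ref{lem-asymptotic-xi1}. Fortunately you do not need it: the cutoff estimate only requires $\psi\in L^2_{(2-n)/2,(2-n)/2}$, exactly as in the proof of Proposition \ref{prop-surjectivity-1}, and the decay jump gives this directly since $(2-n)/2$ lies in the gap $(1-n,\delta_0)$ at the cone and in $(1-n,0)$ at infinity. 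As for your ``main obstacle'': it dissolves once you note (via the anti-commutation in (\ref{eqn-partial-r-D-anti-commute})) that the smallest positive eigenvalue of $D^{g^N}$ is $-\lambda_{-1}$, so the positive-eigenvalue indicial roots are all $\le 1-n-\delta_0<1-\delta-n$ and never appear in the expansion.
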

\begin{proof}
By Proposition \ref{prop-surjectivity-1}, it suffices to show that $D_{\delta, \beta}$ is surjective for $-\frac{n}{2} < \delta < \delta_0$ and $1-n < \beta < -\frac{n}{2}$. 

For arbitrary noncritical $\delta^\prime \in \left( -\frac{n}{2}, \delta_0 \right)$ and $\beta^\prime \in \left( 1-n, -\frac{n}{2}\right)$, we take an arbitrary spinor $ \psi \in L^2_{\delta^\prime -1, \beta^\prime -1}(SM) \subset L^2_{-\frac{n}{2}-1, -\frac{n}{2}-1}(SM)$. Proposition \ref{prop-surjectivity-1} implies that there exists $\varphi \in L^2_{-\frac{n}{2}, -\frac{n}{2}}(SM)$ such that $D \varphi = \psi$. Proposition \ref{prop-asmptotic-order} then implies that $ \varphi \in L^2_{\delta^\prime, \beta^\prime}$, since there is no critical index at  conical point in $\left( -\frac{n}{2}, \delta^\prime \right)$ and no critical index at infinity in $\left( \beta^\prime, -\frac{n}{2} \right)$. Therefore, $D_{\delta^\prime, \beta^\prime}$ is surjective, and this completes the proof.
\end{proof}

\section{Solving Dirac equation}\label{sect-sovlving-Dirac-equation} \label{sec-solv}
In this section, we solve the Dirac equation $D \varphi = 0$ to obtain a harmonic spinor, which has certain asymptotic control near the conical point and asymptotic to a constant spinor near the infinity. This harmonic spinor plays a critical role in the calculation of the mass and the proof of the positive mass theorem in \S$\ref{sect-PMT}$.

Let $\psi_0$ be a constant spinor on $\R^n$. As in \S\ref{subsect-weighted-Sobolev-space}, $\chi_2$ denotes a truncation function supported in $M_\infty$. By identifying $M_\infty$ with $\R^n \setminus B_{R}(0)$, then $\chi_2 \psi_0$ is a spinor on $M^n$ and vanishes in a neighborhood of the conical singularity $o$, and 
\be
D (\chi_2 \psi_0) = O(\rho^{-\tau -1}), \ \ \text{as} \ \ \rho \rightarrow \infty.
\ee
Clearly,  $D (\chi_2 \psi_0)$ vanishes in a neighborhood of $o$.
Thus
\be
D(\chi_2 \psi_0) \in L^2_{\delta-1, \beta-1}, \ \ \forall \delta \in \R \ \ \text{and} \ \ \beta > -\tau.
\ee
Then because $\tau > \frac{n-2}{2}$, in particular, 
\begin{equation}
    D(\chi_2 \psi_0) \in L^2_{\delta-1, \beta-1}(SM), \quad \forall \delta < \delta_0, \ \  -\tau < \beta < -\frac{n-2}{2}.
\end{equation}

Now Proposition $\ref{prop-surjectivity}$ implies that there exists $\xi \in L^{2}_{\delta, \beta}(SM)$, for noncritical indices $(\delta, \beta)$ with $\delta < \delta_0$ and $\max\{-\tau,1-n\} < \beta < -\frac{n-2}{2}$, such that
\be\label{eqn-D}
D \xi = D (\chi_2 \psi_0).
\ee
We set 
\begin{equation}\label{eqn-harmonic-spinor}
\varphi : = \chi_2 \psi_0 - \xi.
\end{equation}
Then equation $(\ref{eqn-D})$ says $D \varphi =0$. 

We obtain the following asymptotic estimate for $\xi$ at infinity, which will be curcial in the proof of the nonnegativity of the mass in Theorem \ref{thm-PMT}, in particular, in the derivation of Lemma \ref{lem-boundary-limit-mass}.
\begin{lem}\label{lem-asymptotic-xi}
The solution $\xi$ to the equation $(\ref{eqn-D})$ satisfies: 
at infinity, 
\begin{equation}
 |\nabla^i \xi| = o(\rho^{\beta - i}), \ \ \text{as} \ \ \rho\rightarrow+\infty,
\end{equation}
for $\max\{-\tau, 1-n\} < \beta < -\frac{n-2}{2}$ and $i=0, 1$. 
\end{lem}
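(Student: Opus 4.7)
The plan is to bootstrap the decay of $\xi$ at infinity in two stages: first improve its weighted $L^{2}$ decay via the ``at infinity'' analog of Proposition \ref{prop-asmptotic-order}, and then convert this $L^{2}$ control into pointwise $C^{1}$ control by a standard rescaling/interior elliptic regularity argument on Euclidean balls of radius $\sim \rho(x)$.

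For the first stage, the infinity version of the decay jump is Minerbe's original result (Lemma 5 and Proposition 4 in \cite{Minerbe-CMP}), which the conical-point arguments in Section \ref{sec-anal} adapt to the singular end. It asserts: if $D\xi=\psi$ on $M_{\infty}$ with $\xi\in L^{2}_{\delta,\beta}$ and $\psi\in L^{2}_{\delta-1,\beta''-1}$ for noncritical $\beta''<\beta$, then up to a finite linear combination of harmonic model spinors on the Euclidean end with decay orders in $(\beta'',\beta)$, one has $\xi\in L^{2}_{\delta,\beta''}$. By Definition \ref{defn-critical-infinity}, the critical indices at infinity all lie outside the open interval $(1-n,0)$, so the entire range $\bigl(\max\{-\tau,1-n\},-\tfrac{n-2}{2}\bigr)$ is noncritical and contains no harmonic obstruction. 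Since $\psi=D(\chi_{2}\psi_{0})=O(\rho^{-\tau-1})$ with similarly decaying derivatives, $\psi\in L^{2}_{\delta-1,\beta''-1}$ for every $\beta''>-\tau$. Hence for every $\beta''\in\bigl(\max\{-\tau,1-n\},\beta\bigr)$ the decay jump forces $\xi\in L^{2}_{\delta,\beta''}$.

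For the second stage, at a point $x\in M_{\infty}$ with $R:=\rho(x)$ large, introduce rescaled coordinates $y=(z-x)/R$ on the Euclidean ball $B_{R/2}(x)\subset M_{\infty}$. The pulled-back metric $\hat g$ is uniformly $C^{2}$-close to the Euclidean metric on $B_{1/2}(0)$ by Definition \ref{defn-AF-conic-mfld}, so the rescaled Dirac operator $\hat D$ has smooth coefficients with bounds independent of $R$. Standard interior $W^{2,p}$ elliptic regularity with $p>n$, combined with Sobolev embedding, gives
\[
\|\hat\xi\|_{C^{1}(B_{1/4})}\;\leq\;C\bigl(\|\hat\xi\|_{L^{2}(B_{1/2})}+\|\hat\psi\|_{W^{1,p}(B_{1/2})}\bigr),
\]
with $C$ independent of $R$. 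Translating the weights, $\xi\in L^{2}_{\delta,\beta''}$ yields $\|\hat\xi\|_{L^{2}(B_{1/2})}\lesssim R^{\beta''}\|\xi\|_{L^{2}_{\delta,\beta''}}$, while $\psi=O(\rho^{-\tau-1})$ and $\nabla\psi=O(\rho^{-\tau-2})$ yield $\|\hat\psi\|_{W^{1,p}(B_{1/2})}\lesssim R^{-\tau}$. Undoing the rescaling at $y=0$ and using $\beta''>-\tau$ to absorb the $R^{-\tau}$ term into $R^{\beta''}$, we obtain
\[
|\xi(x)|\;\lesssim\;R^{\beta''},\qquad |\nabla\xi(x)|\;\lesssim\;R^{\beta''-1}.
\]

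Finally, because $\beta<-\tfrac{n-2}{2}$ strictly, for every sufficiently small $\epsilon>0$ the choice $\beta''=\beta-\epsilon$ remains in the admissible noncritical range $\bigl(\max\{-\tau,1-n\},\beta\bigr)$. The resulting pointwise bound $|\nabla^{i}\xi(x)|\leq C_{\epsilon}\rho(x)^{\beta-i}\cdot\rho(x)^{-\epsilon}$, together with $\rho^{-\epsilon}\to 0$, yields the claimed $o(\rho^{\beta-i})$ decay for $i=0,1$. The main technical ingredient is the infinity version of the decay jump; thanks to the noncriticality of the relevant interval, no harmonic corrections appear, and the argument is a direct transcription of Minerbe's Euclidean analysis, with the eigendecomposition of the spherical Dirac operator on $\mathbb{S}^{n-1}$ playing the role that $N$ plays at the cone tip. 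The rescaling and interior elliptic regularity step is routine.
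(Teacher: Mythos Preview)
Your argument is correct and follows a genuinely different route from the paper's own proof. The paper passes to the second-order equation $D^{2}\xi = D^{2}(\chi_{2}\psi_{0})$ and then invokes the weighted Sobolev inequality (Bartnik \cite{Bartnik-CPAM}) together with a Nash--Moser iteration (as carried out in \cite{DW-2022}) to upgrade the weighted $L^{2}$ membership of $\xi$ to the pointwise $o(\rho^{\beta-i})$ decay. You instead stay with the first-order equation, use Minerbe's decay jump at infinity to improve $\xi\in L^{2}_{\delta,\beta}$ to $\xi\in L^{2}_{\delta,\beta-\epsilon}$ (legitimate because $(1-n,0)$ contains no critical index at infinity), and then run a rescaling plus interior elliptic bootstrap to convert this to $|\nabla^{i}\xi|=O(\rho^{\beta-\epsilon-i})=o(\rho^{\beta-i})$. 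Your route has the virtue of reusing exactly the machinery already built in \S\ref{sec-anal} (in its Euclidean-end incarnation, which is precisely Minerbe's original setting), whereas the paper outsources this step to separate references.

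Two minor remarks. First, your Stage~1 is in fact dispensable: in the rescaling estimate one has $\|\hat\xi\|_{L^{2}(B_{1/2})}^{2}\sim R^{2\beta}\int_{B_{R/2}(x)}\rho^{-2\beta-n}|\xi|^{2}$, and the last integral is the tail of a finite weighted $L^{2}$ norm, hence $o(1)$ as $R\to\infty$. Combined with $\|\hat\psi\|_{W^{1,p}}\lesssim R^{-\tau}=o(R^{\beta})$, this already yields $o(\rho^{\beta-i})$ without ever invoking the decay jump. Second, the sentence ``because $\beta<-\tfrac{n-2}{2}$ strictly'' is not the right justification for why $\beta''=\beta-\epsilon$ stays admissible; what you need there is the \emph{lower} strict inequality $\beta>\max\{-\tau,1-n\}$. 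This is only a phrasing slip and does not affect the argument.
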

\begin{proof}
Because $D(\chi_2 \psi_0)$ is smooth on $M\setminus\{o\}$, by local elliptic regularity theory, we know $\xi$ is smooth on $M \setminus \{o\}$. By applying $D$ on the both sides of the equation $(\ref{eqn-D})$, we see that $\xi$ is a solution to the second order elliptic equation:
\begin{equation}
D^2 \xi = D^2(\chi_2 \psi_0) \in L^{2}_{\delta-1, \beta-2},
\ \ \forall \delta<\delta_0, \ \ -\tau < \beta < -\frac{n-2}{2}.
\end{equation}
Note that, by the asymptotic control for the metric $g$ near infinity in Definition $\ref{defn-AF-conic-mfld}$, we have
\begin{equation}
D^2(\chi_2 \psi_0) = O(\rho^{-\tau -2}), \ \ \nabla \left( D^2(\chi_2 \psi_0) \right) = O(\rho^{-\tau - 3}), \ \ \text{as} \ \ \rho \rightarrow \infty.
\end{equation}
As a result, the norms of both $D^2(\chi_2 \psi_0)$ and $\nabla \left( D^2(\chi_2 \psi_0) \right)$ are bounded near infinity.

Then with the help of the weighted Sobolev inequality (see e.g. Theorem 1.2 in \cite{Bartnik-CPAM}), the asymptotic estimate of $\xi$ at infinity can be obtained by employing Nash-Moser iteration argument (e.g. as in Section 8 in \cite{DW-2022}). We will omit the details here as it is pretty standard but tedious.  
\end{proof}

In a sufficiently small neighborhood of the conical point, the right hand side of the equation $(\ref{eqn-D})$ is equal to zero. This enables us to derive the following partial asymptotic expansion for $\xi$ near the conical point, which will be crucial in the proof of both nonnegativity part and rigidity part of positive mass theorem in Theorem \ref{thm-PMT}.
\begin{lem}\label{lem-asymptotic-xi1}
The solution $\xi$ to the equation $(\ref{eqn-D})$ satisfies
\begin{equation}\label{eqn-xi-precisely}
\xi = r^{\delta_0}\phi + \xi^\prime, \ \ \text{as} \ \ r\rightarrow0, 
\end{equation}
where $\phi$ is an eigenspinor of the Dirac operator $D^{g^N}$ on the cross section with eigenvalue $\lambda_{-1}$, and \begin{equation}\label{eqn-asymptotic-xi1}
|\nabla^i \xi^\prime| = o(r^{\delta^\prime -i}), \ \ \text{as} \ \ r \to 0, \ \ \text{for some}  \ \ \delta^\prime > \delta_0 \ \ \text{and} \ \ i=0, 1.
\end{equation}
\end{lem}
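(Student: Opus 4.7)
The plan is to apply the decay-jump Proposition \ref{prop-asmptotic-order} directly to $\xi$ on a small ball around the conical point, where the source $D(\chi_2\psi_0)$ in equation (\ref{eqn-D}) vanishes identically, and then to upgrade the resulting weighted $L^2$ bound on the remainder to a pointwise bound with one derivative via a standard dyadic rescaling argument.

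First, since $\chi_2$ is supported away from $o$, we have $D(\chi_2\psi_0)\equiv 0$ on $B_{2r_0}(o)$ for every sufficiently small $r_0>0$; hence $D\xi=0$ there, and by construction $\xi\in L^2_\delta(SB_{2r_0})$ for every noncritical $\delta<\delta_0$. Recall $\delta_0=\nu_{-1}=-\tfrac{n-1}{2}-\lambda_{-1}$; since $\lambda_{-1}$ is the largest negative eigenvalue of $D^{g^N}$, the next critical value strictly above $\delta_0$ is $\nu_{-2}$, corresponding to the second-largest negative eigenvalue. I will fix noncritical real numbers $\delta_1<\delta_0<\delta_2$ with
$$\delta_2<\min\{\nu_{-2},\,\delta_0+\alpha\},$$
chosen close enough to $\delta_0$ that it is the only critical index in $(\delta_1,\delta_2)$; this is possible because $\{\nu_j\}$ is discrete. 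Applying Proposition \ref{prop-asmptotic-order} with these exponents to $\xi\in L^2_{\delta_1}$ (and $D\xi=0\in L^2_{\delta_2-1}$ trivially) gives
$$\xi=\mathcal{H}_{-1,\phi}+\varphi_1,\qquad \phi\in E_{-1},\qquad \varphi_1\in L^2_{\delta_2}(SB_{2r_0}),$$
while Corollary \ref{cor-harmonic-spinor-on-cone} expands $\mathcal{H}_{-1,\phi}=r^{\delta_0}\phi+\zeta$ with $\zeta\in W^{1,2}_{\theta}$ for every $\theta<\delta_0+\alpha$. Setting $\xi^\prime:=\zeta+\varphi_1$ yields the decomposition (\ref{eqn-xi-precisely}), with $\xi^\prime\in L^2_{\tilde\delta}(SB_{2r_0})$ for some $\tilde\delta>\delta_0$ (namely, any value in $(\delta_0,\delta_2)$).

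It remains to convert this weighted Sobolev control into pointwise decay. The remainder satisfies the elliptic equation
$$D\xi^\prime=D\xi-D(r^{\delta_0}\phi)=-(D-\overline{D})(r^{\delta_0}\phi)\qquad\text{on }B_{2r_0}(o),$$
whose right-hand side is pointwise $O(r^{\delta_0+\alpha-1})$ by the bound $|\overline{\nabla}^k h|_{\overline g}=O(r^{\alpha-k})$ in Definition \ref{defn-conic-mfld}. I will then rescale each dyadic annulus $A_k=\{2^{-k-1}r_0\le r\le 2^{-k}r_0\}$ to a fixed unit annulus of the model cone, on which the rescaled metrics are uniformly $C^2$-close to the model cone metric; standard interior elliptic estimates for the Dirac operator bound $\|\xi^\prime\|_{C^1}$ on each rescaled annulus in terms of the rescaled $L^2$ norm of $\xi^\prime$ and the rescaled $C^0$ norm of the source. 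Combined with the summability encoded in $\xi^\prime\in L^2_{\tilde\delta}$, this yields $|\nabla^i\xi^\prime|=o(r^{\delta^\prime-i})$ on $A_k$ for any fixed $\delta^\prime<\tilde\delta$; choosing $\delta^\prime\in(\delta_0,\tilde\delta)$ completes the argument.

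The main obstacle is this last conversion. Proposition \ref{prop-asmptotic-order} delivers only weighted $L^2$ control, whereas the lemma demands pointwise asymptotics with one derivative, and keeping the interior elliptic constants uniform along the shrinking annuli requires the explicit asymptotic form of $g$ near $o$. The extraction of the $r^{\delta_0}\phi$ term itself, by contrast, is a clean application of Proposition \ref{prop-asmptotic-order} and Corollary \ref{cor-harmonic-spinor-on-cone}, exploiting precisely the fact that $\delta_0$ is the single critical index crossed when passing from $L^2_{\delta_1}$ to $L^2_{\delta_2}$.
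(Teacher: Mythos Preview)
Your proposal is correct and the first half is essentially identical to the paper's proof: both choose noncritical exponents $\delta<\delta_0<\delta'$ with $\delta_0$ the only critical index in between, invoke Proposition~\ref{prop-asmptotic-order} together with Corollary~\ref{cor-harmonic-spinor-on-cone} to write $\xi=r^{\delta_0}\phi+\xi'$ with $\xi'\in L^2_{\delta'}$, and then observe that $D\xi'=-(D-\overline{D})(r^{\delta_0}\phi)$ near $o$. The difference lies in how the weighted $L^2$ control on $\xi'$ is upgraded to the pointwise $C^1$ estimate. The paper runs a weighted $L^p$ bootstrap: starting from $\xi'\in L^2_{\delta'}$ and $D\xi'\in W^{1,p}_{\delta'-1}$ for all $p>1$, it applies the $k=2$ case of Proposition~\ref{prop-weight-elliptic-estimate} to obtain $\xi'\in W^{2,2}_{\delta'}$, then uses weighted Sobolev embedding to raise the integrability exponent, iterating until $p>n$ and finally invoking a weighted $C^1$ embedding. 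Your dyadic rescaling argument is an equally valid and somewhat more elementary route: it bypasses the weighted $L^p$ machinery by reducing to uniform interior estimates on a fixed annulus (the scale-invariance of the model cone metric and the $O(r^\alpha)$ decay of the perturbation guarantee the uniformity), and the $o(r^{\delta'-i})$ conclusion follows because the annular $L^2_{\tilde\delta}$ norms tend to zero. One small sharpening: to get $C^1$ on the rescaled annulus you should track at least a H\"older or $W^{1,p}$ norm of the rescaled source rather than merely its $C^0$ norm, but since $(D-\overline{D})(r^{\delta_0}\phi)$ is smooth with all derivatives controlled by the asymptotics in Definition~\ref{defn-conic-mfld}, this is available.
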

\begin{proof}
We will first use the decay jump property in Proposition \ref{prop-asmptotic-order} to obtain a decomposition of $\xi$ as in $(\ref{eqn-xi-precisely})$, and then use a weighted elliptic bootstrapping argument to derive asymptotic control in $(\ref{eqn-asymptotic-xi1})$. In this proof, we only consider the behavior of $\xi$ near the conical point $o$. So we restrict the problem to a small neighborhood of $o$, and in the notations of weighted spaces, we omit the weight index $\beta$ that concerns about the behavior at infinity.

    Choose noncritical $\delta$ and $\delta^\prime$ such that $\delta < \delta_0 < \delta^\prime < \delta + \alpha$ and $\delta_0$ is the only critical index in $(\delta, \delta^\prime)$ at conical point. Then $D\xi = D(\chi_2 \psi_0) \in L^2_{\delta^\prime -1}(SB_{r_0})$, since $D(\chi_2 \psi_0)$ vanishes near the conical singularity. Moreover, $\xi \in L^{2}_{\delta}(SB_{r_0})$. Therefore, Proposition $\ref{prop-asmptotic-order}$ and Corollary $\ref{cor-harmonic-spinor-on-cone}$ imply that
\begin{equation}\label{eqn-decomposition-xi}
\xi = r^{\delta_0}\phi + \xi^\prime, \ \ \text{as} \ \ r\rightarrow0, 
\end{equation}
where $\phi$ is a eigenspinor of the Dirac operator $D^{g^N}$ on the cross section with eigenvalue $\lambda_{-1}$, and 
\begin{equation}\label{eqn-xi-prime-regularity}
\xi^\prime \in L^{2}_{\delta^\prime}(SB_{r_0}).
\end{equation}

Because $D \xi = D(\chi_2 \psi_0)$ and $\chi_2 \psi_0 = 0$ on a small neighborhood of the singular point $o$, we have $D\xi = 0 $ on a neighborhood of $o$. Then by $(\ref{eqn-decomposition-xi})$, on a small neighborhood of $o$, we have
\begin{equation}\label{eqn-xi-prime}
D\xi^\prime = - D (r^{\delta_0} \phi) = (\overline{D} - D)(r^{\delta_0} \phi),
\end{equation}
since $\overline{D} (r^{\delta_0} \phi) = 0$. Moreover, by the asymptotic control for the metric $g$ near the conical singularity as in the item ${\rm (iii)}$ in Definition \ref{defn-conic-mfld}, we have 
\begin{equation}\label{eqn-Dirac-operator-difference}
(\overline{D} - D) (r^{\delta_0} \phi) \in W^{1, p}_{\delta - 1}, \ \ \forall \delta < \delta_0 + \alpha \ \ \text{and} \ \ \forall p >1.
\end{equation}
In particular,
\begin{equation}\label{eqn-initial-regularity}
 (\overline{D} - D)(r^{\delta_0} \phi) \in W^{1, p}_{\delta^\prime - 1}(SB_{r_{0}}), \ \ \forall p>1.
\end{equation}

Then from the equation $(\ref{eqn-xi-prime})$ and the regularity estimates $\ref{eqn-xi-prime-regularity}$ and $(\ref{eqn-initial-regularity})$, by using weighted elliptic estimate in Proposition \ref{prop-weight-elliptic-estimate} with $k=2, p=2$, we obtain: $\xi^\prime \in W^{2, 2}_{\delta^\prime}(SB_{r_0})$. Then weighted Sobolev embedding in Proposition 3.4 in \cite{DW-MRL-2020} implies $\xi^\prime \in L^{\frac{2n}{n-2}}_{\delta^\prime}(SB_{r_0})$. Now combining this with $(\ref{eqn-Dirac-operator-difference})$ (for $p=\frac{2n}{n-2}$), again by using the weighted elliptic estimate in Proposition $\ref{prop-weight-elliptic-estimate}$ with $k=2, p = \frac{2n}{n-2}$, we obtain $\xi^\prime \in W^{2, \frac{2n}{n-2}}_{\delta^\prime}(SB_{r_0})$. If $\frac{2n}{n-2} > n$, then weighted elliptic embedding in Proposition 3.4 in \cite{DW-MRL-2020} implies the asymptotic control in $(\ref{eqn-asymptotic-xi1})$. Otherwise, we can repeat the above elliptic bootstrapping process finitely many times to obtain $W^{2, p}_{\delta^\prime}$ with $p>n$. Then the asymptotic control in $(\ref{eqn-asymptotic-xi1})$ follows by Proposition 3.4 in \cite{DW-MRL-2020}.
\end{proof}


\section{Proof of the positive mass with conical singularity}\label{sect-PMT} \label{sec-comp}
In this section, we prove a positive mass theorem for AF spin manifold with isolated conical singularity of dimension $n \geq 3$.

First of all, we recall Lemma 4.1 in \cite{Dai-PMT} (see also Lemma 11 in \cite{Minerbe-CMP}) as:
\begin{lem}\label{lem-boundary-limit-mass}
For the harmonic spinor $\varphi$ obtained in $(\ref{eqn-harmonic-spinor})$, we have
    \begin{eqnarray}\label{eqn-orderinlarge}
    \lim_{R\to\infty}\int_{S_{R}}\langle \nabla_{\nu_{R}}\varphi,\varphi\rangle d A=\frac{1}{4}\omega_{n}m(g)    
    \end{eqnarray}
    where $d A$ is the area element and $\nu_{R}$ is the outer normal vector for $S_{R}$.

\end{lem}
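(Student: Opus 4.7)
The plan is to reduce the boundary integral to the classical Witten mass formula on the AF end, with remainders controlled by the decay estimates of Lemma \ref{lem-asymptotic-xi}. Since $\chi_2 \equiv 1$ on $S_R$ for $R$ sufficiently large, we have $\varphi = \psi_0 - \xi$ on such spheres, so that bilinear expansion gives
\[
\langle \nabla_\nu \varphi, \varphi \rangle = \langle \nabla_\nu \psi_0, \psi_0 \rangle - \langle \nabla_\nu \psi_0, \xi \rangle - \langle \nabla_\nu \xi, \psi_0 \rangle + \langle \nabla_\nu \xi, \xi \rangle,
\]
and I would analyze each term.

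For the leading term, I would write $g_{ij} = \delta_{ij} + h_{ij}$ with $h = O(\rho^{-\tau})$ and expand $\nabla_i \psi_0 = -\tfrac{1}{4}\omega_{jk,i}\, e^j \cdot e^k \cdot \psi_0$ in an orthonormal frame; the standard manipulation using the Clifford relations yields $4\langle \nabla_i \psi_0, \psi_0 \rangle = \partial_k g_{ki} - \partial_i g_{kk} + O(\rho^{-2\tau-1})$. Since $\tau > (n-2)/2$, the $O(\rho^{-2\tau-1})$ remainder integrates to $o(1)$ on $S_R$, and Definition \ref{def-mass} gives $\int_{S_R} \langle \nabla_\nu \psi_0, \psi_0 \rangle \, dA \to \tfrac{1}{4}\omega_n m(g)$. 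Using Lemma \ref{lem-asymptotic-xi} ($|\nabla^i \xi| = o(\rho^{\beta-i})$ with $\beta < -(n-2)/2$) together with $|\nabla \psi_0| = O(\rho^{-\tau-1})$, the first two error terms are immediately bounded by $o(R^{n-2+2\beta})$ and $o(R^{n-2-\tau+\beta})$, both tending to zero.

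The main obstacle is the remaining cross term $\int_{S_R} \langle \nabla_\nu \xi, \psi_0 \rangle \, dA$, whose pointwise estimate only gives $o(R^{n-2+\beta})$, which fails to vanish without $\tau > n-2$. I would overcome this by invoking the harmonic equation $D\varphi = 0$, which on $\{\chi_2 = 1\}$ becomes $D\xi = D\psi_0$. Applying Stokes' theorem to the $1$-form $V_j = \langle \nabla_j \xi, \psi_0 \rangle$ on the annulus $\{R \le \rho \le R'\}$ and using the Lichnerowicz identity $\nabla^*\nabla \xi = D^2\xi - \tfrac{1}{4}\Scal_g\, \xi$ with $D^2\xi = D^2\psi_0 = O(\rho^{-\tau-2})$, the difference of the two boundary integrals reduces to a volume integral whose integrand is absolutely integrable on $\{\rho \ge R_0\}$ and vanishes as $R, R' \to \infty$. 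This is the content of Lemma 4.1 in \cite{Dai-PMT} and Lemma 11 of \cite{Minerbe-CMP}, whose arguments transfer verbatim since they depend only on the AF structure at infinity.
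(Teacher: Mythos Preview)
The paper does not supply its own proof of this lemma: it is stated as a direct recollection of Lemma~4.1 in \cite{Dai-PMT} (and Lemma~11 in \cite{Minerbe-CMP}), since the computation lives entirely on the AF end and is unaffected by the conical singularity. Your sketch ends by deferring to exactly the same two references, so in that sense your approach and the paper's coincide.

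That said, there is a genuine slip in your leading-term computation. The pointwise identity $4\langle\nabla_i\psi_0,\psi_0\rangle = \partial_k g_{ki} - \partial_i g_{kk} + O(\rho^{-2\tau-1})$ is not what the standard argument produces: since $e^j\cdot e^k$ is skew-Hermitian for $j\neq k$, the quantity $\langle\nabla_i\psi_0,\psi_0\rangle$ has vanishing real part and cannot by itself yield the (real) ADM integrand. What the cited references actually compute is $\langle(\nabla_\nu+\nu\cdot D)\psi_0,\psi_0\rangle$, using the identity $\nabla_\nu+\nu\cdot D=\sum_{e_i\perp\nu}\nu\cdot e_i\cdot\nabla_{e_i}$; the extra Clifford factor $\nu\cdot e_i$ converts the skew-Hermitian pieces into Hermitian ones, and it is this expression whose expansion via the connection form gives $\partial_k g_{ki}-\partial_i g_{kk}$ up to $O(\rho^{-2\tau-1})$. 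The vanishing term $\nu\cdot D\varphi$ should be inserted \emph{before} the bilinear split $\varphi=\psi_0-\xi$, not after. With this correction the cross terms are also handled more cleanly, and your separate Stokes/Lichnerowicz argument for $\int_{S_R}\langle\nabla_\nu\xi,\psi_0\rangle\,dA$ becomes unnecessary.
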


Then we prove the following positive mass theorem.
\begin{thm}\label{thm-PMT}
Let $(M^n, g)$ be a $n$-dimensional AF spin manifold with finitely many isolated conical singularity and $n\geq 3$. If the scalar curvature is nonnegative on the smooth part, then the mass $m(g)$ is nonnegative. Furthermore, the mass $m(g)=0$ if and only if $(M^{n},g)\simeq (\mathbb{R}^{n},g_{\mathbb{R}^{n}})$, provided additionally that the cross sections of the model cones are simply connected when $n = 4k$ for $k \in \N$.
\end{thm}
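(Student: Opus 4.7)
The plan is to apply Witten's spinor identity on the truncated annulus $\Omega_{r_0, R} := (M \setminus \overline{B_{r_0}(o)}) \cap B_R$ to the harmonic spinor $\varphi = \chi_2 \psi_0 - \xi$ built in Section~\ref{sec-solv} from any nonzero asymptotically constant spinor $\psi_0$. Combining the Lichnerowicz--Bochner formula with $D\varphi = 0$ and integrating by parts yields
\begin{equation*}
\int_{\Omega_{r_0,R}} \left( |\nabla \varphi|^2 + \tfrac{1}{4}\Scal_g|\varphi|^2 \right) dv_g = \int_{S_R} \langle \nabla_{\nu_R}\varphi, \varphi\rangle\, dA - \int_{\partial B_{r_0}(o)} \langle \nabla_{\nu_{r_0}}\varphi, \varphi\rangle\, dA.
\end{equation*}
As $R \to \infty$, the outer boundary contribution converges to $\tfrac{\omega_n}{4}m(g)$ by Lemma~\ref{lem-boundary-limit-mass}. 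For the inner boundary, $\varphi = -\xi$ near $o$, and by Lemma~\ref{lem-asymptotic-xi1} the expansion $\xi = r^{\delta_0}\phi + \xi'$ with $\delta_0 \ge 0$ (Lemma~\ref{lem-delta_0}) and a strictly higher-order remainder $\xi'$ controls both $|\varphi|$ and $|\nabla\varphi|$ pointwise. Together with the $r_0^{n-1}$ scaling of the cross-sectional area, this bounds the inner integral by $C r_0^{2\delta_0 + n - 2}$, which vanishes as $r_0 \to 0$ since $n \ge 3$. Because $\Scal_g \ge 0$, the identity forces $m(g) \ge 0$.

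For the rigidity case, suppose $m(g) = 0$. Then the left-hand integrand above vanishes identically, so $\nabla \varphi = 0$ on the smooth part for every choice of $\psi_0$. In particular $|\varphi|$ is constant, equal to $|\psi_0|$ at infinity. If $\delta_0 > 0$, the expansion implies $|\varphi| \to 0$ at $o$, contradicting $|\psi_0| \neq 0$; hence $\delta_0 = 0$, and by Lemma~\ref{lem-delta_0} the cross section $(N, g^N)$ admits a real Killing spinor. In that case the leading coefficient $\phi$ satisfies $D^{g^N}\phi = -\tfrac{n-1}{2}\phi$, saturating Friedrich's eigenvalue estimate, so by Friedrich's rigidity \cite{Friedrich-80} it is itself a real Killing spinor on $(N, g^N)$. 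The linear map $\psi_0 \mapsto \phi$ is injective --- if $\phi = 0$ the norm-constancy argument forces $\psi_0 = 0$ --- so the space of Killing spinors on $(N,g^N)$ has dimension at least $2^{[n/2]}$, which is the maximum possible. Simultaneously, carrying a full $2^{[n/2]}$-dimensional space of parallel spinors on the smooth part of $M$ forces the spin holonomy to act trivially; faithfulness of the spin representation for $n \ge 3$ then yields flatness of the smooth part of $(M,g)$.

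By B\"ar's classification \cite{Bar-JMSP-1996}, a complete $(n-1)$-dimensional Riemannian spin manifold attaining the maximal dimension of real Killing spinors is, up to isometry, either the round sphere $S^{n-1}$ or, precisely when $n = 4k$ so that $\mathbb{RP}^{n-1}$ is spin, possibly $\mathbb{RP}^{n-1}$. The simple-connectedness hypothesis in these exceptional dimensions rules out the projective case, so $(N, g^N) \simeq S^{n-1}$ and the model cone at $o$ is isometric to $(\mathbb{R}^n, g_{\mathbb{R}^n})$. Filling in the Euclidean cone at $o$ extends $(M,g)$ to a complete, flat, AF spin manifold without singular points; by uniqueness of complete flat Riemannian manifolds with a single asymptotically Euclidean end this extension is isometric to $(\mathbb{R}^n, g_{\mathbb{R}^n})$, which gives the rigidity.

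The main obstacle is the rigidity at the cone tip: classical relative-volume-comparison and curvature-decay arguments for Ricci-flat AF manifolds cannot be transplanted across the singularity, and one cannot a priori treat $o$ as a removable puncture. The substitute is the partial asymptotic expansion of $\xi$ at $o$ (Lemma~\ref{lem-asymptotic-xi1}), which, combined with norm-constancy of parallel spinors, simultaneously forces $\delta_0 = 0$ and produces a maximal family of eigenspinors on the cross section; Friedrich's rigidity then promotes each to a Killing spinor, handing the final identification to B\"ar's classification. The simply-connected assumption in $n = 4k$ is unavoidable because in exactly those dimensions $\mathbb{RP}^{n-1}$ is a competing candidate with the maximal number of Killing spinors.
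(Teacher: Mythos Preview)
Your argument tracks the paper's proof closely: the Lichnerowicz identity on the truncated annulus, the outer boundary limit via Lemma~\ref{lem-boundary-limit-mass}, the vanishing of the inner boundary term from the $r^{\delta_0}$ asymptotics with $\delta_0\ge0$, norm-constancy of parallel spinors forcing $\delta_0=0$, Friedrich's rigidity upgrading each leading coefficient $\phi$ to a Killing spinor, and B\"ar's classification identifying $(N,g^N)$ with the round sphere are all exactly as in the paper.

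The one substantive divergence is the final step after $N\simeq S^{n-1}$. The paper does \emph{not} argue flatness directly: it observes that once the cross section is the round sphere, $M$ is topologically smooth at $o$, and the decay $|\overline\nabla^k h|_{\bar g}=O(r^{\alpha-k})$ with $\bar g=g_{\R^n}$ places $g$ in $W^{1,n}_{\mathrm{loc}}$ across $o$; it then invokes the rigidity case of the low-regularity positive mass theorem of Lee--LeFloch~\cite{Lee-LeFloch} (or Jiang--Sheng--Zhang~\cite{JSZ2022}) to conclude $(M,g)\simeq(\R^n,g_{\R^n})$. You instead use the full $2^{[n/2]}$-dimensional family of parallel spinors to force the curvature tensor to vanish on the smooth part, and then ``fill in'' at $o$. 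This is a legitimate and more self-contained alternative, but the filling-in needs a sentence you omit: the phrase ``filling in the Euclidean cone'' is not quite right, since the actual metric is $g=\bar g+h$, not $\bar g$. What makes it work is that $g$ is flat on the simply connected punctured ball $(0,1)\times S^{n-1}$ (here $n\ge3$), so the developing map gives a local isometry into $\R^n$; the $C^0$ convergence $g\to\bar g$ as $r\to0$ then forces it to extend across $o$, so $g$ itself extends smoothly and the resulting manifold is complete, flat, and AF, hence $\R^n$. With that addition both routes are valid: yours avoids importing an external low-regularity rigidity theorem, while the paper's avoids the developing-map argument.
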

\begin{proof} For simplicity we assume that $(M, g)$ has only a single conical singularity at $o$; the finitely many case can be treated with only notational changes.
    For $R$ large and $\epsilon$ small, take a compact set $\Omega$ such that $\partial \Omega=S_{R}$ and a ball $B_{\epsilon}(o)$. For the harmonic spinor $\varphi$ obtained in (\ref{eqn-harmonic-spinor}), by  Lichnerowicz-Bochner formula $D^2=\nabla^{\ast}\nabla+\frac{{\rm Scal}_{g}}{4}$ and integration by part, we have
    \begin{eqnarray*}
        \int_{\Omega\backslash B_{\epsilon}(o)}\left[|\nabla\varphi|^2+\frac{{\rm Scal}_{g}}{4}\varphi^2\right]d\operatorname{vol}_{g}&=&\int_{S_{R}}\langle \nabla_{\nu_{R}}\varphi,\varphi\rangle d A-\int_{\partial B_{\epsilon}(o)}\langle \nabla_{\nu_{\epsilon}}\varphi,\varphi\rangle d A,
    \end{eqnarray*}
    where $d A$ is the area element, $\nu_{R}$ is the outer normal vector for $S_{R}$ and $\nu_{\epsilon}$ is the outer normal vector for $\partial B_{\epsilon}(o)$.
    By Lemma \ref{lem-boundary-limit-mass}, the first boundary term on the right hand side tends to $\frac{1}{4} \omega_n m(g)$ as $R \rightarrow +\infty$.
    Moreover, for sufficiently small $\epsilon > 0$, $\varphi =- \xi$ by the definition of $\varphi $ in $(\ref{eqn-harmonic-spinor})$, so the asymptotic estimate for $\xi$ in Lemma $\ref{lem-asymptotic-xi}$ implies
    \begin{equation}
    \int_{\partial B_{\epsilon}(o)}\langle \nabla_{\nu_\epsilon} \varphi, \varphi \rangle \sim \epsilon^{2 \delta - 1} \cdot \epsilon^{n-1} = \epsilon^{2\delta + n - 2} \to 0, \ \ \text{as} \ \ \epsilon \rightarrow 0,
    \end{equation}
    for $n\geq 3$, since we can choose $\delta_{0}>\delta > -\frac{1}{2}$ by Proposition $\ref{prop-surjectivity}$ and Lemma $\ref{lem-delta_0}$.
    Thus, by letting $R \to +\infty$ and $\epsilon \to 0$, we obtain
    \begin{eqnarray*}    \int_{M}\left[|\nabla\varphi|^2+\frac{{\rm Scal}_{g}}{4}\varphi^2\right]d\operatorname{vol}_{g}&=&\frac{1}{4}\omega_{n}m(g).
    \end{eqnarray*}
    Therefore,  ${\rm Scal}_{g}\geq 0$ implies $m(g)\geq 0$. 
    
    If $m(g)=0$, then $\varphi$ is a parallel spinor $\nabla\varphi=0$ and hence $(M,g,o)$ is Ricci flat. 
    By Lemma $\ref{lem-delta_0}$, if $\delta_0 \neq 0$, then $\delta_0 >0$. Lemma \ref{lem-asymptotic-xi1} further implies $\varphi \to 0$ at the singular point $o$. This contradicts with that $\varphi$ is a nonzero parallel spinor (and $\varphi$ approaches a nonzero constant spinor at infinity). Therefore, $\delta_0 =0$, and so $\lambda_{-1} = \frac{1-n}{2}$. 
     Consequently, by Lemma \ref{lem-cross-section-eigenvalue-lower-bound}, we obtain that the leading order term $\phi$ in the partial asymptotic expansion of $-\xi$($=\varphi$ near the singular point) in Lemma \ref{lem-asymptotic-xi1}, is a real Killing spinor, since it realizes the lower bound in the Dirac operator eigenvalue estimate of Friedrich \cite{Friedrich-80}. Moreover, $\varphi = \xi \to \phi$ at $o$.

    Summarizing, when $m(g)=0$, each harmonic spinor $\varphi$ must be parallel, and approaches a real Killing spinor $\phi$ on $(N,g^{N})$ at the conical singularity.

    Because $\mathbb{R}^{n}$ has $2^{\left[\frac{n}{2}\right]}$ linearly independent constant spinors, we obtain $2^{\left[\frac{n}{2}\right]}$ linearly independent harmonic spinors via solving the Dirac equation $(\ref{eqn-D})$ and taking $\psi_0$ as each of these linearly independent constant spinors on $\R^n$:
    \[\varphi_{1},\cdots,\varphi_{2^{\left[\frac{n}{2}\right]}}. \]
    Then we get $2^{\left[\frac{n}{2}\right]}$ linearly independent Killing spinors:
    \[\phi_{1},\cdots,\phi_{2^{\left[\frac{n}{2}\right]}}\]
    on $(N,g^{N})$. Thus $(N,g^{N})$ must be isometric to the standard sphere $(\mathbb{S}^{n-1},g_{\mathbb{S}^{n-1}})$, by Theorem 4 in \cite{Bar-JMSP-1996}, if we assume $N$ is simply connected. Therefore $M$ is actually smooth at $o$. Moreover, because $|h|_{\bar{g}}=O(r^{\alpha})\to 0$ as $r\to 0$, $g=\bar{g}+h$ is continuous on the whole manifold including the conical part. Note here $\bar{g}=dr^2+r^2g_{\mathbb{S}^{n-1}}=g_{\mathbb{R}^{n}}$. On $[0, 1) \times N$, we take $\bar{g} = g_{\R^n}$ as a smooth background metric, and because $|\overline{\nabla} g|_{\bar{g}} = |\overline{\nabla}(\bar{g} +h)|_{\bar{g}} = | \overline{\nabla}h |_{\bar{g}} = O(r^{\alpha -1})$ as $r \rightarrow 0$, where $\overline{\nabla}$ is the Levi-Civita connection of $\bar{g}$, see Definition \ref{defn-conic-mfld}.  This implies that $g\in W^{1,n}(S^{2}T^{\ast}M|_{[0,1)\times N}, \bar{g})$. Thus, our metric $g$ satisfies the regularity assumption in \cite{Lee-LeFloch} and \cite{JSZ2022}. Thus, by Theorem 1.1 in \cite{Lee-LeFloch} or in \cite{JSZ2022}, we conclude that $(M^{n},g)\cong(\mathbb{R}^{n},g_{\mathbb{R}^{n}})$.
\end{proof}
\begin{rmrk}
{\rm
In the proof of Theorem \ref{thm-PMT}, we use Theorem 4 in \cite{Bar-JMSP-1996} to obtain that the cross section $N$ has to be the standard sphere from its number of linearly independent real Killing spinors, which is the same as that of the standard sphere. But in the dimension of $n=4k$ with $k\in \N$ (i.e. ${\rm dim}(N) = 4k-1$), Theorem 4 in \cite{Bar-JMSP-1996} says that the real projective space $\mathbb{R}P^{n-1}$ has also the same number of Killing spinors as the standard sphere, and so the number of Killing spinors cannot distinguish $\mathbb{R}P^{n-1}$ and $\mathbb{S}^{n-1}$. So we assume $\pi_1(N) = 0$ to rule out $\mathbb{R}P^{n-1}$ in these dimensions.
}
\end{rmrk}


\section{A positive mass theorem on AF manifolds with horn singularity} \label{horn} 
In this section, we derive a positive mass theorem for $n$-dimensional, $n\geq 3$, asymptotically flat manifolds $(M^n, g, o)$ with the so-called {\em $r^b$-horn} singularity at $o$ in \cite{Cheeger-80}. It is defined in the same way as in Definitions $\ref{defn-conic-mfld}$ and $\ref{defn-AF-conic-mfld}$, except changing the model singular metric in condition ${\rm (iii)}$ of Definition $\ref{defn-conic-mfld}$ by
\begin{equation}\label{eqn-exact-horn-metric}
\overline{g}_b := dr^2 + r^{2b} g^N, \ \ \text{for some} \ \ b > 0.
\end{equation}
Clearly, $b=1$ recovers the conical singularity. But the geometry and analysis of general horn singularity is very different from conical singularity. 
First we note that the scalar curvature of this model metric $\overline{g}_b$ is given by
\begin{equation}\label{eqn-horn-scalar-curvature}
\Scal_{\overline{g}_b} = \frac{\Scal_{g^N}}{r^{2b}} - \frac{b(n-1)(nb - 2)}{r^2}.
\end{equation}
Thus, in the case when $b>1$, once $\Scal_{g^N} > 0$, the scalar curvature $\Scal_{\overline{g}_b}$ will be nonnegative for sufficiently small $r>0$; however, for $b=1$ (conical case), to guarantee the nonnegativity of the scalar curvature of the model cone, $\Scal_{g^N}$ has to be greater than or equal to $(n-1)(n-2)$. 

Moreover, the Dirac operator on a $r^b$-horn is given by 
\begin{equation}\label{eqn-Dirac-horn}
\overline{D}^{\overline{g}_b} \varphi = \partial_r \cdot (\partial_r \varphi) + \frac{b(n-1)}{2r} \partial_r \cdot \varphi + \frac{1}{r^b} \partial_r \cdot D^{g^N} \varphi.
\end{equation}
This formula follows similarly as $(\ref{eqn-Dirac-op-relation})$ by using the mean curvature in Lemma $\ref{lem-boundary-inequality-model-horn}$ below. Note that the expression on the right hand side in $(\ref{eqn-Dirac-op-relation})$ has homogeneity for the scaling of the radial variable $r$, and this nice homogeneity property is crucial in the derivations of the weighted elliptic estimate in Proposition $\ref{prop-weight-elliptic-estimate}$ and the refined estimate in Proposition $\ref{prop-refined-weighted-elliptic-estimate}$. However, the expression on the right hand side of $(\ref{eqn-Dirac-horn})$ has no such homogeneity for $b \neq 1$. This causes some essential difficulties for extending the analysis in the conical case to the general horn singularity. As a result, Witten's approach may not be directly adapted to prove a positive mass theorem in the general horn singularity cases. 

Instead, 
by truncating off the singularity, 
we obtain a positive mass theorem for AF manifolds with $r^b$-horn singularity ($b >1$)
by applying the positive mass theorems of Herzlich on AF manifolds with boundary \cite{Herzlich-CMP-97} and \cite{Herzlich-02}.

On the other hand, as observed in \cite[Section 4.2]{Bray-Jauregui-AJM-2013} (see also \cite[Proposition 2.3]{Shi-Tam-PJM-2018}), the negative mass Schwarzschild manifold can be viewed as a AF manifold with $r^{\frac{2}{3}}$-horn singularity, whose scalar curvature is zero but its mass (at infinity) is negative. The mass of horn singularities and the mass of AF manifolds with horn singularities have also been studied in \cite{Bray-Jauregui-AJM-2013}.

\begin{center}
\begin{tikzpicture}
\draw  (-4,1.5) arc (180:270: 3 and 1.5);
\draw  (-1,0) arc (90:180: 3 and 1.5);
\draw [rotate around={180:(-2.1,-.1)}] (-2.1,-.1) arc (90:270:.05 and .1);
\draw [densely dashed] (-2.1,.1) arc (90:270:.05 and .1);
\draw [rotate around={180:(-3,-.38)}] (-3,-.38) arc (90:270:.1 and .38);
\draw [densely dashed] (-3,.38) arc (90:270:.1 and .38);
\node[above] (O) at (-1,0) {$O$};
\node[left] (M) at (-4,0) {$M$};
\node[above] (C) at (1.5,.2) {Remove a };
\node[above] (C) at (1.5,-.4) {neighborhood of};
\node[above] (C) at (1.5,-.8) {the singular point};
\draw [|<->|] (-3,-.8)--(-1,-.8);
\draw [->] (-.5,.2)--(3.5,.2);
\node [below] (g) at (-2,-.8) {$g=dr^2+r^{2b}g^{N}$};
\node [below] (T) at (-2.5,-1.5) {The $r^{2b}$-horn singularity};
\draw  (4,1.5) arc (180:270: 3 and 1.3);
\draw  (7,0) arc (90:180: 3 and 1.3);
\draw [rotate around={180:(7,0)}] (7,0) arc (90:270:.05 and .1);
\draw [densely dashed] (7,.2) arc (90:270:.05 and .1);
\draw [rotate around={180:(5.4,-.2)}] (5.4,-.2) arc (90:270:.1 and .3);
\draw [densely dashed] (5.4,.4) arc (90:270:.1 and .3);
\node [right] (M1) at (7,0.1) {$\tilde{M}$};
\draw [->] (7,.3)--(7,.5);
\node [above] at (7,0.4) {$r=r_{0}$};
\draw [->] (5.4,.5)--(5.4,.7);
\node [above] at (5.4,.6) {$N\sim S^{2}$};
\draw [|<->|] (5.4,-.5)--(7,-.5);
\node [below] at (6,-1.5) {The $r^{2b}$-horn boundary};
\node [below] at (6.3,-.6) {$g=dr^2+r^{2b}g^{N}$};
\node [below] at (1.5,-1.8) {Figure 1};
\end{tikzpicture}
\end{center}

We recall Herzlich's results for readers' convenience. For a closed Riemannian manifold $(N^n, g)$, its
Yamabe invariant 
is 
\begin{equation}
Y(N) = \inf_{g' \in [g]} \left\{ \frac{\int_{N} \Scal(g') d\vol_{g'}}{ \left( {\rm Vol}_{g'}(N) \right)^{\frac{n-2}{n}}} \right\},
\end{equation}
where $[g]$ is the conformal class of $g$.
\begin{thm}[Proposition 2.1 in \cite{Herzlich-02}]\label{thm-Herzlich}
Let $(M^n, g)$ be a $n$-dimensional asymptotically flat spin  manifold with boundary. Suppose that $\partial M$ has $k$ connected components $N_1, \cdots, N_k$ and $Y(N_i)\geq 0$, for all $1\leq i\leq k$, and the mean curvature $H_{N_i}$ of each $N_i$ satisfies 
\begin{equation}\label{eqn-boundary-inequality-Herzlich}
H_{N_i} \leq \left( {\rm Area}_g(N_i) \right)^{-\frac{1}{n-1}} \sqrt{\frac{n-1}{n-2} Y(N_i)}.
\end{equation}
Then, if the scalar curvature of $(M, g)$ is nonnegative, its mass is nonnegative. Moreover, if its mass is zero, then the manifold is flat.
\end{thm}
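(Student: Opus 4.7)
I would follow Witten's spinor strategy, adapted to accommodate the inner boundary $\partial M$. The plan is to solve $D\psi=0$ on $M$ with $\psi$ asymptotic to a prescribed constant spinor $\psi_0$ at the AF infinity, subject to an APS-type spectral boundary condition $\mathcal{B}\psi|_{\partial M}=0$. Then the Lichnerowicz--Bochner formula combined with integration by parts produces the identity
\begin{equation*}
\tfrac{1}{4}\omega_n\, m(g) \;=\; \int_M\!\Big(|\nabla\psi|^2+\tfrac{\Scal_g}{4}|\psi|^2\Big)\,d\vol_g \;+\; \mathcal{I}_{\partial M}(\psi),
\end{equation*}
where the bulk integrand is nonnegative by the scalar curvature hypothesis. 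The whole argument will then reduce to arranging $\mathcal{B}$ so that the boundary contribution satisfies $\mathcal{I}_{\partial M}(\psi)\le 0$ under the mean-curvature hypothesis \eqref{eqn-boundary-inequality-Herzlich}.

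The correct boundary condition on each component $N_i$ is the spectral projector for the boundary Dirac operator $D^{N_i}$, with the sign chosen from Clifford action of the unit normal $\nu$: require $\psi|_{N_i}$ to lie in the sum of eigenspaces of $D^{N_i}$ with eigenvalue $<\lambda_0(N_i)$, where $\lambda_0(N_i)$ is the smallest eigenvalue in absolute value. Using the hypersurface Dirac identity relating $\nabla_\nu\psi$, $D\psi$ and $D^{N_i}\psi$ (in the spirit of \eqref{eqn-Dirac-op-relation-1}) together with $D\psi=0$, one computes that $\mathcal{I}_{\partial M}(\psi)$ is controlled by $-\sum_i\int_{N_i}\!\bigl(\lambda_0(N_i)-\tfrac{1}{2}H_{N_i}\bigr)|\psi|^2$. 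The Hijazi--Friedrich conformal eigenvalue bound
\begin{equation*}
\lambda_0(N_i)\;\ge\;\tfrac{1}{2}\,\mathrm{Area}_g(N_i)^{-\frac{1}{n-1}}\sqrt{\tfrac{n-1}{n-2}\,Y(N_i)},
\end{equation*}
valid whenever $Y(N_i)\ge 0$, shows that the pointwise hypothesis \eqref{eqn-boundary-inequality-Herzlich} is exactly what is needed to force $\mathcal{I}_{\partial M}(\psi)\le 0$. This yields $m(g)\ge 0$.

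For the existence of such a harmonic spinor $\psi$ one needs Fredholm theory for $D$ on weighted Sobolev spaces adapted to the AF end, coupled with the above spectral boundary condition; surjectivity onto the natural target follows from injectivity of the formal adjoint, which in turn comes from the same Lichnerowicz--Bochner estimate once the boundary sign is established. For rigidity, $m(g)=0$ forces $\nabla\psi=0$ for every asymptotic datum $\psi_0$, producing a full $2^{[n/2]}$-dimensional space of parallel spinors on $M$; hence $(M,g)$ is Ricci-flat, and combined with asymptotic flatness this forces $(M,g)\simeq(\mathbb{R}^n,g_{\mathbb{R}^n})$, so in particular $\partial M=\emptyset$. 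The main technical obstacle I expect is verifying that the APS boundary condition produces a genuinely Fredholm (and surjective) problem in the weighted Sobolev setting while simultaneously giving the correct boundary sign, together with invoking the Hijazi--Friedrich bound in a form compatible with the prescribed boundary data via a conformal rescaling.
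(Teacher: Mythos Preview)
The paper does not prove this theorem. Theorem~\ref{thm-Herzlich} is quoted verbatim from Herzlich's work (Proposition~2.1 in \cite{Herzlich-02}, with the $3$-dimensional version in \cite{Herzlich-CMP-97}) and is recalled ``for readers' convenience'' as a black box; it is then applied directly to the truncated manifold $\widetilde{M}=M\setminus((0,r_0)\times N)$ to obtain the corollaries about horn singularities. There is therefore no proof in the paper to compare your proposal against.

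That said, your sketch is a reasonable outline of Herzlich's original argument: Witten's harmonic spinor method on an AF manifold with boundary, a spectral (APS-type) boundary condition chosen so that the boundary term in the integrated Lichnerowicz formula has a definite sign, and the Hijazi inequality to convert the mean-curvature hypothesis into the required eigenvalue bound $\lambda_0(N_i)\ge \tfrac{1}{2}H_{N_i}$. One correction on the rigidity clause: the conclusion in Theorem~\ref{thm-Herzlich} is only that $(M,g)$ is \emph{flat}, not that it is isometric to $(\mathbb{R}^n,g_{\mathbb{R}^n})$ with $\partial M=\emptyset$; since $M$ has a nonempty inner boundary by hypothesis, your stronger conclusion would be false as stated.
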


In the 3-dimensional case, the boundary is a 2-dimensional manifold. If the Yamabe invariant $Y(N_i)>0$, then the Gauss-Bonnet formula implies that $N_i$ is homeomorphic to $2$-sphere, and the condition $(\ref{eqn-boundary-inequality-Herzlich})$ becomes
\begin{equation}\label{eqn-boundary-inequality-Herzlich-3D}
H_{N_i} \leq 4 \sqrt{\frac{\pi}{{\rm Area}(N_i)}},
\end{equation}
as in \cite[Proposition 2.1]{Herzlich-CMP-97}.


\subsection{The 3-dimensional case}\label{subsect-3D-horn} 
In this subsection, we show that Herzlich's result implies  the following positive mass theorem for asymptotically flat 3-dimensional manifold with isolated $r^b$-horn singularity. For the simplicity of notation, we only consider the case of single horn singularity.
\begin{cor}\label{cor-PMT-horn}
Let $(M^3, g, o)$ be an AF spin manifold with a $r^b$-horn singularity at $o$. Assume $b>1$, and the cross section of a horn neighborhood of the singular point is diffeomorphic to the $2$-sphere. If the scalar curvature of $g$ is nonnegative, then the mass (at infinity) as in Definition $\ref{def-mass}$ is strictly positive.
\end{cor}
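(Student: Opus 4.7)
The plan is to truncate $M$ near the horn singularity and apply Herzlich's positive mass theorem with boundary (Theorem \ref{thm-Herzlich}) to the resulting smooth AF spin manifold with boundary. For small $r_0 > 0$, let $\tilde M := M \setminus B_{r_0}(o)$, whose boundary is the cross section $N_{r_0} = \{r = r_0\}$, diffeomorphic to $N \cong S^2$. The remaining work is to verify Herzlich's two hypotheses on $N_{r_0}$ for small $r_0$, and then to upgrade nonnegativity to strict positivity by exploiting strictness of the boundary inequality when $b > 1$.

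Since $N \cong S^2$, the Yamabe invariant satisfies $Y(N) = 8\pi > 0$, so Herzlich's positivity requirement on $Y(N)$ is automatic. For the mean curvature inequality $(\ref{eqn-boundary-inequality-Herzlich-3D})$, a Koszul computation on the model horn $\overline{g}_b = dr^2 + r^{2b}g^N$ (in the spirit of $(\ref{eqn-mean-curvature-cone})$ and Lemma \ref{lem-boundary-inequality-model-horn}) gives shape operator $\frac{b}{r_0}\mathrm{Id}$ on $\{r = r_0\}$, so
\[
H_{\overline{g}_b}(N_{r_0}) = \frac{2b}{r_0}, \qquad \mathrm{Area}_{\overline{g}_b}(N_{r_0}) = r_0^{2b}\, \mathrm{Area}_{g^N}(N),
\]
and hence
\[
\frac{H_{\overline{g}_b}(N_{r_0})}{4\sqrt{\pi/\mathrm{Area}_{\overline{g}_b}(N_{r_0})}} = \frac{b\sqrt{\mathrm{Area}_{g^N}(N)}}{2\sqrt{\pi}}\, r_0^{b-1} \longrightarrow 0 \quad \text{as } r_0 \to 0,
\]
because $b > 1$. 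The perturbation bounds $|h|_{\overline{g}_b} = O(r^\alpha)$ and $|\overline{\nabla} h|_{\overline{g}_b} = O(r^{\alpha-1})$ produce only multiplicative $1 + O(r_0^\alpha)$ corrections to the mean curvature and area of $N_{r_0}$ with respect to $g$, so $(\ref{eqn-boundary-inequality-Herzlich-3D})$ holds strictly for $g$ once $r_0$ is small enough.

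Theorem \ref{thm-Herzlich} then gives $m(g) = m(\tilde M) \geq 0$. For strict positivity, suppose toward contradiction that $m(g) = 0$. Herzlich's rigidity (in the form proved in \cite{Herzlich-CMP-97}) forces $\tilde M$ to be isometric to $\R^3 \setminus B_R(0)$ with boundary the round sphere of radius $R$, for which the inequality in $(\ref{eqn-boundary-inequality-Herzlich-3D})$ is saturated. Equivalently, using only flatness, $\tilde M$ sits in $\R^3$ with boundary a closed surface $\Sigma \cong S^2$, and the pointwise bound $H \leq 4\sqrt{\pi/\mathrm{Area}(\Sigma)}$ gives $\int_\Sigma H^2\, dA \leq 16\pi$; combined with the Willmore inequality $\int_\Sigma H^2\, dA \geq 16\pi$, this forces equality everywhere, so $\Sigma$ is a round sphere and the boundary inequality is saturated. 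Either route contradicts the strict inequality established for small $r_0$. The only delicate step in the plan is ensuring that the perturbation $h$ does not spoil the strictness, but the factor $r_0^{b-1} \to 0$ dominates the $r_0^\alpha$ correction whenever $b > 1$.
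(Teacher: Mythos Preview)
Your proof follows the same route as the paper: truncate near the singularity to obtain $\tilde M$, verify Herzlich's boundary inequality $(\ref{eqn-boundary-inequality-Herzlich-3D})$ on $N_{r_0}$ for small $r_0$ via the model-horn computation $H_{\bar g_b}=2b/r_0$, ${\rm Area}_{\bar g_b}=r_0^{2b}\,{\rm Area}_{g^N}(N)$ together with the $O(r_0^\alpha)$ perturbation estimates, then apply Theorem~\ref{thm-Herzlich}. This part is essentially identical to the paper's Lemmas~\ref{lem-boundary-inequality-model-horn} and the subsequent lemma.

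The only genuine difference is the argument for \emph{strict} positivity. The paper simply notes that Herzlich's rigidity would make $(\tilde M,g)$ flat, while the horn metric $g=\bar g_b+h$ cannot be flat near the tip for $b>1$: the radial sectional curvature of $\bar g_b$ behaves like $-b(b-1)r^{-2}$, and the perturbation $h$ only contributes $O(r^{\alpha-2})$ corrections, so the curvature of $g$ is nonzero for small $r$. You instead argue that flatness forces saturation of the boundary inequality (either by citing the sharp form of Herzlich's rigidity, or by embedding $\tilde M$ in $\R^3$ and invoking the Willmore inequality $\int_{\Sigma}H^2\,dA\ge 16\pi$), contradicting the strict inequality already established. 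Both arguments are valid. The paper's curvature route is more self-contained and avoids the small technical step of justifying an isometric embedding of the flat $\tilde M$ into $\R^3$; your Willmore route avoids computing the curvature of the perturbed horn and makes transparent exactly how the strict inequality in the boundary condition is what rules out the equality case. One caution: your first sentence on rigidity asserts that \cite{Herzlich-CMP-97} directly yields $\tilde M\cong \R^3\setminus B_R(0)$ with round boundary, which is stronger than the ``flat'' conclusion quoted in Theorem~\ref{thm-Herzlich}; your Willmore alternative is the safer justification.
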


Corollary $\ref{cor-PMT-horn}$ is a consequence of Theorem $\ref{thm-Herzlich}$. We 
put $\widetilde{M} = M \setminus \left( (0, r_0)\times N \right)$ for some $1 > r_0 >0$. Then $(\widetilde{M}, g|_{\widetilde{M}})$ is an AF smooth Riemannian 3-dimensional manifold with boundary $\partial \widetilde{M} = \{r_0\} \times N =: N_{r_0}$, and nonnegative scalar curvature. Then it suffices to derive the inequality in $(\ref{eqn-boundary-inequality-Herzlich})$ on $N_{r_0} := \{r_0\} \times N$, for sufficiently small $r_0 >0$, with respect to the asymptotic horn metric 
\begin{equation}\label{eqn-asymptotic-horn-metric}
g = \overline{g}_b + h,
\end{equation}
where $h$ satisfies
\begin{equation}\label{eqn-asymptotic-horn-metric-control}
|\on^k h|_{\overline{g}_b} = O(r^{\alpha - k}), \ \ \text{as} \ \ r \to 0,
\end{equation}
for some $\alpha >0$ and $k = 0, 1, 2$, where $\on$ is the Levi-Civita connection of $\overline{g}_b$. Straightforward calculations on model horn $((0, 1) \times N, \overline{g}_b)$ give the following lemma.
\begin{lem}\label{lem-boundary-inequality-model-horn}
Assume $b>1$. The area ${\rm Area}(N_r, \overline{g}_b)$ of $N_r = \{r\} \times N$ with respect to the metric induced by $\overline{g}_b$ is given by
\begin{equation}
{\rm Area}(N_r, \overline{g}_b) = r^{2b} {\rm Area}(N, g^N),
\end{equation}
where ${\rm Area}(N, g^N)$ is the area of $N$ with respect to the Riemannian metric $g^N$.
The mean curvature $H_{N_r, \overline{g}_b}$ of $N_r$ with respect to $\overline{g}_b$ and the unit normal vector field $\partial_r$ is then given by
\begin{equation}
H_{N_r, \overline{g}_b} = \frac{\frac{d}{dr} {\rm Area}(N_r, \overline{g}_b)}{{\rm Area}(N_r, \overline{g})} = \frac{2b}{r}.
\end{equation}
Then for any sufficiently small $r >0$ we have
\begin{equation}
4 \sqrt{\frac{\pi}{{\rm Area}(N_{r}, \overline{g}_b)}} = 4 \sqrt{\frac{\pi}{r^{2b} {\rm Area}(N, g^N)}} = 4 \sqrt{\frac{\pi}{{\rm Area}(N, g^N)}} \frac{1}{r^b} > \frac{2b}{r} = H_{N_{r}, \overline{g}_b}.
\end{equation}
\end{lem}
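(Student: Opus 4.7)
The approach is a direct computation exploiting the warped product structure $\overline{g}_b = dr^2 + r^{2b} g^N$ on the model horn, followed by comparing asymptotic orders in $r$.

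For the area, I would note that the induced metric on the level set $N_r = \{r\} \times N$ is precisely $r^{2b} g^N$. Since $N$ is $2$-dimensional in our setting, the volume form on $N_r$ is $r^{2b}\, d\vol_{g^N}$, and integrating gives $\mathrm{Area}(N_r,\overline{g}_b) = r^{2b}\,\mathrm{Area}(N,g^N)$. For the mean curvature, the cleanest route is to use the well-known identity $H_{N_r,\overline{g}_b} = \tfrac{d}{dr}\log\mathrm{Area}(N_r,\overline{g}_b)$ for a warped product (which comes from differentiating the volume element, or equivalently from the shape operator $A X = \tfrac{b}{r}X$ for $X$ tangent to $N_r$, obtained just as in \eqref{eqn-shape-operator-cone} but with the warping function $f(r) = r^b$). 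This immediately yields $H_{N_r,\overline{g}_b} = 2b/r$.

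For the comparison inequality, I would substitute the two formulas to rewrite
\[
4\sqrt{\frac{\pi}{\mathrm{Area}(N_r,\overline{g}_b)}} = \frac{4\sqrt{\pi/\mathrm{Area}(N,g^N)}}{r^b}, \qquad H_{N_r,\overline{g}_b} = \frac{2b}{r},
\]
so the desired inequality reduces to $r^{1-b} > \tfrac{b}{2}\sqrt{\mathrm{Area}(N,g^N)/\pi}$. Since $b > 1$, the left-hand side blows up as $r \to 0$, so the inequality holds for all sufficiently small $r>0$.

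There is no serious obstacle here; the lemma is essentially a bookkeeping computation on the model horn. The only minor point to be careful about is the sign/normalization convention for the mean curvature (the outward normal is $\partial_r$ pointing away from the singular tip), which matches the convention already used for cones in \eqref{eqn-mean-curvature-cone}. The hypothesis $b>1$ enters exclusively at the last step, where it guarantees $1-b < 0$ and hence the strict inequality for small $r$; in the conical case $b=1$ one only gets equality, reflecting the sharpness of Herzlich's boundary condition on a round sphere.
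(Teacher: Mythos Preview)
Your proposal is correct and is essentially the same direct computation the paper has in mind; the paper itself gives no detailed proof, only the remark that ``straightforward calculations on the model horn'' yield the lemma. Your only slightly imprecise statement is the closing aside that the conical case $b=1$ gives equality: with $b=1$ both sides are constant multiples of $1/r$, and one gets equality only when $\mathrm{Area}(N,g^N)=4\pi$, i.e., for the round sphere cross section---but this does not affect the proof.
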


Lemma $\ref{lem-boundary-inequality-model-horn}$ and the asymptotic control in (\ref{eqn-asymptotic-horn-metric-control}) imply the following lemma.
\begin{lem}
Assume $b>1$. For a sufficiently small $r_0>0$, we have
\begin{equation}
4 \sqrt{\frac{\pi}{{\rm Area}(N_{r_0}, g)}} \geq H_{N_{r_0}, g}.
\end{equation}
Here $H_{N_r, g}$ denotes the mean curvature of $N_r := \{r\}\times N$ with respective to the metric $g$ restricted on $(0, 1) \times N$ and unit normal vector field pointing to $\{1\} \times N$.
\end{lem}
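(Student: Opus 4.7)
The plan is a perturbation argument off Lemma \ref{lem-boundary-inequality-model-horn}. The model-horn strict inequality has a large leading gap of order $r_0^{-b}$ as $r_0 \to 0$ (since $b>1$ forces $r_0^{-b} \gg r_0^{-1}$), so it suffices to show that turning on the perturbation $h$ with $|h|_{\overline{g}_b} = O(r^\alpha)$ and $|\on h|_{\overline{g}_b} = O(r^{\alpha-1})$ alters each side only by genuinely lower-order terms.

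First I would handle the area. The tangential components of $h$ on $N_{r_0}$ satisfy $|h_{ij}|_{r_0^{2b} g^N} = O(r_0^\alpha)$, so expanding $\sqrt{\det(g|_{N_{r_0}})}$ about $\overline{g}_b|_{N_{r_0}} = r_0^{2b} g^N$ and integrating gives
\[
{\rm Area}(N_{r_0}, g) \;=\; r_0^{2b}\,{\rm Area}(N, g^N)\bigl(1 + O(r_0^\alpha)\bigr),
\]
and consequently
\[
4\sqrt{\frac{\pi}{{\rm Area}(N_{r_0}, g)}} \;=\; \frac{4\sqrt{\pi}}{\sqrt{{\rm Area}(N, g^N)}}\, r_0^{-b}\bigl(1 + O(r_0^\alpha)\bigr).
\]

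Next I would estimate $H_{N_{r_0}, g}$. Working in a $\overline{g}_b$-orthonormal frame (so that the components of $h$ and $\on h$ are directly controlled by the hypotheses without extra $r$-factors), the Christoffel symbols of $g$ differ from those of $\overline{g}_b$ by terms of order $r^{\alpha-1}$. Since the unit normal and the shape operator are built from $g$ and its first derivatives, a routine expansion around the model value $2b/r_0$ produced in Lemma \ref{lem-boundary-inequality-model-horn} yields
\[
H_{N_{r_0}, g} \;=\; \frac{2b}{r_0} + O(r_0^{\alpha - 1}).
\]

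Combining these two expansions, the left side of the desired inequality is $\sim C\, r_0^{-b}$ while the right side is $\sim 2b\, r_0^{-1}$, and the respective error terms are of sizes $r_0^{\alpha - b}$ and $r_0^{\alpha - 1}$. Because $b>1$, the term $r_0^{-b}$ dominates both $r_0^{-1}$ and all error contributions as $r_0 \to 0$, which gives the strict inequality for $r_0$ sufficiently small. The only mildly delicate step is the mean-curvature perturbation; the key point there is that working in the $\overline{g}_b$-orthonormal frame turns the hypothesis $|\on h|_{\overline{g}_b} = O(r^{\alpha-1})$ into a direct bound on the Christoffel correction, avoiding spurious singular factors that would otherwise appear in a coordinate frame scaled like $r^b$ in the $N$-directions.
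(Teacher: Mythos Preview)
Your proposal is correct and follows essentially the same approach as the paper: derive the expansions ${\rm Area}(N_{r_0},g)=r_0^{2b}\,{\rm Area}(N,g^N)(1+O(r_0^{\alpha}))$ and $H_{N_{r_0},g}=\tfrac{2b}{r_0}+O(r_0^{\alpha-1})$ from the asymptotic control on $h$, then use $b>1$ so that the $r_0^{-b}$ term on the left dominates the $r_0^{-1}$ term on the right. The paper is slightly terser (it passes through crude bounds $H\le 4b/r_0$ and ${\rm Area}\le 2r_0^{2b}{\rm Area}(N,g^N)$ before concluding), but the substance is identical.
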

\begin{proof}
By the asymptotic control hypothesis (\ref{eqn-asymptotic-horn-metric-control}) of the metric $g$, and the formulas of ${\rm Area}(N_{r}, \overline{g}_b)$ and $H_{N_{r}, \overline{g}_b}$ in Lemma \ref{lem-boundary-inequality-model-horn}, we have
 $(\ref{eqn-asymptotic-horn-metric-control})$ of the metric $g$ implies
\begin{equation*}
H_{N_r, g} = H_{N_r, \overline{g}_b} + O(r^{\alpha-1}) = \frac{2b}{r} + O(r^{\alpha -1}), \ \ \text{as} \ \ r \to 0,
\end{equation*}
and 
\begin{equation*}
{\rm Area}(N_r, g) = {\rm Area}(N_r, \overline{g}_b)  + O(r^{2b + \alpha}) = r^{2b} {\rm Area}(N, g^N) + O(r^{2b + \alpha}), \ \ \text{as} \ \ r \to 0.
\end{equation*}
Consequently, because $\alpha >0$, we obtain that for any sufficiently small $r >0$,
\begin{equation*}
H_{N_{r}, g} \leq \frac{4b}{r}, \ \ \text{and} \ \ {\rm Area}(N_{r}, g) \leq 2 r^{2b}{\rm Area}(N, g^N).
\end{equation*}
Therefore, for a sufficiently small $r_0 >0$, we have
\begin{equation*}
  4 \sqrt{\frac{\pi}{{\rm Area}(N_{r_0}, g)}} \geq 4 \sqrt{\frac{\pi}{2r^{2b}_0 {\rm Area{N, g^N}}}} 
  = \frac{4}{r^b_0} \sqrt{\frac{\pi}{2{\rm Area}(N, g^N)}} 
  > \frac{4b}{r_0} 
  \geq H_{N_{r_0}, g}.
\end{equation*}

\end{proof}
Thus 
the AF manifold with boundary $\left( \widetilde{M}, g|_{\widetilde{M}} \right)$ satisfies the boundary inequality $(\ref{eqn-boundary-inequality-Herzlich})$. As a result, Theorem $\ref{thm-Herzlich}$ implies that the mass of $\left( \widetilde{M}, g|_{\widetilde{M}} \right)$ is nonnegative. Then strict positivity of the mass follows from the rigidity result in Theorem $\ref{thm-Herzlich}$, since the asymptotic horn metric cannot be flat for $b>1$. Finally, because the mass (at infinity) of the original AF manifold $(M^3, g, o)$ with a single horn singularity is the same as the mass of $\left( \widetilde{M}, g|_{\widetilde{M}} \right)$, Corollary $\ref{cor-PMT-horn}$ then follows.


\subsection{Higher dimensional case}
In this subsection, we show that Herzlich's result, as stated in Theorem \ref{thm-Herzlich}, implies positive mass theorems for asymptotically flat manifolds with a single horn singularity for dimension $n>3$. For the simplicity of notation, we still consider the case of single horn singularity.

Let $(M^n, g, o)$ be a $n$-dimensional AF manifold with an {\em exact} horn singularity at $o$, that is, near the singular point $o$ the metric $g = \bar{g}_b$. As in \S\ref{subsect-3D-horn}, for a small $r_0 >0$, let $\widetilde{M} = M \setminus \left( (0, r_0) \times N \right)$, and consider the AF manifold $\left( \widetilde{M}, g|_{\widetilde{M}} \right)$ with boundary $\partial \widetilde{M} = \{r_0\} \times N =: N_{r_0}$. Note that by the scalar curvature formula in $(\ref{eqn-horn-scalar-curvature})$, if $\Scal_{\bar{g}_b} \geq 0$, then $\Scal_{g^N}>0$, and so the Yamabe invariant $Y(N, g^N) > 0$. Thus, we obtain that
\begin{eqnarray}
    \left( {\rm Area}_g(\partial \widetilde{M}) \right)^{-\frac{1}{n-1}} \sqrt{\frac{n-1}{n-2}Y(\partial \widetilde{M})} 
    & = & \frac{1}{r^b_0} \left( {\rm Area}_{g^N}(N) \right)^{-\frac{1}{n-1}} \sqrt{\frac{n-1}{n-2} Y(N, g^N)} \nonumber \\
    & > & \frac{(n-1)b}{r_0} \label{eqn-horn-Herzlich-condition} \\
    & = &  H_{\partial \widetilde{M}, g}, \nonumber
\end{eqnarray}
holds for sufficiently small $r_0 >0$, since $Y(N, g^N)>0$ and $b>1$. Hence Theorem \ref{thm-Herzlich} implies:
\begin{cor}\label{cor-PMT-exact-horn}
Let $(M^n, g, o)$, $n\geq 3$, be a AF spin manifold with an exact horn singularity at $o$. If the scalar curvature $\Scal_g \geq 0$, then the mass of $(M, g)$ is strictly positive.
\end{cor}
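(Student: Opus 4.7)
The plan is to truncate off the horn singularity and reduce to Herzlich's Theorem~\ref{thm-Herzlich}. First I would choose $r_0 > 0$ small enough that $g = \overline{g}_b$ on $(0, 2r_0) \times N$, which is possible by the exact-horn hypothesis. Set $\widetilde M := M \setminus ((0, r_0) \times N)$, so that $(\widetilde M, g|_{\widetilde M})$ is a smooth AF spin manifold with nonnegative scalar curvature and single boundary component $N_{r_0} = \{r_0\} \times N$. Since truncation away from infinity does not affect the ADM mass, $m(g) = m(g|_{\widetilde M})$, and it will suffice to prove strict positivity of the latter via Theorem~\ref{thm-Herzlich}.

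The two hypotheses to check are (a) $Y(N_{r_0}) \geq 0$, and (b) the mean curvature inequality (\ref{eqn-boundary-inequality-Herzlich}). For (a), I would rewrite the scalar curvature formula (\ref{eqn-horn-scalar-curvature}) as
\begin{equation*}
\Scal_{g^N} \;\geq\; b(n-1)(nb-2)\, r^{2b-2},
\end{equation*}
which must hold for every $r$ in the exact horn region. Because $b > 1$ and $n \geq 3$ force $nb - 2 > 0$, evaluating at any fixed $r_1 > 0$ in that region gives a strictly positive pointwise lower bound on $\Scal_{g^N}$; taking $g^N$ itself as a test metric then shows $Y(N, g^N) > 0$, and conformal invariance of $Y$ under constant rescaling yields $Y(N_{r_0}) = Y(N, g^N) > 0$.

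For (b), a direct warped-product computation in the spirit of Lemma~\ref{lem-boundary-inequality-model-horn} gives
\begin{equation*}
H_{N_{r_0}, g} = \frac{b(n-1)}{r_0}, \qquad \mathrm{Area}_g(N_{r_0}) = r_0^{b(n-1)} \, \mathrm{Area}_{g^N}(N),
\end{equation*}
so the right-hand side of (\ref{eqn-boundary-inequality-Herzlich}) equals $r_0^{-b} \cdot C$ for a fixed constant $C > 0$ depending only on $(N, g^N)$ and $n$. The required inequality reduces to $C > b(n-1)\, r_0^{b-1}$, which is automatic once $r_0$ is taken small enough, since $b - 1 > 0$. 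This is exactly the chain of inequalities recorded in (\ref{eqn-horn-Herzlich-condition}).

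Theorem~\ref{thm-Herzlich} then gives $m(g|_{\widetilde M}) \geq 0$, hence $m(g) \geq 0$. For strict positivity I would invoke the rigidity clause of Theorem~\ref{thm-Herzlich}: if $m(g) = 0$, then $(\widetilde M, g|_{\widetilde M})$ would be flat, but a collar neighborhood of $\partial \widetilde M$ is isometric to a piece of $((r_0, 2r_0) \times N, \overline{g}_b)$, and the radial sectional curvature of $\overline{g}_b$ equals $-b(b-1)/r^2$, which is nonzero for $b > 1$. This contradiction forces $m(g) > 0$. The only mildly subtle step is (a): it needs the \emph{exact}-horn hypothesis, not merely asymptotic horn control, which is precisely the distinction between Corollary~\ref{cor-PMT-exact-horn} and Theorem~\ref{thm-PMT-horns-intro} pointed out in the remark after the latter.
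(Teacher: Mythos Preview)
Your proof is correct and follows essentially the same route as the paper: truncate to $\widetilde M$, deduce $\Scal_{g^N}>0$ from the exact-horn scalar curvature formula, verify Herzlich's boundary inequality (\ref{eqn-boundary-inequality-Herzlich}) via the warped-product computations, and use the rigidity clause of Theorem~\ref{thm-Herzlich} to upgrade to strict positivity (your explicit radial sectional curvature $-b(b-1)/r^2$ is a nice way to see non-flatness). One phrasing to fix: ``taking $g^N$ itself as a test metric'' gives only an \emph{upper} bound on the infimum defining $Y(N,g^N)$, not a lower one; the correct justification is the standard trichotomy that $\Scal_{g^N}>0$ everywhere forces the first eigenvalue of the conformal Laplacian to be positive, hence $Y(N,[g^N])>0$ (and for $\dim N=2$ this follows from Gauss--Bonnet).
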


Now let $(M^n, g, o)$ be an AF manifold with a general horn singularity at $o$, that is, near the singular point $o$ the metric $g = \bar{g}_b + h$, where $h$ satisfies $(\ref{eqn-asymptotic-horn-metric-control})$. In this case, due to the presence of the perturbation term $h$ in the metric $g$, the scalar curvature of $g$ near the singular point $o$ may include some terms with orders between $\frac{1}{r^2}$ and $\frac{1}{r^{2b}}$, besides the scalar curvature of $\bar{g}_b$ as in $(\ref{eqn-horn-scalar-curvature})$. As a result, unlike the exact horn singularity case, $\Scal_g \geq 0$ only implies $\Scal_{g^N} \geq 0$, and so the Yamabe invariant $Y(N, g^N) \geq 0$. Thus, to drive a positive mass theorem, we assume $Y(N, g^N) > 0$. Then the inequality $(\ref{eqn-horn-Herzlich-condition})$ holds for sufficiently small $r_0$, and so, as in \S\ref{subsect-3D-horn}, by using the asymptotic control $(\ref{eqn-asymptotic-horn-metric-control})$, the boundary of $(\widetilde{M}, g|_{\widetilde{M}})$ satisfies the condition in $(\ref{eqn-boundary-inequality-Herzlich})$. Consequently, Theorem \ref{thm-Herzlich} implies  the positive mass theorem as following.
\begin{cor}
Let $(M^n, g, o)$, $n\geq 3$, be a AF manifold with a horn singularity at $o$. If the scalar curvature $\Scal_g \geq 0$, and the Yamabe invariant $Y(N, g^N) > 0$, then the mass of $(M, g)$ is strictly positive.
\end{cor}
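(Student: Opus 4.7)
The plan is to follow the truncation strategy already deployed for Corollary~\ref{cor-PMT-exact-horn} and in Section~\ref{subsect-3D-horn}, reducing the claim to Herzlich's boundary positive mass theorem~\ref{thm-Herzlich}. For a small parameter $r_0 > 0$, set $\widetilde M := M \setminus ((0, r_0) \times N)$, a smooth AF spin manifold with one compact boundary component $N_{r_0} := \{r_0\} \times N$ inheriting $\Scal_g \geq 0$. The ADM mass depends only on the asymptotic end, so $m(g) = m(g|_{\widetilde M})$, and it suffices to verify the boundary hypothesis~(\ref{eqn-boundary-inequality-Herzlich}) and then exclude the rigidity case.

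The core step is the boundary inequality. A Koszul-formula computation extending Lemma~\ref{lem-boundary-inequality-model-horn} to arbitrary dimension gives, for the model horn $\overline g_b = dr^2 + r^{2b} g^N$, the shape operator $A = (b/r)\,\mathrm{id}$ on $N_r$ and hence
\[
H_{N_r, \overline g_b} = \frac{(n-1)b}{r}, \qquad \mathrm{Area}_{\overline g_b}(N_r) = r^{b(n-1)}\,\mathrm{Area}_{g^N}(N).
\]
Since the Yamabe invariant is conformally (hence scale) invariant, $Y(N, r^{2b} g^N) = Y(N, g^N) > 0$, so for the model the left-hand side of~(\ref{eqn-boundary-inequality-Herzlich}) is of order $r_0^{-b}$ and strictly dominates the mean curvature $\sim r_0^{-1}$ because $b > 1$, which is precisely~(\ref{eqn-horn-Herzlich-condition}). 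For the actual metric $g = \overline g_b + h$ with $|\on^k h|_{\overline g_b} = O(r^{\alpha - k})$, the asymptotic control yields
\[
H_{N_{r_0}, g} = \frac{(n-1)b}{r_0} + O(r_0^{\alpha - 1}), \qquad \mathrm{Area}_g(N_{r_0}) = r_0^{b(n-1)}\,\mathrm{Area}_{g^N}(N)\bigl(1 + O(r_0^\alpha)\bigr),
\]
and continuity of the Yamabe constant in $C^2$-topology (applied with the hypothesis $Y(N, g^N) > 0$) gives $Y(N_{r_0}, g|_{N_{r_0}}) \to Y(N, g^N)$ as $r_0 \to 0$. All corrections are of lower order in $r_0$, so~(\ref{eqn-boundary-inequality-Herzlich}) holds strictly once $r_0$ is small enough, and Theorem~\ref{thm-Herzlich} delivers $m(g) \geq 0$.

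To upgrade nonnegativity to strict positivity, I would use the rigidity statement in Theorem~\ref{thm-Herzlich}: $m(g) = 0$ would force $(\widetilde M, g|_{\widetilde M})$ to be flat, which I rule out by a warped-product curvature computation. For $\overline g_b$ one obtains the radial sectional curvature $K(\partial_r, e_i) = -b(b-1)/r^2$, nonzero for every $r > 0$ when $b > 1$, and the perturbation $h$ contributes only corrections of size $O(r^{\alpha - 2})$ to the Riemann tensor. Choosing $r_0$ so small that $g$ has nonzero curvature in a neighborhood of $N_{r_0}$ contradicts flatness, hence $m(g) > 0$. The main obstacle is the Yamabe-constant perturbation step: since $Y$ is a nonlinear conformal invariant defined by a minimax, transferring smallness of $h$ to smallness of $|Y(N_{r_0}, g|_{N_{r_0}}) - Y(N, g^N)|$ requires invoking continuity of the first eigenvalue of the conformal Laplacian under $C^2$-perturbations of the metric, which is standard but must be invoked explicitly; once this is in hand, everything else reduces to direct substitution into Herzlich's criterion.
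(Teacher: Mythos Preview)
Your proposal is correct and follows essentially the same approach as the paper: truncate at $N_{r_0}$, verify Herzlich's boundary inequality~(\ref{eqn-boundary-inequality-Herzlich}) using the model computations plus the asymptotic control~(\ref{eqn-asymptotic-horn-metric-control}), apply Theorem~\ref{thm-Herzlich}, and exclude the rigidity case via the non-flatness of the horn metric for $b>1$. You are in fact more explicit than the paper on two points---the continuity of the Yamabe invariant under the perturbation $h$ (which the paper simply absorbs into ``as in \S\ref{subsect-3D-horn}'', even though in dimension three the Gauss--Bonnet formula made this step trivial) and the radial sectional curvature computation ruling out flatness---but these are elaborations of the same argument rather than a different route.
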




\bibliographystyle{plain}
\bibliography{DSW_PMT.bib}

\end{document}